\documentclass[11pt, letterpaper]{amsart}
\usepackage{fancyhdr}
\usepackage{txfonts}
\usepackage{comment}
\usepackage{amsmath,amsthm,amssymb,hyperref, mdframed}
\usepackage{mathrsfs} 
\usepackage{tikz}
\usepackage[all]{xy}
\usetikzlibrary{decorations.markings}
\usetikzlibrary{backgrounds,shapes}
\usetikzlibrary{patterns,hobby}
        \usepackage{pgfplots}
        \pgfplotsset{compat=1.6}

\usepackage{subfig}

\usepackage{amsfonts,dsfont,euscript,enumerate,verbatim,calc,mathtools}

\usepackage{color}
\usepackage{hyperref}
\hypersetup{
    colorlinks=true, %set true if you want colored links
    linktoc=all,     %set to all if you want both sections and subsections linked
    linkcolor=blue,  %choose some color if you want links to stand out
    citecolor=blue,
    filecolor=blue,
    urlcolor=blue
}

\setcounter{secnumdepth}{3}
\setcounter{tocdepth}{2}

\definecolor{aliceblue}{rgb}{0.9, 0.95, 1.0}
\definecolor{pallido}{RGB}{221,227,227}

\usepackage{geometry} \geometry{a4paper,top=3cm,bottom=4cm,left=3cm,right=3cm,heightrounded,bindingoffset=0mm}

\newcommand{\HH}{{\mathbb H}}

\newcommand{\pslc}{{\mathrm{PSL}_2 (\mathbb{C})}}
\newcommand{\pslnc}{{\mathrm{PSL}_n (\mathbb{C})}}

\newcommand{\pslr}{{\mathrm{PSL}_2 (\mathbb{R})}}

\newcommand{\cp}{\mathbb{C}\mathrm{P}^1}
\newcommand{\sm}{(\mathbb{S}, \mathbb{M})}

\theoremstyle{plain}                    
\newtheorem{thm}{Theorem}[section]

\newtheorem{lem}[thm]{Lemma}

\newtheorem{defn}[thm]{Definition}

\newtheorem{prop}[thm]{Proposition}
\newtheorem{cor}[thm]{Corollary}

\theoremstyle{definition}

\newtheorem*{conj2}{Conjecture}

\theoremstyle{remark}

%\newtheorem{theorem}{Theorem}[section]
%\newtheorem{prop}[theorem]{Proposition}
%\newtheorem{lemma}[theorem]{Lemma}
%\newtheorem{cor}[theorem]{Corollary}

%\newtheorem{thm}{Theorem}[section]

%\newtheorem{ques}[thm]{Question}
%\newtheorem{lem}[thm]{Lemma}
%\theoremstyle{definition}
%\newtheorem{defn}[thm]{Definition}
%\newtheorem{definition}[thm]{Definition}

% big chi

% \pagecolor[rgb]{0,0,0}%black 
% \color[rgb]{0.9,0.9,0.9} %light grey

\title{Harmonic maps and framed $\pslc$-representations}

\author{Subhojoy Gupta}
\address{Department of Mathematics, Indian Institute of Science, Bangalore, India}
\email{subhojoy@iisc.ac.in}

\author{Gobinda Sau} 
\address{Stat-Math Unit, Indian Statistical Institute, Bangalore, India}
\email{gobindasau@isibang.ac.in}

%\subjclass[2020]{57M50}

%\date{\today}

\begin{document}

\begin{abstract}
%For a marked and bordered surface $\sm$ with interior marked points $\mathbb{P} \subset \mathbb{M}$, a framed $\pslc$-representation comprises a representation $\rho: \pi_1(\mathbb{S} \setminus \mathbb{P}) \to \pslc$ and a $\rho$-equivariant framing $\beta:F_\infty \to \cp$ defined on the lift of the set of marked points $\mathbb{M}$  to the ideal boundary of the universal cover. 
We show that given an element $X$ of the enhanced Teichm\"{u}ller space $\mathcal{T}^\pm\sm$ and a type-preserving framed $\pslc$-representation $\hat{\rho} = (\rho,\beta)$, there is a $\rho$-equivariant harmonic map $f:\mathbb{H}^2 \to \mathbb{H}^3$ that is asymptotic to the framing $\beta$. Here, the domain is the universal cover of the punctured Riemann surface obtained from a conformal completion of $X$.  Moreover, such a harmonic map is unique if one prescribes, in addition, the principal part of the Hopf differential at each puncture. The proof uses the harmonic map heat flow.
 \end{abstract}

\maketitle
%\tableofcontents

\section{Introduction}

The existence of equivariant harmonic maps from the universal cover of a Riemann surface to a symmetric space  forms is a crucial step in the proof of the celebrated non-abelian Hodge correspondence (see, for example, the discussion in \cite[\S 1]{Li-Survey}). In particular, for surface-group representations into $\pslc$, Donaldson proved the following result concerning equivariant harmonic maps into the corresponding symmetric space, namely hyperbolic $3$-space $\mathbb{H}^3$.

\begin{thm}[Donaldson, \cite{Donaldson87}] Let $X$ be a closed Riemann surface of genus $g\geq 2$ and let $\rho:\pi_1(X) \to \pslc$ be an irreducible representation. Then, there exists a $\rho$-equivariant harmonic map $f:\mathbb{H}^2 \to \mathbb{H}^3$ where the domain is identified as the universal cover of $X$. 
\end{thm}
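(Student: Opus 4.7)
The plan is to construct $f$ as the long-time limit of the harmonic map heat flow starting from an arbitrary smooth initial map. First, I would choose any smooth $\rho$-equivariant map $f_0 : \mathbb{H}^2 \to \mathbb{H}^3$; such a map exists because $\mathbb{H}^3$ is contractible, so the flat $\mathbb{H}^3$-bundle over $X$ with holonomy $\rho$ admits a smooth section. I would then evolve $f_0$ under the heat equation $\partial_t f_t = \tau(f_t)$, where $\tau$ denotes the tension field. Equivariance is manifestly preserved, and because the target $\mathbb{H}^3$ has non-positive sectional curvature, the Bochner-type inequality of Eells-Sampson applied to the heat flow on the closed surface $X$ yields a uniform bound on the energy density $e(f_t)$ in terms of $e(f_0)$, hence long-time existence; moreover the total energy $E(f_t)$ on a fundamental domain is non-increasing.

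The heart of the argument, and the place where irreducibility enters, is preventing $f_t$ from escaping to infinity in $\mathbb{H}^3$. I would argue by contradiction: suppose that along some sequence $t_k \to \infty$ the image $f_{t_k}(x_0)$ of a fixed basepoint $x_0 \in \mathbb{H}^2$ converges to a point $\xi \in \partial \mathbb{H}^3 = \cp$. By equivariance,
\[
d_{\mathbb{H}^3}\bigl(f_{t_k}(x_0),\, \rho(\gamma) f_{t_k}(x_0)\bigr) \;=\; d_{\mathbb{H}^3}\bigl(f_{t_k}(x_0),\, f_{t_k}(\gamma \cdot x_0)\bigr),
\]
and integrating the uniform gradient bound along a fixed path from $x_0$ to $\gamma \cdot x_0$ shows that the right-hand side stays bounded independently of $k$. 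Thus each $\rho(\gamma) \in \pslc$ moves the divergent sequence $\{f_{t_k}(x_0)\}$ by a bounded hyperbolic amount, which forces $\rho(\gamma) \xi = \xi$ for every $\gamma \in \pi_1(X)$. A common fixed point on $\cp$ corresponds to a common invariant line in $\mathbb{C}^2$, contradicting the irreducibility of $\rho$. Therefore $f_t$ must remain bounded on a fundamental domain of $X$, uniformly in $t$.

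With this $C^0$-bound in hand, parabolic Schauder estimates combined with the gradient bound give uniform $C^k$-bounds on compact sets for all $k$. Arzel\`a-Ascoli then produces a subsequence $f_{t_k} \to f_\infty$ converging smoothly on compact sets, with $f_\infty$ smooth and $\rho$-equivariant. Since $\frac{d}{dt} E(f_t) = -\|\tau(f_t)\|_{L^2}^2$ while $E(f_t) \ge 0$, one has $\tau(f_{t_k}) \to 0$ in $L^2$ along a further subsequence, and passing to the limit yields $\tau(f_\infty) = 0$; hence $f_\infty$ is the desired equivariant harmonic map.

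The principal obstacle throughout is the non-compactness of the target symmetric space: without a $C^0$-bound along the flow the standard convergence theory fails, and the entire role of the irreducibility hypothesis is to convert an algebraic condition on $\rho$ into the analytic control that keeps the heat flow in a bounded region of $\mathbb{H}^3$.
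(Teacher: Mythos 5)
Your proposal is correct and is essentially Donaldson's original heat-flow argument, which this paper cites rather than reproves; the template you describe (run the heat flow from an arbitrary smooth equivariant initial map, extract a uniform energy-density bound from the Eells--Sampson Bochner inequality plus Moser--Harnack, use irreducibility to show bounded displacement under $\rho(\gamma)$ forbids escape to $\partial\mathbb{H}^3$, then pass to a smooth subsequential limit with vanishing tension field) is exactly the strategy the paper then generalizes in \S 3 to the punctured, framed setting, where non-degeneracy of $(\rho,\beta)$ replaces irreducibility in the compactness/one-point-convergence step (cf.\ Lemmas \ref{eqivuse}, \ref{mt}, \ref{newm}). The one point worth spelling out is that the uniform density bound comes from coupling the Bochner inequality with the decreasing total energy via Moser--Harnack (as in Lemma \ref{ebound}), not from the maximum principle alone, which by itself would only give an exponentially growing bound.
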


This was generalized by Corlette in \cite{Cor92} to the case of  surface-group representations into more general semi-simple Lie groups. 
In this article, we generalize this in a different direction, namely to the context of \textit{marked and bordered surfaces} and \textit{framed} $\pslc$-representations, as introduced by Fock-Goncharov in their seminal work \cite{FG06}.

\vspace{.05in} 

Throughout this article, $\sm$ shall denote a marked and bordered surface, namely $\mathbb{S}$ shall be an oriented surface of genus $g$ and $k$ boundary components and $\mathbb{M}$ a non-empty set of marked points, such that each boundary component has at least one point. The rest of the marked points in the interior of $\mathbb{S}$  form the set $\mathbb{P} \subset \mathbb{M}$. Throughout, we shall assume that the Euler characteristic  $\chi(\mathbb{S} \setminus \mathbb{P}) <0$. 

\vspace{.05in}

To state our result, we briefly introduce a couple of relevant spaces and notions, which shall be discussed in more detail in \S2.  First, the deformation space of hyperbolic structures on $\sm$ define the \textit{enhanced Teichm\"{u}ller space} $\mathcal{T}^\pm\sm$.  An element $X\in $  $\mathcal{T}^\pm\sm$ is a (marked) hyperbolic surface possibly with cusps, or closed geodesic boundary components or crown ends, and is thus possibly incomplete; it admits a \textit{conformal completion} $\hat{X}$ which is a punctured Riemann surface homeomorphic to $\mathbb{S} \setminus \mathbb{P}$ obtained by attaching half-cylinders and half-planes along the geodesic boundary components of $X$ (see Definition \ref{ccomp}). Second, a  \textit{framed} $\pslc$-representation is a pair of a representation $\rho: \pi_1(\mathbb{S} \setminus \mathbb{P}) \to \pslc$ together with a $\rho$-equivariant map (the \textit{framing}) $\beta:F_\infty \to \cp$ that is defined on the \textit{Farey set}, the lift of $\mathbb{M}$ to the ideal boundary of the universal cover $\widetilde{X}$. Such a representation is \textit{type-preserving} if peripheral elements that are parabolic (respectively, hyperbolic) are mapping to elements in $\pslc$ that are parabolic (respectively, hyperbolic).  Finally, a $\rho$-equivariant map $f:\mathbb{H}^2 \to \HH^3$ is said to be \textit{asymptotic} to the framing $\beta$ if for any sequence  $x_n \to p\in F_\infty$, we have $f(x_n) \to \beta(p)$. Here, the domain of the map $f$ is the universal cover of the conformal completion $\hat{X}$, that naturally contains $\widetilde{X}$ (see \S2.1). 

\vspace{.1in}

\noindent We shall prove:

\begin{thm}\label{thm:main} Let $X \in \mathcal{T}^\pm\sm$ and $\hat{X}$ its conformal completion, and let $\hat{\rho} = (\rho, \beta) :\pi_1(\hat{X}) \to \pslc$ be a non-degenerate type-preserving framed representation. Then, there is a $\rho$-equivariant harmonic map $u:\mathbb{H}^2 \to \mathbb{H}^3$ that is asymptotic to the framing $\beta$.
 Moreover, $u$ is unique if we prescribe compatible principal parts of the Hopf differential  at the punctures. 
\end{thm}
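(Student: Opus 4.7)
The plan is to build a $\rho$-equivariant initial map $f_0 : \mathbb{H}^2 \to \mathbb{H}^3$ already capturing the framing at the ends of $\hat{X}$, run the harmonic map heat flow, and extract a limiting harmonic map whose asymptotics still match $\beta$. As preparation I would first construct local model maps at each puncture. If the end corresponds to a parabolic peripheral element $\gamma$ (so by type-preservation $\rho(\gamma)$ is parabolic with fixed point $p \in \cp$ prescribed by $\beta$), the model maps a cusp neighborhood on the universal cover into a horoball based at $p$, with depth and twist parameters fixed by the prescribed principal part of the Hopf differential. If instead the end corresponds to a hyperbolic peripheral element (a half-cylinder or half-plane attached along a closed geodesic or a crown of $X$), the framing provides the two fixed points of $\rho(\gamma)$ in $\cp$, and the model lives in a tubular neighborhood of the axis between them, again with parameters chosen to realize the prescribed principal part. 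Interpolating these local models with an arbitrary smooth $\rho$-equivariant map on a compact fundamental region produces a global $\rho$-equivariant smooth map $f_0$ that is asymptotic to $\beta$ by construction.

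Next, I would run the harmonic map heat flow $\partial_t f_t = \tau(f_t)$ with initial condition $f_0$. Since the target $\mathbb{H}^3$ is simply connected and of non-positive sectional curvature, short-time and long-time existence follow from the Eells--Sampson--Hamilton theory, and $\rho$-equivariance is preserved throughout.

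The main obstacle is maintaining the asymptotic behaviour at the ends as $t \to \infty$, since the domain is non-compact and the usual energy-decrease estimate does not by itself prevent the image from drifting away from the framing. The strategy is to compare $f_t$ with each local model $f^{\mathrm{mod}}$: by NPC convexity combined with the tension-field identities, $\psi_t := d(f_t, f^{\mathrm{mod}})$ is a subsolution of a linear parabolic inequality on a neighbourhood of each end. A maximum-principle argument on a cusp or funnel neighbourhood (in the spirit of Wan, Lohkamp, and Li--Tam) then yields uniform-in-$t$ bounds on $\psi_t$ that decay as one approaches the puncture. This simultaneously supplies the compactness needed to extract a smooth subsequential limit $u$ on compact subsets of $\mathbb{H}^2$, and ensures that the estimate $\psi_t \to 0$ at each puncture persists in the limit. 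The resulting $u$ is then a $\rho$-equivariant harmonic map asymptotic to $\beta$, with Hopf differential having the prescribed principal parts. I expect this asymptotic control step to be the genuine technical core of the argument.

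For uniqueness, let $u_1, u_2$ be two such harmonic maps with the same prescribed Hopf-differential principal parts. By NPC convexity, $\varphi := d(u_1, u_2)$ is subharmonic on $\mathbb{H}^2$, and $\rho$-equivariance lets it descend to a function on $\hat{X}$. Because both maps share the framing and the principal-part data, comparing each $u_i$ to a common local model at each puncture (as in the existence step) gives $\varphi \to 0$ at every puncture. The maximum principle on the punctured surface $\hat{X}$ then forces $\varphi \equiv 0$, whence $u_1 = u_2$.
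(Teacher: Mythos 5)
Your overall scaffolding — build a $\rho$-equivariant initial map encoding the framing, run the Eells--Sampson/Li--Tam heat flow, control the ends with a parabolic maximum principle, extract a limit, then prove uniqueness via subharmonicity on a parabolic surface — is the same as the paper's. But there are two genuine gaps, and the second is serious.

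\textbf{Crown-end models are under-specified.} For a crown end you propose a local model ``living in a tubular neighborhood of the axis'' between the two fixed points of the hyperbolic peripheral element $\rho(\gamma)$. That captures the framing at a geodesic-boundary end, but a crown end carries more data: the framing assigns a $\mathbb{Z}$-orbit of ideal points (one per boundary cusp, forming a \emph{chain} in the sense of Definition~\ref{defn:chain}), and any correct asymptotic target is that chain, not the axis. Concretely, the paper must start from a genuine harmonic map $h$ to a pleated plane (obtained from Allegretti's Theorem~\ref{thm:all} applied to the Fuchsian representation with Fock--Goncharov parameters $|z_e|$, post-composed with a bending map determined by $\arg z_e$), then interpolate rotations across the horizontal half-planes of the $q$-metric (\S3.1.2). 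A side effect of building $u_0$ this way is that its tension field decays \emph{exponentially} on the crown ends (Lemma~\ref{u0bd}); a generic interpolation of local models against a compactly supported patch would give only bounded, not exponentially decaying, tension field, and that decay is needed in the maximum-principle step where the paper introduces the corrector $F(r,\theta)=r^{-k}$ with $\Delta_{\bar g}F = O(r^{-s})$ to absorb the inhomogeneous term $-2|\tau(u_0)|$.

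\textbf{The non-drift step is missing.} You write that the maximum principle ``simultaneously supplies the compactness needed to extract a smooth subsequential limit.'' It does not: the parabolic maximum principle on a punctured end has parabolic boundary $\Gamma_T = (M\setminus K)\times\{0\}\cup \partial K\times[0,T]$, so you only get a uniform-in-$t$ bound on $\psi_t$ \emph{given} a uniform-in-$t$ bound on $\partial K$. But on $\partial K$ there is a priori no such bound — the heat flow on a non-compact domain can drift to infinity even with bounded tension field; the paper gives the explicit example $u_0(x,y)=(x,0,t_0)$ flowing to $u_t(x,y)=(x,y,\sqrt{2y^2 t + t_0^2})$ in \S2.4. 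Ruling this out is exactly where \emph{non-degeneracy} of the framed representation must enter, and your proposal never uses it for the existence step. The paper's mechanism is: non-degeneracy + type-preserving $\Rightarrow$ two semi-simple $\rho(\gamma),\rho(\gamma')$ with distinct axes (Lemma~\ref{eqivuse}); the uniform energy bound on $u_t$ (Lemma~\ref{ebound}) forces $d(u_t(\gamma x_0),u_t(x_0))\leq B$, which confines $u_t(x_0)$ to a bounded neighborhood of each axis (Lemma~\ref{mt}); the intersection of these two neighborhoods is compact (Lemma~\ref{newm}). That one-point confinement is what makes the Arzel\`a--Ascoli extraction work and what furnishes the inner-boundary data on $\partial K$. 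Without this step your argument has no defense against the drift example.

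A smaller issue: in uniqueness you claim $\varphi=d(u_1,u_2)\to 0$ at every puncture. That is stronger than what is true or needed. Both maps being asymptotic to the same chain/framing with the same principal parts gives only that $\varphi$ is \emph{bounded} on each end (via Lemma~\ref{lem:42} and the bounded comparison to a common model); then parabolicity of $\hat X$ forces the subharmonic $\varphi$ to be constant, and non-degeneracy (no global fixed axis for $\rho$) rules out the constant being realized by a geodesic translation, yielding $\varphi\equiv 0$.
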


For the second statement, recall that the \textit{Hopf differential} of a harmonic map $h$  is a holomorphic quadratic differential  $\text{Hopf}(h)$ defined  by the $(2,0)$-part of the pullback metric tensor $\text{Hopf}(h) = (h^\ast g)^{(2,0)}$ where $g$ is the metric in the target space. It is known from work of Wolf, Han-Tam-Treibergs-Wan ( \cite{HTTW95}, see also \cite{HanRemarks}) and Minsky (\cite{Min92}) that this carries information on the asymptotic behaviour of the harmonic map (see \S2.3). In our context, from the equivariance of the map, the Hopf differential descends to a meromorphic quadratic differential $q$ on $\hat{X}$ with poles of order at most two at the punctures corresponding to cusps and geodesic boundary components, and higher-order poles at the punctures corresponding to crowns. The principal part at a pole comprises the terms of negative degree in an expansion of $\sqrt q$ with respect to a choice of a coordinate chart (see Definition \ref{defn:ppart}).

\vspace{.05in}

In a companion article \cite{GS1} (see also the erratum \cite{GS1E}), we prove a similar existence result for harmonic maps from the complex plane $\mathbb{C}$ to $\mathbb{H}^3$; this can be thought of as a case when the domain is a disk with marked points on the boundary. 
%Our proof of Theorem \ref{thm:main} is obtained by extending the arguments of \cite{GS1} to this equivariant case. 
As in \cite{GS1}, the strategy is to define a suitable $\rho$-equivariant initial map $u_0:\mathbb{H}^2 \to \mathbb{H}^3$ and use the harmonic map heat flow to converge to the desired harmonic map. One of the features of the harmonic map is that it has \textit{infinite energy} whenever the surface $\mathbb{S}$ has non-empty boundary.  We continue to rely on results of Eells-Sampson, Li-Tam, and Jiaping Wang concerning such a flow, particularly in this non-compact setting.  The construction of the initial map $u_0$, specifically in the lifts of the completion of crown ends, involves extending the arguments of \cite{GS1} to this equivariant case. The proof of the convergence of the flow, however, is different from that in \cite{GS1}, and relies on the fact that the framed representation is \textit{non-degenerate}, together with an \textit{exponential decay} of the tension field for the initial map we construct. The construction of the initial map $u_0$ uses  a recent result of Allegretti (\cite[Theorem 1.2]{allegretti2021stability}) that is essentially Theorem \ref{thm:main} in the special case when $\rho$ is a Fuchsian representation into $\pslr$, and combines prior work of the first author in \cite{GupWild} and Sagman in \cite{Sagman}. We provide a brief summary of these preliminary results in \S2, before proceeding to the proof of the main result in \S3. 

\vspace{.1in}

We expect that there is an analogue of Theorem \ref{thm:main} for framed surface-group representations into $\pslnc$ for $n>2$, where we replace $\mathbb{H}^3$ by the symmetric space $X_n =\pslnc/\text{PSU}(n)$ and where the framing takes values in the space $\mathcal{F}(\mathbb{C}^n)$  of complete flags in $\mathbb{C}^n$, which is the Furstenberg boundary of $X_n$:

\begin{conj2}
\textit{Let $X \in \mathcal{T}^{\pm}(\mathbb{S}, \mathbb{M})$ and $\hat{X}$ be as before. Given a non-degenerate type-preserving framed representation $\hat{\rho} = (\rho,\beta): (\pi_1(\hat{X}), F_\infty) \to (\pslnc, \mathcal{F}(\mathbb{C}^n))$
there exists a $\rho$-equivariant harmonic map $h:\mathbb{H}^2 \to \mathbb{X}_n$
that is asymptotic to $\beta$. Moreover, the map is unique if the principal parts of the Hopf differential at the punctures are prescribed.}
\end{conj2}

We note that the analytic component of this paper, particularly the existence of the harmonic map heat flow, would carry through to the case when the target is replaced by the non-positively curved space $\mathbb{X}_n$.  However, our construction of the initial map in this paper crucially uses the geometry of $\mathbb{X}_2 = \mathbb{H}^3$; in particular, the notion of a ``pleated plane" implicitly plays a role in our argument (see \S3.1), and it has  no good analogue in higher-rank symmetric spaces. 

\vspace{.05in}

Although equivariant harmonic maps to $\mathbb{X}_n$ with the Hopf differential having poles of higher order do appear in the context of the ``wild" non-abelian Hodge correspondence (see, for example \cite{BiqBoalch} and \cite{Mochi}), the conjecture above (a special case of which is our main theorem) provides a new perspective: namely, for fixed data of the principal parts at the punctures, there is a bijective correspondence between these equivariant harmonic maps on one hand, and a suitable subspace (as in, compatible with the principal parts) of the  moduli space of framed representations, on the other.

\vspace{.1in}

\subsection*{Acknowledgements.} Several parts of the work in this paper and its prequel \cite{GS1} are contained in the PhD thesis of GS \cite{thesisGobinda}, written under SG's supervision. The authors thank Qiongling Li for her help and advice, and to Nathaniel Sagman and Mike Wolf for their comments and interest in this project. This work was supported by the Department of Science and Technology, Govt.of India grant no. CRG/2022/001822, and by the DST FIST program - 2021 [TPN - 700661].

\vspace{.1in}

\section{Preliminaries}

\subsection{Enhanced Teichm\"{u}ller space}
In what follows $\sm$ shall denote a marked and bordered surface, with $\mathbb{S}$ a compact surface of genus $g\geq 0$ with $k\geq 0$ boundary components,  and $\mathbb{M}$ is a non-empty set of marked points with  at least one marked point on each boundary component, together with a (possibly empty) subset $\mathbb{P}$ of interior marked points. We shall assume throughout that $\chi(\mathbb{S} \setminus \mathbb{P}) < 0$.  The companion paper \cite{GS1} shows that the result is also true in the non-equivariant setting, when $\mathbb{S}$ is the disk (\textit{i.e.} has genus $g=0$ and $k=1$), $\mathbb{P} = \emptyset$,  and $|\mathbb{M}| \geq 3$.

\vspace{.05in}

The \textit{enhanced Teichm\"{u}ller space} $\mathcal{T}^\pm\sm$  is the deformation space of marked hyperbolic structures on $\sm$, where the points in $\mathbb{P}$ are cusps or boundary components, and the $k$ boundary components form crown ends (where each marked point on a boundary component becomes a \textit{boundary cusp}, and each boundary arc between such boundary marked points is a bi-infinite geodesic line). In addition, there is an additional signing ($\pm$) associated to each geodesic boundary component, which can be thought of as a choice of orientation. 

We refer to Allegretti \cite[\S3]{allegretti2021stability} for a more precise definition, and \cite[Proposition 3.5]{allegretti2021stability}  for the parametrization $\mathcal{T}^\pm\sm \cong \mathbb{R}^N$ where $N=6g-6+\sum_{i=1}^k(n_i + 3)$ where $n_i$= $\#\{$marked points on the $i$-th boundary component$\}$.

\vspace{.05in} 

\noindent We introduce the following notion:

\begin{defn}[Conformal completion]\label{ccomp} Given a hyperbolic surface $X \in \mathcal{T}^\pm\sm$, its \textit{conformal completion} $\hat{X}$ is the punctured Riemann surface obtained by performing an \textit{infinite grafting} along each boundary component of $X$, that amounts to:
\begin{itemize}
    \item[(i)] attaching a half-infinite Euclidean cylinder to any boundary component that is a closed circle, and 
    \item[(ii)] attaching a Euclidean half-plane to any boundary component that is a bi-infinite geodesic line,
\end{itemize}
where the attaching is via an isometry on the circle in case (i) and the line in case (ii).   (See Figure \ref{fig1}.)

A puncture corresponding to (i) will be referred to as a ``cylinder-end", while a ``crown-end" arises from a crown of $X$ by applying (ii) to each geodesic line of the crown boundary. 
\end{defn}

\noindent \textit{Remarks.} (i) For the notion of ``infinite grafting", see \cite[\S3]{GupWild} and \cite[\S4.2-3]{SGMahangrafting}. In particular, see \cite[Lemma 4.3]{SGMahangrafting} for the fact that the conformal completion of a crown end is conformally a punctured disk.

\begin{figure}
\begin{center} 
\includegraphics[scale=.4]{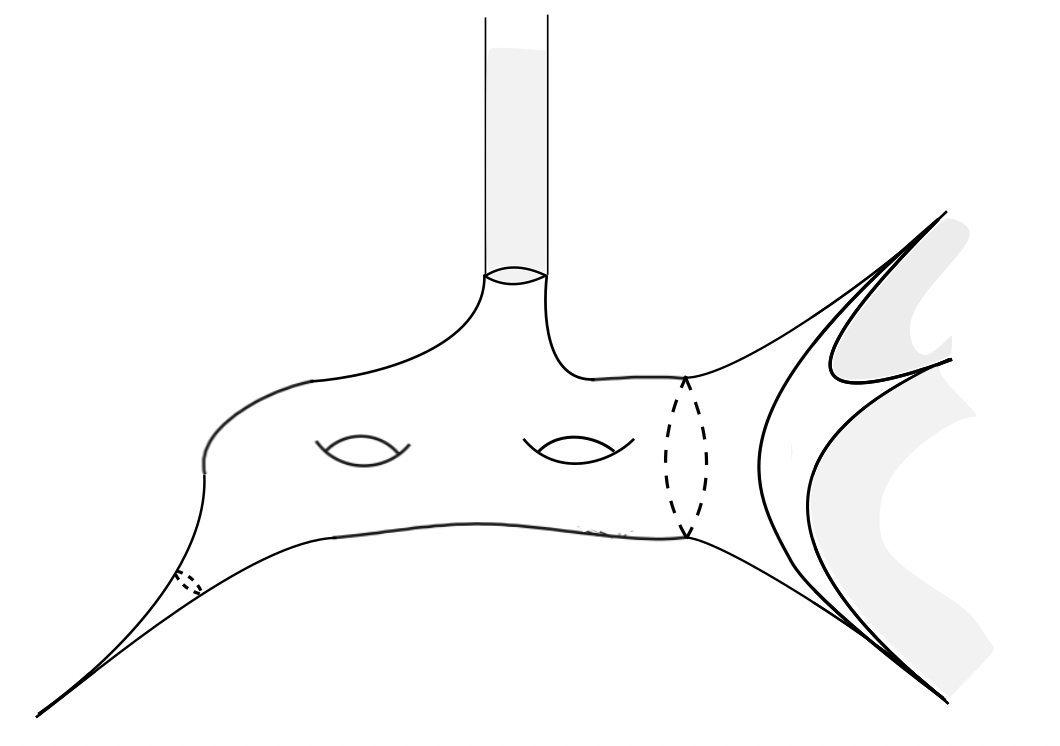}
\caption{The conformal completion of a hyperbolic surface $X$ with  one cusp, one closed geodesic boundary and one crown end involves adding Euclidean half-planes and half-cylinders (shown shaded). }
\label{fig1}
\end{center}
\end{figure} 

\vspace{.05in}

\noindent (ii) The universal cover of a conformal completion $\hat{X}$ is conformally the upper half-plane $\HH^2$, however it shall be useful to consider it as the surface $\mathcal{H}$ obtained by attaching Euclidean half-planes to each geodesic boundary component of the universal cover $\widetilde{X}$ of the hyperbolic surface $X$. In particular, $\widetilde{X} \subset \mathcal{H}$, and one can make sense of a sequence of points in $\mathcal{H}$ converging to a point in the Farey set $F_\infty$.

\vspace{.05in}

Note that the punctures of the conformal completion $\hat{X}$ are of two kinds -- one that arises from the interior marked points of $\sm$, which are either cusps or geodesic boundary components of $X$, and the other that arises from each crown end of $X$ after attaching half-planes as in (ii) of the definition above.This dichotomy is useful to keep in mind, as these two types of punctures will need to be handled differently.

\subsection{Framed representations}

An \textit{ideal triangulation} of $\sm$ is a triangulation of the punctured surface $\mathbb{S} \setminus \mathbb{P}$ with vertices at the points of $\mathbb{M}$.  If one equips $\sm$ with a hyperbolic structure, i.e.\ considers an element of $\mathcal{T}^\pm\sm$, then the lift of such a triangulation is realized by an ideal triangulation on the universal cover $\mathcal{H} \cong \mathbb{H}^2$ (\textit{c.f.} Remark (ii) after Definition \ref{ccomp}), with ideal vertices in the \textit{Farey set}, which is the set of lifts of the points of $\mathbb{M}$ to the ideal boundary of $\widetilde{X} \subset \mathcal{H}$.

\vspace{.05in}

We have already defined a {framed $\pslc$-representation} in the Introduction; we note, however, the following:

(i)  It is immediate  from the $\rho$-equivariance of the framing that the framing assigns to a lift of $p \in \mathbb{P}$ the fixed point of the corresponding conjugate of $\rho(\gamma)$, where $\gamma$ is the peripheral curve around $p$. 

(ii) A marked hyperbolic surface $X$ that is an element of the enhanced Teichm\"{u}ller space determines (an equivalence class of) a framed representation $(\rho, \beta)$ that is \textit{Fuchsian}: in that case the image of $\rho$ is a discrete group $\Gamma$ of $\pslr =\text{Isom}^+(\HH^2) $, and there is an invariant convex subset $\mathcal{C} \subset \mathbb{H}^2$ such that $\mathcal{C}/\Gamma = X$. Moreover, the lifts of $\mathbb{M}$ can then be identified with points in the ideal boundary of $\mathcal{C}$; this determines the framing $\beta$.

\vspace{.05in}

If $X\sm$ is the space of framed $\pslc$-representations, then the \textit{moduli space of framed $\pslc$ representations} is \[\widehat{\chi}\sm=X\sm/\text{PSL}_2(\mathbb{C}),\] where the quotient is by the action $A.(\rho,\beta_{\rho})= (A\rho A^{-1},A.\beta_{\rho})$ for $A\in \text{PSL}_2(\mathbb{C})$. 

\vspace{.1in} 
In \cite{FG06}, Fock-Goncharov  introduced moduli spaces of framed $\pslnc$-representations (for any $n\geq 2$), and provided a birational parametrization of these spaces (see Theorem 1 of \cite{FG06}). In the case of $\pslc$, their theorem provides the birational map $\widehat{\chi}\sm \cong (\mathbb{C}^\ast)^E$ where $E$ is the number of edges of an ideal triangulation $T$ of $\sm$. Briefly, the Fock-Goncharov coordinate of an edge is the complex cross-ratio of the four points in $\cp$ determined by the image under the framing of the vertices of the two adjacent ideal triangles. Representations with real and positive Fock-Goncharov parameters are precisely Fuchsian representations, corresponding to $\mathcal{T}^\pm \sm$. 

\vspace{.05in} 
 This parametrizes an open dense set of $\widehat{\chi}\sm$, namely, the subset where each such cross-ratio is well-defined.  By \cite[Theorem 9.1]{All-Bri}, this subset is precisely the space of non-degenerate framed representations, as defined below (see also \cite[Definition 4.3]{All-Bri}):

\begin{defn}[Non-degenerate framed representation] \label{def:deg}
    A framed representation $\hat{\rho} = (\rho, \beta)$ is \textit{degenerate} if one of the following holds:
\begin{enumerate}
    \item The image of $\beta$ is a single point $\{p\}$ and the monodromy around each puncture is parabolic with fixed point $p$ or the identity.

    \item The image of $\beta$ has two points $\{p,q\}$ and the monodromy around each puncture  fixes both $p$ and $q$.

\item  There is a boundary segment $I$ of $\mathbb{S}$ joining two consecutive marked points $p_1,p_2$, and a lift $\tilde{I}$, such that its endpoints are given by lifts $\tilde{p_1},\tilde{p_2} \in F_\infty$ satisfying $\beta(\tilde{p_1}) = \beta(\tilde{p_2})$. 

\end{enumerate}

    The framed representation is said to be non-degenerate if it is not degenerate.  

\end{defn}

In what follows we denote $\Pi := \pi_1(\mathbb{S} \setminus \mathbb{P})$. Recall that an element $A\in \text{PSL}_2(\mathbb{C})$ is called \textit{semi-simple} if it is elliptic or hyperbolic. We note the following: 

\begin{lem}\label{eqivuse}
 Let $\rho:\Pi\rightarrow \text{PSL}_2(\mathbb{C})$ be a non-degenerate type-preserving framed representation. Then there exists $\gamma,\gamma'$ in $\Pi$ such that $\rho(\gamma),\rho(\gamma')$ are two different semi-simple elements with different axes.
\end{lem}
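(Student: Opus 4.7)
The plan is to argue by contradiction. Suppose no pair $\gamma,\gamma' \in \Pi$ satisfies the conclusion; then every semi-simple element of $\rho(\Pi)$, if any, has the same axis $\{p,q\} \subset \cp$. The goal is to show that this forces $(\rho,\beta)$ to be degenerate in the sense of Definition~\ref{def:deg}, contradicting the hypothesis. I split into two cases.

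\textit{Case A} ($\rho(\Pi)$ contains no semi-simple element). Every element is parabolic or the identity, and a standard fact about subgroups of $\pslc$ is that any such subgroup shares a common fixed point $p \in \cp$: two parabolics with distinct fixed points always generate a subgroup containing a hyperbolic element, via an explicit computation of their commutator. Since $\rho(\Pi)$ has no hyperbolic image, type-preservation rules out every hyperbolic $X$-peripheral: no point of $\mathbb{P}$ is a closed-geodesic boundary, and no boundary component of $\mathbb{S}$ is a crown end (whose peripheral is hyperbolic, with axis the waist geodesic). Thus $\mathbb{S}$ has empty boundary, $\mathbb{M}=\mathbb{P}$ consists of cusps only, and $\rho$-equivariance forces the framing at each lift of $\mathbb{P}$ to be the parabolic fixed point $p$. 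With all peripheral monodromies parabolic and fixing $p$, this is exactly condition (1) of Definition~\ref{def:deg}.

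\textit{Case B} (all semi-simple elements share axis $\{p,q\}$). Conjugating any semi-simple $A \in \rho(\Pi)$ with axis $\{p,q\}$ by any $B \in \rho(\Pi)$ produces a semi-simple element with axis $\{Bp,Bq\}=\{p,q\}$, so $B$ preserves $\{p,q\}$ setwise. A parabolic would have to preserve $\{p,q\}$ while fixing only one point---impossible---so $\rho(\Pi)$ has no parabolic element, and type-preservation rules out cusps in $\mathbb{P}$. Any element swapping $p$ and $q$ would be a half-turn elliptic whose axis in $\mathbb{H}^3$ is the perpendicular bisector of $\overline{pq}$, a semi-simple element with a different axis; our hypothesis then forces $\rho(\Pi)$ to lie in the Cartan torus fixing $p,q$ pointwise. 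By $\rho$-equivariance the framing at each lift of $\mathbb{P}$ lies in $\{p,q\}$; extending this to the lifts of boundary marked points yields condition (2).

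The main obstacle is completing Case B at the boundary-marked-point lifts. Since their $\Pi$-stabilizers are typically trivial, $\rho$-equivariance alone does not force the framings there into $\{p,q\}$. I would close this gap by analyzing the lifted boundary arcs of each crown end: as $\rho(\Pi)$ lies in a one-parameter torus whose only $\cp$-fixed points are $\{p,q\}$, and since non-degeneracy (per \cite[Theorem 9.1]{All-Bri}) forces every Fock--Goncharov cross-ratio on an ideal triangulation to be well-defined, one either constrains the remaining framings to $\{p,q\}$ (giving condition (2)) or locates a boundary segment whose lifted endpoints carry equal framings (condition (3)).
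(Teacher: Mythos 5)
Your two-case decomposition matches the logical skeleton of the paper's argument, and Case A is correct: two parabolics with distinct fixed points generate a non-parabolic element (the paper uses the products $\alpha^n\delta$ and a trace computation rather than a commutator, but either works), and type-preservation then pushes everything into condition~(1) of Definition~\ref{def:deg}. Likewise, the first half of Case B --- that if all semi-simple elements of $\rho(\Pi)$ share an axis $\{p,q\}$, then there are no parabolics, no swap elements, and hence $\rho(\Pi)$ lies in the one-parameter torus fixing $p$ and $q$ pointwise --- is fine.

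However, the gap you flag at the end of Case B is genuine, and the sketch you offer does not close it. You need to conclude that the framed representation is degenerate, yet Definition~\ref{def:deg}(2) requires the \emph{image of $\beta$} to consist of exactly two points, and the framings at lifts of boundary marked points are completely unconstrained by $\rho$-equivariance, since their $\Pi$-stabilizers are trivial. Choosing those framings away from $\{p,q\}$ and generically (so that no cross-ratio degenerates) makes conditions (1)--(3) fail, while the well-definedness of the Fock--Goncharov cross-ratios is a generic, open condition that does not visibly force either values in $\{p,q\}$ or a coincidence as in condition~(3). The paper avoids this difficulty altogether: it never argues directly from Definition~\ref{def:deg}, but instead cites Proposition~3.1 of \cite{MR4286048} to upgrade a non-degenerate framed representation without apparent singularities (type-preservation supplies the ``no apparent singularity'' part) to a non-degenerate representation in the sense of Definition~2.4 of \cite{MR4286048}, a condition on $\rho$ alone that directly excludes both the all-parabolic-common-fixed-point configuration and the fixes-two-ideal-points-pointwise configuration. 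That imported translation is doing real work here; your argument would have to replicate it, and it is not clear that this can be done from Definition~\ref{def:deg} alone.
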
 
\begin{proof}
It follows from the definition of type-preserving in the Introduction, that $\rho(\gamma) \neq I$ for any peripheral element $\gamma \in \Pi$; in fact $\rho(\gamma)$ is a parabolic (resp. loxodromic) if $\gamma$ is parabolic (resp. hyperbolic). In particular, the non-degenerate representation $\rho:\Pi\rightarrow \text{PSL}_2(\mathbb{C})$ has \textit{no apparent singularity}. 
Following the proof of Proposition $3.1$ in \cite{MR4286048}, we conclude that a non-degenerate framed representation without apparent singularities is indeed a non-degenerate representation in the sense of Definition $2.4$ of \cite{MR4286048}. If $\rho(\Pi)$ contains only parabolic elements with a global fixed point in $\mathbb{CP}^1$, then by definition, $\rho$ is degenerate in the sense of Definition 2.4 of \cite{MR4286048}, a contradiction. Let $\alpha$ and $\delta$ be two different parabolic elements in $\rho(\Pi)$ with different fixed points. Without loss of generality, assume $\alpha$ fixes $\infty$,  with $\alpha=\begin{bmatrix}
1&1\\
0&1	
\end{bmatrix}$ and $\delta=\begin{bmatrix}
a&b\\
c&d
\end{bmatrix}$.
Then $\text{tr}^2(\alpha^n\delta)=(a+d+nc)^2$ for all $n\in\mathbb{N}$. For any parabolic element $\gamma\in \text{PSL}_2(\mathbb{C})$, we have $ \text{tr}^2(\gamma)=4$. Therefore $\alpha^n\delta$ is parabolic if and only if $c=0$ implying $\alpha$ and $\delta$ have a unique common fixed point, a contradiction. Therefore there exists a semi-simple element $T$ in $\rho(\Pi)$ with an axis $L$. If all elements of $\rho(\Pi)$ fixes the ideal points $p_{+}$ and $p_{-}$ point-wise, then $\rho$ is degenerate. Therefore, there is an element $g$ that moves the ideal points $p_{+}$ and $p_{-}$, then $gTg^{-1}$ is a semi-simple element of the same type as $T$ with different axes. This completes the proof.
\end{proof}

\subsection{Harmonic maps and their asymptotic behaviour}

We provide definitions in a more general context: Given two Riemannian manifolds $(M,g)$ and $(N,h)$, a ($C^2$-smooth) \textit{harmonic map} $u:M\to N$ is a critical point of the energy functional
\begin{equation}\label{eq:efunc}
E^U(u)=\frac{1}{2}\int_{U}||du||^2 dvol_g
\end{equation} 
on every relatively compact open subset $U$ of the domain. Here, the function in the integrand is the called the {energy density} of $u$, often denoted by $e(u)$. Alternatively, a harmonic map is characterized by the fact that its \textit{tension field} \begin{equation}\label{eq:tf}
    \tau(u)=\text{Tr}_g(\nabla du)
\end{equation} vanishes (see Lemma 2.3 of \cite{GS1}). 

\vspace{.05in} 

In the case the domain is a surface $\Sigma$, as in this paper, the harmonicity of the map $u$ depends only on the conformal class of the domain metric $g$.  As mentioned in \S1, the \textit{Hopf differential} of such a harmonic map is the holomorphic quadratic differential $\text{Hopf}(u) = (u^\ast h)^{(2,0)}$, where $h$ is the metric in the target manifold. This plays an important role in determining the asymptotic behaviour of the harmonic map, as we shall now explain.  

\vspace{.05in} 

Recall that a holomorphic quadratic differential  $q$ induces \textit{horizontal} and \textit{vertical foliations} on the domain surface $\Sigma$, smooth away from the zeros of $q$, and induces the $q$-metric on $\Sigma$, which is a  flat metric  having singularities at the zeros. (See, for example, \cite[Definitions 2.7 and 2.8]{GS1}.)

The following result summarizes the relation of the singular-flat geometry of the Hopf differential to the behaviour of a harmonic map  far from the zeros of the differential -- see \cite[Theorem 4.2]{Min92}, and in our companion paper \cite{GS1}, see Proposition 2.10, and the remark following Proposition 2.13. (Note that the factor of $2$ in Minsky's paper is avoided in the latter paper by considering a scaling of the $q$-metric by a factor $4$.)

\begin{prop}\label{prop:est} 
    Let $u:\Sigma \to \mathbb{H}^3$ be a harmonic map with Hopf differential $q$. Let $\Sigma$ be equipped with the $4q$-metric. Then for $R\gg 0$, we have the estimate $$ \lVert u (x) - \Pi(x)\rVert_{C^1} = O(e^{-\alpha R})$$
    where $\Pi$, defined in a neighborhood of $x$, is a  map to a geodesic segment in $\mathbb{H}^3$ that collapses the vertical foliation
    and is an isometry along the leaves of the horizontal foliation. 
    Here $\alpha>0$ is a universal constant, and $R$ is the distance of $x$ from the zero set of $q$. 
\end{prop}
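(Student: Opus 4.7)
The plan is to work in the natural flat coordinate of $q$, reduce the problem to a scalar PDE for a quantity measuring the pointwise anisotropy of $du$, apply a maximum-principle comparison to extract exponential decay, and finally upgrade the resulting $C^0$ bound to a $C^1$ bound by elliptic regularity.

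On any simply connected region avoiding the zeros of $q$ I would introduce the natural coordinate $w = X + iY$ with $q = dw^2$, so that the $4q$-metric becomes $4(dX^2 + dY^2)$; the horizontal and vertical foliations are $\{Y = \text{const}\}$ and $\{X = \text{const}\}$ respectively; and the $4q$-ball of radius $R$ about $x$ sits inside a Euclidean $w$-disk $D$ of radius $R/2$. Decomposing $du$ into its holomorphic and anti-holomorphic parts produces energy densities $H, L \geq 0$ with $H + L = e(u)$, $H - L = J(u)$ the Jacobian, and the pointwise constraint $HL = |q|^2/\sigma^2 = 1/16$ on the regular set of $q$ (using $\sigma = 4$ for the $4q$-metric and $|q| = 1$ in $w$-coordinates). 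The limiting collapsing map $\Pi$ has $H = L = 1/4$, so I would measure the deviation of $u$ from $\Pi$ by the scalar $v := \log(H/L)$, which may be taken nonnegative without loss of generality (otherwise swap horizontal and vertical, which only reverses the direction of the geodesic segment in the definition of $\Pi$).

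The heart of the argument is a maximum-principle comparison. Writing $H = \tfrac{1}{4} e^{v/2}$ and $L = \tfrac{1}{4} e^{-v/2}$, the Bochner identity $\Delta_w \log H = 2(H - L)$ for harmonic maps into a space of constant curvature $-1$ on a flat domain combines with $HL = 1/16$ to give $\Delta_w v = 2\sinh(v/2) \geq v$ wherever $v \geq 0$. Thus $v$ is a subsolution of the modified Helmholtz operator $\Delta_w - 1$ on $D$. A standard interior gradient estimate for harmonic maps into $\mathbb{H}^3$ (cf.\ \cite[\S 2]{Min92}) provides a universal $C^0$ bound $M$ for $v$ on $\partial D$, and the radial supersolution $V(r) := M \cdot I_0(r)/I_0(R/2)$, with $I_0$ the modified Bessel function of order zero, then dominates $v$ on $\partial D$ and solves $\Delta_w V = V$. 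The maximum principle gives $v(x) \leq M/I_0(R/2) = O(e^{-R/2}/\sqrt{R})$, which in the original $4q$-distance yields $v(x) = O(e^{-\alpha R})$ for any fixed $\alpha < 1/2$.

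Finally, exponential decay of $v$ forces $H, L \to 1/4$ exponentially, so the pullback $u^* h$ differs from the rank-one tensor $\Pi^* h = 4\, dX^2$ by $O(e^{-\alpha R})$ in $C^0$, once $\Pi$ is defined in a neighborhood of $x$ by unit-speed parametrizing the image of the horizontal leaf through $x$ as a geodesic in $\mathbb{H}^3$ and collapsing each vertical leaf to the corresponding point of that geodesic. To upgrade this to a $C^1$ estimate I would apply interior Schauder estimates to the harmonic-map system in the $w$-coordinate on a slightly smaller disk, where the right-hand side of the linear equation satisfied by $u - \Pi$ is controlled by $v$ and its first derivatives. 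The main obstacle is producing a universal a priori $C^0$ bound on $v$ near $x$, independent of $R$ and of the particular harmonic map; this must be arranged so that the ensuing constant $\alpha$ depends only on the target curvature $-1$ and on the $4q$-normalization, as asserted in the proposition.
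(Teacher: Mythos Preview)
Your reduction has a genuine gap: the decomposition into holomorphic and anti-holomorphic energy densities $H,L$ with $H-L=J(u)$ and the algebraic constraint $HL=|q|^2/\sigma^2$ is specific to harmonic maps \emph{between surfaces}. It requires a complex structure on the target to split $u^*TN\otimes\mathbb{C}$ into $(1,0)$ and $(0,1)$ parts; the three-dimensional target $\mathbb{H}^3$ carries no such structure, there is no Jacobian in this sense, and the identity $HL=1/16$ simply fails. (If instead you take $H=|\partial u|^2$ and $L=|\bar\partial u|^2$ using only the domain complex structure, then $\overline{u_z}=u_{\bar z}$ for any real map and hence $H=L$ identically.) Consequently the clean sinh-Gordon reduction $\Delta_w v=2\sinh(v/2)$ is not available here, and the Bochner identity you quote does not hold as stated for $\mathbb{H}^3$.

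The paper does not prove this proposition itself; it is cited from \cite{Min92} for $\mathbb{H}^2$ targets and from the companion paper \cite{GS1} for the extension to $\mathbb{H}^3$. Your overall architecture --- a Bochner-type inequality, comparison with a radial supersolution on a large $q$-disk, then interior Schauder estimates --- is indeed the right one, but for $\mathbb{H}^3$ the scalar one tracks is the energy density $e(u)$ (equivalently, the single remaining parameter in the pullback metric once $\langle u_w,u_w\rangle=1$ forces $|u_X|^2-|u_Y|^2=4$ and $\langle u_X,u_Y\rangle=0$). The Bochner formula on a flat domain with constant target curvature $-1$ then yields a differential \emph{inequality} for this scalar, with an additional nonnegative $|\nabla du|^2$ term coming from the second fundamental form of the (generically two-dimensional) image in $\mathbb{H}^3$; this inequality still suffices for the maximum-principle comparison. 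The details of how Minsky's $\mathbb{H}^2$ argument is adapted to $\mathbb{H}^3$ are what \cite{GS1} supplies.
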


\vspace{.05in} 

In this paper, the harmonic maps $u:\mathbb{H}^2 \to \mathbb{H}^3$  we shall consider would moreover be \textit{equivariant} with respect to a representation $\rho:\Gamma \to \pslc$. Here,  the domain $\HH^2$ is the universal cover of a punctured Riemann surface $\hat{X}$ equipped with the action of the deck-group $\Gamma$, and  $\rho(\Gamma)$ acts by isometries on the target $\HH^3$. Namely, the map would satisfy $u(\gamma \cdot x) = \rho(\gamma)\cdot u(x)$
for all $x\in \HH^2$ and $\gamma \in \Gamma$. The Hopf differential $\text{Hopf}(u)$ would then be $\Gamma$-invariant, and descend to a holomorphic quadratic differential $q$ on $\hat{X}$. We shall moreover require that  $q$ extends to \textit{finite order} poles at the punctures of $\hat{X}$. We shall now describe some further data associated with these poles, namely their \textit{principal parts}, that are defined with respect to a choice of a coordinate neighborhood around each puncture. See also \cite[\S2.5]{allegretti2021stability} for a discussion. 

\vspace{.05in} 

\begin{defn}[Principal differential]
A principal differential on $\mathbb{D}^*$
of order $n\geq 3$ is defined to be a meromorphic differential of the form
\begin{equation}\label{eq:ppart}
z^{-\epsilon}\left(\frac{{\alpha}_r}{z^r}+ \frac{{\alpha}_{r-1}}{z^{r-1}}+\cdot+\frac{{\alpha}_1}{z}\right) dz
\end{equation} 
on $\mathbb{D}^*$ where $r=\lfloor \frac{n}{2}\rfloor$ and $\epsilon =0$ if $n$ is even and $1/2$ if $n$ is odd.  The space of such principal differentials will be denoted by $Princ(n)$.
\end{defn}

\begin{defn}[Principal part, Residue]\label{defn:ppart}
Given a meromorphic quadratic differential q
on $X$ with a pole of order $n\geq 3$ at $p$, and a choice of coordinate disk $U\cong \mathbb{D}$
around the point $p$, we define its principal part $Pr(q)$ to be the principal differential (up to a choice of sign) such that $\sqrt{q|_U}-Pr(q)$ is  integrable on $U$. If  the order $n$ is even, the (analytic) residue of the principal part is defined to be residue of $Pr(q)$, namely, the coefficient $\alpha_1$ in the expression \eqref{eq:ppart} (again, defined up to a choice of sign).

In the case that $q$ has a pole of order $n=2$ at $p$, with a leading term $\frac{a}{z^2} dz^2$, then the principal part  is defined to be $\pm \frac{\sqrt a}{z} dz$  and its residue is defined to be $\pm 4\pi i \sqrt{a}$. (See \cite[\S2.5]{allegretti2021stability}.) 

\end{defn}
\vspace{.05in}

\noindent We shall also need the following notion (see \cite[\S4.3]{allegretti2021stability}):

\begin{defn}[Compatible principal part - I]\label{defn:comp} A principal part $\mathbb{P}$ for a pole of order $m>2$ that is even, is said to be \textit{compatible} with a hyperbolic crown end $C$ if the crown end has $m-2$ boundary cusps, and its metric residue, as defined in \cite[Definitions 2.8,2.9]{GupWild}, equals the real part of the residue of $\mathbb{P}$.  If the order $m>2$ is odd, then the principal part is  compatible with $C$ if the crown end has $m-2$ boundary cusps.

If $\mathbb{P}$ is for a pole of order $m\leq 2$, then it is said to be compatible with a geodesic boundary of length $L$ or a cusp (when $L=0$) if $L^2 = 16\pi^2 \lvert a\rvert \sin^2(\theta/2)$ where the principal part (or leading coefficient) of $\mathbb{P}$ is $ae^{i\theta}$.
\end{defn}

\vspace{.05in}

We can now state the following existence and uniqueness result from \cite[Theorem 1.2]{allegretti2021stability}  for framed $\pslr$-representations which are Fuchsian, that extends previous work of \cite{GupWild} and \cite{Sagman}:

\begin{thm}[Allegretti]\label{thm:all} Given an element  $Y$ of $\mathcal{T}^\pm\sm$, and a punctured Riemann surface $\hat{X}$, there is a harmonic diffeomorphism $F : \hat{X} \rightarrow Y \setminus \partial Y $ that is homotopic to the marking, and whose Hopf differential is a  meromorphic quadratic differential with finite order poles at the punctures. Moreover, such a map is unique if one prescribes the principal part at each pole, that is compatible with the corresponding end in $Y$, in the sense defined above.  
\end{thm}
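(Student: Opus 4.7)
The plan is to combine existence ingredients from Gupta's construction of harmonic maps with higher-order poles \cite{GupWild}, Sagman's diffeomorphism criterion \cite{Sagman}, and classical Wolf/Minsky theory for poles of order at most two. The target $Y\setminus \partial Y$ decomposes into a compact core together with ends of three types -- cusps, funnels around closed geodesic boundaries, and crown ends -- and the compatibility condition of Definition \ref{defn:comp} is exactly what is required so that each prescribed principal part can be realized by a known local harmonic model matching the corresponding end of $Y$.

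For existence, I would build an initial diffeomorphism $F_0:\hat{X}\to Y\setminus\partial Y$ in the correct homotopy class by gluing explicit local models near the punctures to an arbitrary diffeomorphism on a compact piece. Around a puncture corresponding to a cusp or closed geodesic boundary end of $Y$, $F_0$ is given by the Wolf/Minsky model harmonic map whose double pole has residue matching $L^2 = 16\pi^2 |a|\sin^2(\theta/2)$ as in Definition \ref{defn:comp}; around a puncture corresponding to a crown end, $F_0$ is given by the map from a punctured disk to a crown neighborhood constructed in \cite{GupWild} realizing the prescribed principal differential, whose metric residue and order are consistent with the number of boundary cusps of the crown. After smoothly interpolating across an annular collar, the tension field $\tau(F_0)$ is compactly supported in $\hat{X}$. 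I would then run the harmonic map heat flow $\partial_t F = \tau(F)$ from $F_0$; long-time existence is standard since the target is NPC and the initial tension is controlled (Eells-Sampson, Jiaping Wang). Uniform control on the flow near each puncture comes from Proposition \ref{prop:est}: the collapsing geodesic determined by the principal part acts as a barrier that $F_t$ cannot escape. Interior Schauder estimates then produce smooth convergence to a harmonic map $F$ whose Hopf differential extends meromorphically with the prescribed principal parts. Sagman's criterion upgrades $F$ from a harmonic map to a diffeomorphism, using the compatibility of the principal parts with the number of crown boundary cusps.

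For uniqueness, suppose $F_1,F_2$ are two harmonic diffeomorphisms with the same compatible principal parts. Since $\mathbb{H}^2$ has non-positive sectional curvature, the function $d(F_1,F_2)^2$ is subharmonic on $\hat{X}$. The real content is showing this function tends to zero at every puncture, which follows from the sharp asymptotic description in Proposition \ref{prop:est}: in the $q$-metric near a puncture, each $F_i$ is $C^1$-close to the same collapse map onto a horizontal geodesic segment in $\mathbb{H}^2$ which is entirely determined by the common principal part, so $d(F_1,F_2)$ decays exponentially in the $q$-distance from the puncture. The maximum principle on $\hat{X}$ then forces $F_1\equiv F_2$.

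The step I expect to be the main obstacle is showing that the heat flow actually converges near the punctures in a way that preserves the prescribed principal parts. The flow is of infinite-energy type, and a priori one could lose control of the asymptotic geometry at infinity in the limit. The reason it ultimately works is that the initial data has the correct principal part exactly by construction, so applying Proposition \ref{prop:est} time-uniformly to $F_t$ keeps each end $C^1$-close to the same local geodesic collapse throughout the flow, which both prevents loss of regularity at infinity and pins down the principal part of the limit. A secondary subtlety, handled by Sagman's arguments in \cite{Sagman}, is ensuring the limit is genuinely a diffeomorphism in the presence of higher-order poles, where the Hopf foliation has complicated local structure near the crowns.
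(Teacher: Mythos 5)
The paper does not prove this statement; Theorem \ref{thm:all} is imported as a black box from \cite[Theorem 1.2]{allegretti2021stability}, which in turn builds on \cite{GupWild} and \cite{Sagman}. Gupta--Sau use it in \S3.1.1 only to obtain the pre-initial $\rho_0$-equivariant harmonic map $h$ whose modification yields $u_0$ for their heat flow. There is therefore no internal proof to compare yours against.

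As a reconstruction of the cited result, your sketch identifies the correct local models (the collapsing map with $L^2 = 16\pi^2|a|\sin^2(\theta/2)$ at order-two poles, the \cite{GupWild} crown-end model at higher-order poles) but has a methodological mismatch and a real gap. The existence proofs in \cite{GupWild}, \cite{Sagman}, and \cite{allegretti2021stability} do not run the harmonic map heat flow: they solve Dirichlet problems on a compact exhaustion of $\hat{X}$, with the local model maps as boundary data and as two-sided $C^0$ barriers giving uniform control, then pass to a limit; the diffeomorphism property is carried along from the Schoen--Yau/Sampson theory for harmonic maps between hyperbolic surfaces on each compact piece. Your heat-flow alternative has a circularity at its key step: you invoke Proposition \ref{prop:est} ``time-uniformly'' for $F_t$, but that estimate is proved for \emph{harmonic} maps, so applying it to the non-harmonic $F_t$ to keep the flow pinned near the collapse map at infinity is unjustified without an independent barrier argument -- which is exactly what the exhaustion method supplies, and what Gupta--Sau had to produce by a different mechanism (the exponential decay of Lemma \ref{u0bd} and the argument of Proposition \ref{pmp}) even with Theorem \ref{thm:all} already available. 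In your uniqueness paragraph, Proposition \ref{prop:est} governs only poles of order $\geq 3$; at an order-one pole the harmonic map is asymptotically a totally geodesic embedding of a horodisk, not a collapse onto a geodesic (Lemma \ref{map3}), so ``$d(F_1,F_2)$ decays to zero'' is not the right statement there. What one actually proves is boundedness of $d(F_1,F_2)$ at each end (using, at poles of order $\leq 2$, the separate estimates the paper cites from \cite[Lemma 4.8]{allegretti2021stability} and \cite[Lemma 5.9]{Sagman}), then constancy of a bounded subharmonic function on the parabolic surface $\hat{X}$, and finally an argument that the constant vanishes.
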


\textit{Remark.} The fact that the compatibility condition is necessary, is a consequence of the asymptotic behaviour of the harmonic map (Proposition \ref{prop:est}) -- in \cite{GupWild} see Proposition 2.29 and the remark following it.

\vspace{.05in}

We shall also record some results on the behaviour of $\mathbb{Z}$-equivariant harmonic maps from the universal cover $\HH^2$ of a punctured disk $\mathbb{D}^\ast$ to $\HH^3$ such that the  Hopf differential of such a map  descends to a higher-order pole at the puncture of $\mathbb{D}^\ast$. 
Here, the infinite cyclic group $\mathbb{Z}$ can be thought of as acting by deck-translations on the domain, and by powers of some element $A \in \pslc$ in the target. This can be thought of as a $\mathbb{Z}$-equivariant version of the asymptotic behaviour of harmonic maps from $\mathbb{C}$ to $\HH^3$ with polynomial Hopf differential, studied in \cite{GS1}.

\vspace{.05in}

For this, we introduce the notion of ``chains of geodesics", that can be thought of as $\mathbb{Z}$-invariant versions of twisted ideal polygons (considered in \cite{GS1}):

\begin{defn}[Chains]\label{defn:chain} Given an $m\geq 1$ and a loxodromic element $A \in \pslc$, a \textit{chain} of geodesics in $\HH^3$ is defined by an ordered set $M_\infty$ of ideal points on $\cp$ that is invariant under the infinite cyclic group $\mathbb{Z} \cong \langle A\rangle$. Each successive pair of ideal points in $M_\infty$ is connected by a bi-infinite geodesic, and the union of these forms the chain. Here, $M_\infty$ is defined by taking the ordered set of $m$-points $M_0= \{p_1, p_2,\ldots p_m\}$ and considering its $\mathbb{Z}$-orbit; namely, $M_\infty = \bigcup_{n \in \mathbb{Z}} \{ A^n \cdot p_1, A^n \cdot p_2, \cdots, A^n \cdot p_m\}$. We shall refer to a chain having $m$ points in $M_0$ (the fundamental domain of the $\mathbb{Z}$-action) as an $m$-chain. 
\end{defn} 

\textit{Remark.} In the case that $A \in \pslr$ is a hyperbolic element, and the chain $\mathcal{C}$ lies on a totally geodesic copy of $\HH^2$, say $H \subset \mathbb{H}^3$, then the quotient $H/\langle A\rangle$ has a crown end with $m$ geodesics obtained from $\mathcal{C}$.

\vspace{.05in}

\noindent We also introduce the following terms associated with chains that will be used subsequently:

%\begin{defn}[Cusps of a chain]
%Given a chain $\mathcal{C}$ defined by a set of ideal points $M_\infty$, and  an ideal point $p\in M_\infty$ , there are exactly two geodesics in the chain incident on it, and we say a ``cusp" of the chain is a neighborhood of this pair intersected with a horoball based at $p$. 
%\end{defn}

\begin{defn}[Straightening]\label{defn:stn}
Let $\mathcal{C}$ be a chain in $\HH^3$  as above, with a corresponding ordered set of ideal points $M_\infty$. Any pair of geodesics  $g_1, g_2$ incident at $p \in M_\infty $ lies on pair of totally-geodesic copies of the hyperbolic plane, that intersect along a line $\ell$; an elliptic rotation with axis $\ell$ can be applied to points of $M_\infty$ with index less than $p$ such that in the resulting chain the geodesics $g_1, g_2^\prime$ are co-planar and lie on either side of $\ell$. Doing this equivariantly, i.e., applying the same elliptic element to each element of the $\mathbb{Z}$-orbit of $p$, and repeating this for each cusp in $M_0$, we obtain a ``straightened" chain $\mathcal{C}^\prime$ that lies on a totally geodesic copy of $\HH^2$ and is a lift of a hyperbolic crown (\textit{c.f.} the previous remark).  
\end{defn} 

\begin{defn}[Metric residue]\label{defn:resch} For an $m$-chain $\mathcal{C}$ where $m$ is even, we define the ``metric residue" of the chain to be the real number $\alpha$ (defined up to sign) obtained as follows: truncate the geodesics in $\mathcal{C}$ by choosing a $\mathbb{Z}$-invariant collection of horoballs at each point in $M_\infty$, and let $l_1,l_2,\ldots l_m$ be the $m$ lengths of these truncated geodesics (up to the $\mathbb{Z}$-action). Then $\alpha :=  \pm \sum\limits_{i=1}^m (-1)^i l_i$. 
Alternatively, we can define the metric residue to be  the metric residue of the hyperbolic crown associated to the straightening of $\mathcal{C}$ as defined above. (The metric residue of a crown was defined in  \cite[Definitions 2.8,2.9]{GupWild}.)
\end{defn}

\begin{defn}[Compatible principal part --II]\label{defn:comp2}
In continuation of Definition \ref{defn:comp}, we will say that a principal part $\mathbb{P}$ for a pole of order $n>2$ that is even, is said to be compatible with an $m$-chain $\mathcal{C}$ if $m=n-2$ and the metric residue of $\mathcal{C}$ (as defined above) equals the real part of the residue of $\mathbb{P}$.  In the case that $n$ is odd, we drop the latter requirement.
\end{defn} 

\vspace{.05in}

\noindent The main observation is that as a consequence of the estimates in Proposition \ref{prop:est}, we can determine the asymptotic behaviour of a harmonic map near a pole of the Hopf differential). In the statements below, $U \cong \mathbb{D}^\ast$ is a neighborhood of the pole, and $\widetilde{U} = \{ z \in \HH^2\ \vert\ \text{Im}(z)>1 \}$ its universal cover, such that the covering map $p: \widetilde{U} \to U$ is given by $p(z) = e^{2\pi i z}$. In the following two lemmas, we shall also assume that the domain is equipped with the Euclidean metric. 

\begin{lem}[Asymptotic behaviour: higher-order pole]\label{map2chain} 
Let $h:\widetilde{U} \to \HH^3$ be a $\mathbb{Z}$-equivariant harmonic map such that $\text{Hopf}(h)$ descends to a holomorphic quadratic differential $q$ on $U \cong \mathbb{D}^\ast$ having a pole of order $n\geq 3$ at the puncture. Then, the map $h$ is asymptotic to an  $(n-2)$-chain of geodesics $\mathcal{C}$. Moreover, if $n$ is even, and $\mathbb{P}$ is the principal part of the pole, then the real part of the residue of $\mathbb{P}$ equals the metric residue of $\mathcal{C}$.
\end{lem}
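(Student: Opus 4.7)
The plan is to read off the asymptotic geometry of $h$ from the singular-flat structure induced by $q$ near its pole of order $n\geq 3$, using Proposition \ref{prop:est} in each horizontal half-plane of the $q$-foliation.

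First I would analyze the local horizontal foliation: in a natural coordinate $w=\int\sqrt{q}$, a pole of order $n\geq 3$ has a standard punctured neighborhood that decomposes into $n-2$ semi-infinite horizontal half-planes glued in cyclic order along vertical trajectory rays that approach the pole. Pulling this back to the universal cover $\widetilde{U}$, the foliation lifts to a $\mathbb{Z}$-invariant configuration, and a fundamental domain for the deck action meets exactly $n-2$ of these half-planes, which I label $H_1,\ldots,H_{n-2}$ in cyclic order. Since each $H_i$ can be chosen to avoid a fixed neighborhood of the zero set of $q$, Proposition \ref{prop:est} applies on every $H_i$ and gives an $O(e^{-\alpha R})$ approximation of $h$ by a collapsing map $\Pi_i$ onto a bi-infinite geodesic $g_i\subset\HH^3$: $\Pi_i$ collapses the vertical foliation and is an isometry along the horizontal leaves, which are bi-infinite lines inside $H_i$.

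Next I would track the endpoints. Two adjacent half-planes $H_i$ and $H_{i+1}$ are separated by a vertical ray escaping to the pole, and the collapsing maps $\Pi_i,\Pi_{i+1}$ both send this ray to a single ideal point $p_{i+1}\in\cp$; hence $g_i$ and $g_{i+1}$ share the endpoint $p_{i+1}$. Labelling the remaining free endpoint of $g_{n-2}$ as $p_1$, this produces an ordered set $M_0=\{p_1,\ldots,p_{n-2}\}$. By $\mathbb{Z}$-equivariance of $h$, the generator $A:=\rho(1)\in\pslc$ carries $M_0$ to the analogous ordered tuple in the next fundamental domain, and the union $M_\infty=\bigcup_{k\in\Z}A^k\cdot M_0$ together with the geodesics $A^k\cdot g_i$ is exactly the $(n-2)$-chain $\mathcal{C}$ of Definition \ref{defn:chain}. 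That $A$ is loxodromic (and not parabolic or elliptic) follows from the fact that the $q$-widths of the half-planes grow linearly under the deck translation, forcing a positive translation length of $A$ on the axis determined by the chain.

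For the residue statement, assume $n$ is even. Choose a $\mathbb{Z}$-invariant family of horocycles at the points of $M_\infty$, which corresponds in the $w$-plane to truncating each half-plane by a horizontal cut at sufficiently large height. The truncated horizontal edges have $q$-lengths $l_1,\ldots,l_{n-2}$, and by the $C^1$-estimate in Proposition \ref{prop:est} these agree up to exponentially small error with the hyperbolic lengths of the truncated $g_i$; the signed sum $\pm\sum_i(-1)^i l_i$ is the metric residue of $\mathcal{C}$ in the sense of Definition \ref{defn:resch}. On the other hand, integrating $\sqrt{q}$ along a small loop around the pole gives $2\pi i$ times the residue of the principal part $\mathbb{P}$ in the sense of Definition \ref{defn:ppart}; unpacking this in the $w$-coordinate shows that the real part of this residue is precisely the signed sum of the horizontal $q$-lengths traversed between successive vertical separatrices. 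Matching sign conventions yields $\mathrm{Re}\bigl(\mathrm{res}(\mathbb{P})\bigr)=\pm\sum_i(-1)^i l_i$, as required.

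The main obstacle I anticipate is the careful alignment of sign conventions: the principal part, the cyclic ordering of the half-planes, and the metric residue are each defined only up to sign, and they must be coordinated so that the equality of residues holds on the nose. A secondary technical point is verifying that the $\mathbb{Z}$-invariant horocycle choices on the target are compatible with the horizontal truncations in the $w$-plane uniformly in the fundamental domain; the $C^1$ nature of the estimate in Proposition \ref{prop:est} is what allows one to exchange $q$-lengths for hyperbolic lengths in the limit, but extracting a single signed equality (rather than merely an asymptotic one) requires a short bookkeeping argument using $\mathbb{Z}$-equivariance to reduce everything to a single fundamental domain.
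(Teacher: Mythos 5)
Your proof is correct and follows essentially the same approach as the paper: decompose the punctured neighborhood of the pole into the cyclic arrangement of horizontal and vertical half-planes of the $q$-metric, apply Proposition~\ref{prop:est} in each horizontal half-plane to approximate $h$ by a collapsing map to a geodesic, use the collapsing of the vertical foliation between successive horizontal half-planes to match the ideal endpoints, extend by $\mathbb{Z}$-equivariance to produce the $(n-2)$-chain, and handle the residue via truncation (a polygonal exhaustion in the paper) where the vertical sides collapse and the alternating sum of the horizontal $q$-lengths converges to the metric residue, which is then equated with the real part of the residue of the principal part.

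One aside in your write-up is incorrect and worth flagging: you justify loxodromicity of $A$ by asserting that ``the $q$-widths of the half-planes grow linearly under the deck translation,'' but the pullback of $q$ to $\widetilde{U}$ is $\mathbb{Z}$-invariant, so the deck translation is a $q$-isometry and those widths do not grow at all. Loxodromicity of $A$ is not established within this lemma in the paper either; it is supplied by the context in which the lemma is invoked. Your endpoint-matching step (``$\Pi_i,\Pi_{i+1}$ both send this ray to a single ideal point'') is also terser than the paper's argument, which passes through the collapsing of leaves on the overlapping vertical half-plane and a continuity argument (\emph{c.f.}\ \cite[Lemma 3.3]{HTTW95}); but the underlying idea is the same and the step can be filled in.
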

\begin{proof}
On the neighborhood $U$ of  the pole of order $n\geq 3$, the  meromorphic Hopf differential $q$ induces a flat metric that can be decomposed into a cyclic collection of $(n-2)$ horizontal and  $(n-2)$ vertical half-planes around the puncture, alternating and overlapping along quarter-planes. Here, a horizontal (resp. vertical) half-plane acquires the Euclidean metric induced by $q$ and is foliated by bi-infinite leaves of the horizontal (resp. vertical) foliation. See, for example, \cite[pg. 79]{GupWolf}.  Passing to the universal cover $\widetilde{U}$ , this induces a chain of overlapping horizontal and vertical half-planes invariant under the deck-group $\mathbb{Z}$, such that the fundamental domain has exactly $(n-2)$ horizontal and $(n-2)$ vertical half-planes. 

In each horizontal half-plane $H$, the estimate in Proposition \ref{prop:est} together with the Canoeing Lemma (see \cite[Lemma 2.12]{GS1}) implies that a bi-infinite horizontal leaf far from the boundary is mapped by $h$  almost-isometrically to a parametrized geodesic line $\ell$ in $\HH^3$; that is, the restriction of $h$ to $H$ is asymptotic to $\ell$.   A pair of successive horizontal half-planes $H_1, H_2$  have an overlapping vertical half-plane $V$; by Proposition \ref{prop:est} the $h$-images of leaves on $V$ that are far from the boundary collapse as the distance from the boundary tends to infinity; this implies that the geodesic lines $\ell_1$ and $\ell_2$ corresponding to $H_1$ and $H_2$ share an ideal endpoint (\textit{c.f.} \cite[Lemma 3.3]{HTTW95}).  Thus, the chain of half-planes maps to a chain of geodesics in the sense of Definition \ref{defn:chain}; since there are $(n-2)$ such planes in a fundamental domain, it is an $(n-2)$-chain. 

The statement about the principal part follows from the argument in \cite[Proposition 2.29]{GupWild}, and we refer the reader to that for details: first, we can exhaust the punctured-disk $U$ by a sequence of regions $P_i$  bounded by polygonal boundaries, comprising  alternating horizontal and vertical segments of lengths  $L_i \pm O(1) \to \infty$. Proposition \ref{prop:est} applies, and since the images of the vertical sides have lengths tending to zero, the alternating sum of the lengths of the images of the horizontal sides tends to the metric residue of the chain $\mathcal{C}$ (\textit{c.f.} Definition \ref{defn:resch}). However, this metric residue is the real part of the principal part by \cite[Lemma 2.28]{GupWild}, and we are done.    
\end{proof}

\noindent At  a pole of order two  of the Hopf differential, we note the following analogous result essentially from \cite{Sagman}:

\begin{lem}\label{map2}
Let $h:\widetilde{U} \to \HH^3$ be a $\mathbb{Z}$-equivariant harmonic map where the action on the target space is by iterates of a loxodromic element $A\in \pslc$. Assume that $\text{Hopf}(h)$ descends to a holomorphic quadratic differential $q$ on $U \cong \mathbb{D}^\ast$ having a pole of order $2$ at the puncture. Then, the map $h$ is asymptotic to the  geodesic line $\ell$ in $\HH^3$ that is the axis of $A$. Moreover, if $L$ is the length of $\ell/\langle A\rangle$, then $L^2 = 16\pi^2 \lvert a\rvert \sin^2(\theta/2)$ where the principal part of $\mathbb{P}$ is $ae^{i\theta}$.    
\end{lem}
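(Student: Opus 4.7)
The plan is to adapt the singular-flat/asymptotic-geometry strategy of Lemma~\ref{map2chain} to the simpler structure at an order-two pole. First, I would choose a coordinate disk $U \cong \mathbb{D}^\ast$ in which $q$ has leading term $c\, z^{-2}\, dz^2$ with $c = ae^{i\theta}$ and $a = |c|$, matching the notation of the statement. Pulling back via $z = e^{2\pi i\zeta}$ gives
\begin{equation*}
\sqrt{p^\ast q} \;=\; \pm 2\pi i\sqrt{c}\, d\zeta + O(e^{-2\pi\operatorname{Im}\zeta}) \;=\; \pm 2\pi i\, a^{1/2} e^{i\theta/2}\, d\zeta + O(e^{-2\pi\operatorname{Im}\zeta})
\end{equation*}
on the strip $\widetilde{U}$. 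In particular, the horizontal foliation in the region $\{\operatorname{Im}\zeta > T\}$ consists of nearly-parallel bi-infinite lines $\operatorname{Re}(\sqrt{c}\,\zeta) = \mathrm{const}$, uniformly far from the zero set of $p^\ast q$ as $T\to\infty$.

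Second, I would apply Proposition~\ref{prop:est} equipped with the $4q$-metric. By Minsky's estimate combined with the Canoeing Lemma, as in the proof of Lemma~\ref{map2chain}, each horizontal leaf $\mathcal{L}$ is mapped by $h$ to a bi-infinite curve lying within $C^1$-distance $O(e^{-\alpha T})$ of some bi-infinite geodesic $g_\mathcal{L}$ in $\HH^3$; the vertical foliation collapses, forcing the $g_\mathcal{L}$'s associated to horizontal leaves deep in the cusp to accumulate on a single geodesic $g$. The $\mathbb{Z}$-equivariance $h(\zeta+1) = A\cdot h(\zeta)$ then forces $A\cdot g = g$, and since $A$ is loxodromic, $g$ is the unique $A$-invariant axis $\ell$. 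This proves the first assertion.

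For the length formula, I would integrate $\sqrt{p^\ast q}$ along the straight segment from $\zeta$ to $\zeta+1$, obtaining complex displacement $\pm 2\pi i\sqrt{c} = \pm 2\pi a^{1/2} e^{i(\theta/2 + \pi/2)}$ in the $q$-metric, whose real part is $\mp 2\pi a^{1/2}\sin(\theta/2)$. In the $4q$-metric the horizontal distance from $\zeta$ to $\zeta+1$ is therefore $4\pi a^{1/2}|\sin(\theta/2)|$. By Proposition~\ref{prop:est}, this horizontal $4q$-distance is transferred with $o(1)$ error to the projected displacement of the $h$-image along $\ell$; on the other hand, since $h(\zeta+1) = A\cdot h(\zeta)$ with both points within $o(1)$ of $\ell$, this projected displacement tends to the translation length $L$ of $A$. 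Equating yields $L = 4\pi a^{1/2}|\sin(\theta/2)|$, hence $L^2 = 16\pi^2 |a|\sin^2(\theta/2)$, as required.

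The principal obstacle lies in the second step: although the foliation structure is simple, the horizontal leaves generically have irrational slope in the $\zeta$-plane, and the deck translation permutes them rather than fixing each, so one must patch local Minsky estimates across adjacent leaves to extract a single limiting geodesic. This is handled by using the vertical-collapse clause of Proposition~\ref{prop:est} to show that the approximating geodesics for nearby leaves drift by $o(1)$, and then invoking the equivariance to pin the limit to the unique $A$-invariant axis $\ell$.
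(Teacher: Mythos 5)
Your proposal is correct and follows essentially the same strategy as the paper: apply Proposition~\ref{prop:est} and the Canoeing Lemma (as in Lemma~\ref{map2chain}) to the asymptotically constant pulled-back quadratic differential, pin the limiting geodesic to the axis of $A$ via $\mathbb{Z}$-equivariance, and read off $L$ by computing the horizontal $4q$-displacement of a fundamental translation. The paper's proof is terser, invoking Sagman's Lemmas~5.3 and~5.4 for both the collapsing behaviour and the length formula, whereas you carry out the residue computation directly; the content is the same.
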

\begin{proof}
    A quadratic differential on $U \cong \mathbb{D}^\ast$ with a pole of order $2$ at the puncture pulls back to a constant quadratic differential on the half-plane $\widetilde{U}$. As before, Proposition \ref{prop:est} and the Canoeing lemma then implies the harmonic map $h\vert_{\widetilde{U}}$ is asymptotic to a collapsing map to a geodesic line $\ell$.  (See also \cite[Lemma 5.4]{Sagman}.) The statement about the principal part follows from the computation in \cite[Lemma 5.3]{Sagman}.
\end{proof}

\noindent Finally, for a pole of order one of the Hopf differential, we note the following consequence of \cite[Lemma 4.8]{allegretti2021stability}. Here, the that the domain  $\widetilde{U}$ is equipped with the  restriction of the hyperbolic metric on $\HH^2$ such that $U$ is a hyperbolic cusp at the puncture. 

\begin{lem}\label{map3}
Let $h:\widetilde{U} \to \HH^3$ be a $\mathbb{Z}$-equivariant harmonic map where the action on the target space is by iterates of a parabolic element $A\in \pslc$. Assume that $\text{Hopf}(h)$ descends to a holomorphic quadratic differential $q$ on $U \cong \mathbb{D}^\ast$ having a pole of order $1$ at the puncture. Then, the map $h$ is asymptotic to the map $c$ that embeds $\widetilde{U}$ as a horodisk in a totally-geodesic plane in  $\HH^3$. 
\end{lem}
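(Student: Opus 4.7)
The plan is to invoke Allegretti's Lemma 4.8 after a suitable normalization, reducing the $\HH^3$-target case to the $\HH^2$-target setting treated there.

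First, by conjugating in $\pslc$, I would place the parabolic $A$ so that it fixes $\infty$ in the upper half-space model of $\HH^3$ and acts as $(w,t) \mapsto (w + \alpha, t)$ for some $\alpha \in \C^\ast$; further conjugating by a dilation, we may assume $|\alpha| = 1$. The $A$-invariant totally geodesic planes are then precisely the vertical half-planes $P_{w_0} = \{(w_0 + s\alpha, t) : s \in \R,\, t > 0\}$ parallel to the line $\R\alpha$. The candidate asymptotic map takes the form $c(z) = (w_0 + \alpha\,\mathrm{Re}(z),\, \mathrm{Im}(z))$, which is an $A$-equivariant totally geodesic isometric embedding of $\widetilde U$ (with its hyperbolic metric) onto the horodisk $\{t > 1\}$ in $P_{w_0}$; in particular $c$ is harmonic with vanishing Hopf differential.

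Second, I would compute the pulled-back Hopf differential of $h$. With covering map $p(z) = e^{2\pi i z}$, a simple pole $q \sim (a/w)\,dw^2$ at $w = 0$ lifts to
\[
\tilde q(z)\, dz^2 = -4\pi^2 a\, e^{2\pi i z}\bigl(1 + O(e^{2\pi i z})\bigr)\, dz^2,
\]
so $|\tilde q|$ decays exponentially in $y = \mathrm{Im}(z)$. This exponential decay, together with $\mathbb{Z}$-equivariance by the parabolic $A$, is exactly the hypothesis of Allegretti's Lemma 4.8. In the planar $\HH^2$-target case that result already gives the desired asymptotic convergence $h \to c$; to transfer this to $\HH^3$, I would first show that the image of $h$ is localized at bounded distance from a unique $A$-invariant plane $P_{w_0}$, after which the planar argument applies verbatim.

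The principal obstacle is precisely this localization step. Equivariance by the parabolic $A$ forces any limit point of $h(z)$ in $\cp$ to coincide with the unique fixed point $\infty$ of $A$, so the height coordinate of $h(z)$ tends to infinity as $y\to\infty$; controlling the transverse Euclidean component modulo the line $\R\alpha$ is what requires the decay of $\tilde q$. Here a Bochner-type comparison between $h$ and a candidate $c$, using the fact that $c$ is a harmonic map with vanishing Hopf differential, pins down the correct horodisk in $P_{w_0}$. Once this localization is achieved, Allegretti's planar asymptotic estimates imply $C^1$-convergence of $h$ to $c$ at the puncture, completing the proof.
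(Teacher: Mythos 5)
Your proposal follows the paper's route in its essentials: the candidate map $c$ is the same (the embedding of $\widetilde U$ as a horodisk in a totally geodesic plane, which is the lift of the map verified in Allegretti's Lemma~4.5 to have finite energy and an order-one pole), and the endgame is the same appeal to Allegretti's Lemma~4.8. Your computation that the lifted Hopf differential decays like $e^{-2\pi y}$ is a correct unpacking of what ``pole of order one'' means in the universal cover, and is consistent with, though more explicit than, what the paper records.

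Where you diverge is that you flag the transfer from $\HH^2$-valued to $\HH^3$-valued maps as a separate ``localization'' step --- showing that the $A$-equivariant map $h$ actually stays at bounded distance from a \emph{single} $A$-invariant plane $P_{w_0}$ in the one-parameter family of such planes --- and you sketch a Bochner-type comparison to handle it. The paper's proof does not isolate this as a step; it simply cites Lemma~4.8 after verifying the finite-energy and pole-order claims. Your instinct is reasonable: a parabolic fixing $\infty$ preserves a one-parameter family of vertical planes, and equivariance alone does not pin down one of them. However, your resolution is only a sketch; the ``Bochner-type comparison'' is not actually carried out, and it is not immediate that exponential decay of the Hopf differential by itself prevents slow transverse drift of the image. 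If you intend to go this route you should either (a) show that Allegretti's Lemma~4.8, or the Schoen--Yau / Li--Tam machinery behind it, already applies in the NPC target $\HH^3$ so that no separate localization is needed, or (b) make the comparison argument precise --- e.g.\ note that the subharmonic function $d(h(\cdot), c(\cdot))$ descends, invoke finite energy of $c$ and the decay of $\mathrm{Hopf}(h)$ to bound it on the fundamental domain, and then use parabolicity of the punctured disk as in the paper's uniqueness argument. As written, the gap you yourself identified is not closed.
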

\begin{proof}
Using the upper half-plane model for both domain and range, the map $c:\widetilde{U} \to \HH^2$ is defined by $c(x,y) = (x,y)$. When the domain is equipped with the restriction of the hyperbolic metric, a straightforward computation verifies that the total energy $E(c) <\infty$ and $\text{Hopf}(h)$ descends to a quadratic differential with a pole of order one on $U \cong \mathbb{D}^\ast $. (See also \cite[Lemma 4.5]{allegretti2021stability}.) The second statement then follows from \cite[Lemma 4.8]{allegretti2021stability}. 
\end{proof}

\subsection{Harmonic map heat flow}

Continuing with the more general set-up, let $M,N$ be two complete Riemannian manifolds, and let  $u_0: M \to N$ be an initial map. 

The \textit{harmonic map heat flow} is the following nonlinear PDE: 
\begin{equation}\label{heat1}
\begin{split}
\frac{\partial u}{\partial t}&=\tau(u(x,t))\\
 u(x,0)&=u_0(x),  
\end{split}
\end{equation}
where the tension field $\tau(u(x,t))$ was defined in \eqref{eq:tf}). We shall often use the notation $u_t(x) := u(x,t)$ to think of this flow as a family of maps parametrized by time $t\in [0,\infty)$.

This can be thought of as the gradient flow for the energy functional \eqref{eq:efunc}. In the seminal work \cite{EellsSampson}, Eells-Sampson showed that in the case that the manifolds $M,N$ are compact, and the sectional curvatures of $N$ are non-positive, the flow exists for all time and converges to a harmonic map.

\vspace{.05in}

In the case when $M,N$ are non-compact, the following existence-and-uniqueness result for solutions to \eqref{heat1} was proved in \cite[Theorem 3.1]{JiapingWang} :

 \begin{thm}[Long-time solution]\label{wang}
Let $M$ and $N$ be two complete Riemannian manifolds such that the sectional curvatures $K_N\leq0$. Let $u_0:M \rightarrow N $ be a $C^2$ map. If $$b(x,t) = \left(\int_MH(x,y,t)|\tau(u_0)|^2(y)dy\right)^\frac{1}{2}$$ is finite for all $(x,t)\in M\times(0,\infty)$ where $H(x,y,t)$ is the heat kernel of $M$, then $\eqref{heat1}$ has a solution $u(x,t)$ defined for all $(x,t)\in M\times(0,\infty)$, that satisfies the tension field bound $|\tau(u)(x,t)|\leq b(x,t)$. Moreover, if $N$ is simply-connected, and for any $T>0$, the integral  $\int_{0}^{T}\int_\mathbb{C} e^{-cr^2(x)}b^2(x,t)dxdt<\infty$ for some $c>0$, the solution is unique.  (Here $r(\cdot)$ is the distance function from a basepoint.) 
\end{thm}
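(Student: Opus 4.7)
The plan is to prove the two main assertions — long-time existence with the stated tension-field bound, and uniqueness under the integral condition — by combining short-time parabolic theory with parabolic maximum-principle arguments adapted to the non-compact setting, leveraging the curvature hypothesis $K_N \leq 0$ at every step.

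First, I would establish short-time existence of a $C^2$ solution $u(x,t)$ to \eqref{heat1} using standard quasilinear parabolic theory on relatively compact domains of $M$, exhausting $M$ by such domains and passing to a limit via interior estimates. The key ingredient is then a pointwise bound on the tension field. A Bochner-type computation along the flow, combined with $K_N \leq 0$, shows that $|\tau(u)|^2$ is a subsolution of the heat equation on $M$:
\begin{equation*}
\left(\frac{\partial}{\partial t} - \Delta_M\right) |\tau(u)|^2 \leq 0.
\end{equation*}
This is the non-compact analogue of the identity that Eells--Sampson exploited in the compact case.

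Next, I would invoke Karp's maximum principle (or the analogous stochastic-completeness / Gaussian-growth argument) for subsolutions of the heat equation on complete manifolds, applied with initial datum $|\tau(u_0)|^2$. Convolution with the heat kernel $H(x,y,t)$ then yields
\begin{equation*}
|\tau(u)|^2(x,t) \leq \int_M H(x,y,t)\, |\tau(u_0)|^2(y)\, dy \;=\; b(x,t)^2,
\end{equation*}
which is precisely the tension-field bound in the conclusion, provided the right-hand side is finite (the hypothesis). Because this bound controls $\partial_t u$ on every parabolic ball, finite-time blow-up is ruled out, and short-time existence bootstraps to a solution defined for all $t \in (0,\infty)$.

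For uniqueness, since $N$ is simply-connected and $K_N\leq 0$ it is a Hadamard manifold; in particular the squared distance $\phi(x,t) := d_N(u_1(x,t), u_2(x,t))^2$ between two solutions with the same initial data is smooth and convex along geodesics of $N$. A direct computation using the second variation of energy, together with $K_N\leq 0$, gives the parabolic subsolution inequality $(\partial_t - \Delta_M)\phi \leq 0$ with $\phi(\cdot,0)\equiv 0$. Under the Gaussian-weighted $L^2$ hypothesis on $b$, I would then apply an $L^2$-type uniqueness theorem for non-negative subsolutions of the heat equation on complete manifolds (of Karp / Li--Yau type) to conclude $\phi\equiv 0$, i.e., $u_1 = u_2$.

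The main obstacle is precisely the non-compact maximum-principle step: on a general complete $M$, subsolutions of the heat equation do not automatically satisfy the comparison principle one would like, and one needs stochastic completeness of $M$ or a suitable growth control on the subsolution for the convolution bound to be valid. The pointwise finiteness of $b(x,t)$ and the Gaussian-weighted $L^2$ integrability hypothesis in the uniqueness statement are calibrated precisely to match the growth hypotheses in Karp-type theorems, so the technical heart of the argument is verifying that these hypotheses feed into the appropriate non-compact parabolic comparison results.
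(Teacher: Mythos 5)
This statement is not proved in the paper: it is quoted as Theorem 3.1 of Jiaping Wang's paper and only cited, so there is no in-paper proof to compare your sketch against. Judged on its own terms, your outline captures the two correct structural ingredients — the Eells--Sampson--type Bochner identity showing that, when $K_N\leq 0$, $|\tau(u_t)|^2$ satisfies $(\partial_t-\Delta_M)|\tau(u_t)|^2\leq 0$, and a non-compact parabolic maximum principle to drive the convolution bound and the uniqueness — and the second piece does match the tool the present paper itself uses (Proposition \ref{mpr}, Li's $L^2$ parabolic maximum principle, which is indeed the Karp/Li--Yau circle of ideas).

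There are, however, two concrete gaps. First, you invoke a Karp-type maximum principle directly on $|\tau(u_t)|^2$ to conclude $|\tau(u_t)|^2(x,t)\leq\int_M H(x,y,t)|\tau(u_0)|^2(y)\,dy$. Such principles require an a priori growth (or weighted $L^2$) control on the subsolution, but for an arbitrary solution of the flow on a complete non-compact $M$ you have no such control on $|\tau(u_t)|^2$ to begin with — it is precisely what you are trying to bound. The actual route (Li--Tam, Wang) avoids this circularity: one solves the Dirichlet problem on a compact exhaustion $\Omega_k\nearrow M$ with $u|_{\partial\Omega_k\times[0,T]}=u_0$, so that $|\tau|^2$ vanishes on the lateral boundary; the compact parabolic maximum principle then gives $|\tau(u^{(k)})|^2\leq\int_{\Omega_k}H_{\Omega_k}(x,y,t)|\tau(u_0)|^2\,dy$ with the Dirichlet heat kernel, which is dominated by the global kernel; passing to the limit produces the bound for the limiting solution without ever needing an a priori growth estimate. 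You should make that exhaustion step explicit rather than calling on Karp directly. Second, for uniqueness you apply an $L^2$ parabolic comparison theorem to $\phi=d_N(u_1,u_2)^2$, but the hypothesis you are given is a Gaussian-weighted integrability condition on $b^2$, not on $\phi$. The missing link is the estimate $d_N(u_t(x),u_0(x))\leq\int_0^t|\partial_s u_s(x)|\,ds=\int_0^t|\tau(u_s)(x)|\,ds\leq\int_0^t b(x,s)\,ds$ (using the tension-field bound from the first part), which lets you transfer the integrability hypothesis on $b$ into the required growth control on the distance between the two solutions so that Proposition \ref{mpr} (or its Karp analogue) applies. Also note that the standard Schoen--Yau computation shows $d_N(u_1,u_2)$, not its square, is a subsolution of the heat equation; the squared distance is more convenient for convexity but then the inequality must be re-derived, so be careful which object you feed into the maximum principle.
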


We shall apply this to our setting where $M = \HH^2$ and $N = \HH^3$.  Note that the above result does not guarantee that the long-time solution \textit{converges} to a harmonic map; a simple example is given as follows:  Let $u_0:\mathbb{H}^2\rightarrow\mathbb{H}^3$ be defined by $u_0(x,y)=(x,0,t_0)$ where we are using the upper half-plane model for the domain and upper half-space model for the target. Then one can check that $u_t(x,y)=(x,y,\sqrt{2y^2t+t_0^2})$ is a solution to the harmonic map heat flow \eqref{heat1}, with initial map $u_0$. A straightforward computation shows that the  tension field of $u_0$  is uniformly bounded, so the hypotheses of Theorem \ref{wang} hold; however, clearly as $t\rightarrow\infty$, the maps $u_t$ do \textit{not} converge.

\vspace{.05in}

In Donaldson's work mentioned in the Introduction, this issue of convergence was resolved using the fact that the representation being considered was irreducible.  
In our case, we first prove a \textit{one-point} convergence (see \S3.3) using the fact that our framed representation is {non-degenerate}, and then prove a global distance bound in \S3.4 by crucially using an exponential decay of the tension field of the initial map (see Lemma \ref{u0bd}).

\subsection{Some analytic preliminaries}
In this section we state a few results from the analysis of PDEs that we shall use in this paper. 

\vspace{.05in}

First, the following general result is from \cite{SchoenYau79},  see also \cite[Lemma 3.22]{GS1} for a proof. Here, recall that if $(M,g)$ is a  Riemannian manifold, then a function $u:M \to \mathbb{R}$ is said to be a \textit{subsolution of the heat equation} if \[\left(\Delta_g-\frac{\partial}{\partial t}\right)u\geq 0\] where $\Delta_g$ is the Laplacian on $(M,g)$.

\begin{lem}[\cite{SchoenYau79}]\label{sy}
Let $(M,g)$ and $(N,h)$ be two Riemannian manifolds such that $N$ is simply connected non-positively curved manifold. Assume that $\Omega \subset M$ is an open subset and $v,w:\Omega \times [0,\infty)\rightarrow N$ are two solutions of the harmonic map heat flow \eqref{heat1}. Then the distance function $d_N(v,w)$ is a subsolution of the heat equation. 
\end{lem}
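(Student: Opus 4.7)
I would set $\phi(x,t) := d_N\bigl(v(x,t),w(x,t)\bigr)$ and view it as the composition $\phi = F \circ U$, where $U := (v,w): \Omega \times [0,\infty) \to N \times N$ and $F : N\times N \to \mathbb{R}$ is the distance function on $N$. Since $N$ is simply-connected and non-positively curved, Cartan--Hadamard guarantees that $F$ is smooth on the complement of the diagonal $\Delta \subset N\times N$ and that $d_N^2$ is smooth everywhere. The whole argument rests on combining the composition formula for $\Delta_g$ with the convexity of $F$ on the Hadamard manifold $N\times N$.

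\textbf{Main computation.} At a point $(x_0,t_0)$ with $v(x_0,t_0) \neq w(x_0,t_0)$, treating $t$ as a parameter, the composition formula for the Laplacian of a real-valued function composed with a map yields
\begin{equation*}
\Delta_g \phi \;=\; \operatorname{tr}_g\bigl( U^\ast\,\nabla^2 F\bigr) \;+\; \bigl\langle \nabla F\bigm|_U,\ \tau_{N\times N}(U_t)\bigr\rangle,
\end{equation*}
where $U_t(x) := U(x,t)$ and $\tau_{N\times N}$ denotes the tension field for the product target (cf.\ \eqref{eq:tf}). For the product metric on $N \times N$ one has $\tau_{N\times N}(U_t) = \bigl(\tau(v_t),\tau(w_t)\bigr)$, and since $v,w$ solve \eqref{heat1} this equals $(\partial_t v, \partial_t w) = \partial_t U$; therefore the second summand is exactly $\partial_t\phi$ and the identity rearranges to $(\Delta_g - \partial_t)\phi = \operatorname{tr}_g(U^\ast \nabla^2 F)$. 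Non-negativity of the right-hand side is the standard convexity of distance on an NPC manifold: letting $\gamma$ be the unit-speed geodesic from $p = v(x_0,t_0)$ to $q = w(x_0,t_0)$ of length $\ell$, and $J$ the Jacobi field along $\gamma$ associated with a variation whose endpoint vectors are $(X,Y) \in T_pN \oplus T_qN$, the second variation of arc length gives
\begin{equation*}
\nabla^2 F\bigl((X,Y),(X,Y)\bigr) \;=\; \int_0^{\ell}\Bigl(|\dot J|^2 - \bigl\langle R^N(J,\dot\gamma)\dot\gamma,\,J\bigr\rangle\Bigr)\,ds
\end{equation*}
(modulo boundary contributions controlled by the tangential components of $(X,Y)$, which drop out in the relevant trace). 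Since $K_N \leq 0$, the curvature term contributes non-negatively, so $\nabla^2 F \geq 0$ and hence $(\Delta_g - \partial_t)\phi \geq 0$ at every point where $v \neq w$.

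\textbf{Regularization at the diagonal, and the main obstacle.} Where $v(x_0,t_0) = w(x_0,t_0)$ the function $\phi$ is only Lipschitz, so the inequality has to be promoted to a distributional statement. I would replace $F$ by the smooth regularization $F_\varepsilon(p,q) := \sqrt{\varepsilon^2 + d_N(p,q)^2}$: this is globally smooth on $N\times N$ because $d_N^2$ is smooth on a Hadamard manifold, and a short computation shows $\nabla^2 F_\varepsilon \geq 0$ (it inherits convexity from $d_N^2$ on NPC targets). The same composition argument then yields $(\Delta_g - \partial_t)(F_\varepsilon \circ U) \geq 0$ pointwise on all of $\Omega \times [0,\infty)$; letting $\varepsilon \to 0$ delivers the subsolution inequality for $\phi$ in the distributional sense, as required. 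The main obstacle in the plan is the convexity bound on the product $N \times N$: one must treat the second variation carefully with endpoints varying on both sides of $\gamma$ so that the boundary terms in the variation formula combine correctly and the curvature comparison actually applies to the trace over an orthonormal frame in $T_{x_0}M$. Once this convexity step is in hand, the rest is bookkeeping.
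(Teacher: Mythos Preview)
The paper does not supply its own proof of this lemma; it merely cites \cite{SchoenYau79} and refers the reader to \cite[Lemma 3.22]{GS1} for details. Your proposal is essentially the standard Schoen--Yau argument and is correct in outline: the composition formula reduces the question to the convexity of the distance function $F$ on the product $N\times N$, and the heat-flow equations convert the tension term into $\partial_t\phi$.

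One small point deserves care. In your regularization step you assert that $F_\varepsilon=\sqrt{\varepsilon^2+d_N^2}$ ``inherits convexity from $d_N^2$''. This is not automatic, since $t\mapsto\sqrt{\varepsilon^2+t}$ is concave. What actually makes it work is that on an NPC target one has $\nabla^2 d\geq 0$ (not merely $\nabla^2 d^2\geq 0$) away from the diagonal, together with the identity $|\nabla d|^2=2$ on $N\times N$; these combine to give $\nabla^2(d^2)\geq \tfrac{1}{2d^2}\,\nabla(d^2)\otimes\nabla(d^2)$, which is exactly the inequality needed for $\nabla^2 F_\varepsilon\geq 0$. At the diagonal one checks directly using smoothness of $d^2$. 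With that clarification your argument goes through, and it coincides with the approach the paper is implicitly invoking.
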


\noindent \textit{Remark.} A special case of the above lemma is when $v$ or $w$ is a harmonic map on $\Omega$, independent of $t$, i.e. a \textit{stationary} solution of \eqref{heat1}.

\vspace{.05in}

\noindent Recall the following Parabolic Maximum principle for manifolds with boundary:

\begin{prop}\label{boundary}
Let $M$ be a compact manifold possibly with boundary $\partial M$, and let $u:M \times [0,\infty) \to \mathbb{R}$ be a subsolution of the heat equation on $M$.
Then for any $T>0$,  \[\underset{M\times [0, T)}{\mathrm{sup}}u=\underset{M\times \{0\}\cup\partial{M}\times[0, T)} {\mathrm{sup}}u.\]

(The domain of the supremum on the RHS is called the ``parabolic boundary".) 

\end{prop}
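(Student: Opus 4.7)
The plan is the classical perturbation argument for the parabolic maximum principle, adapted to a compact Riemannian manifold with boundary. First I fix any $T' \in (0, T)$ and work on the compact domain $M \times [0, T']$, on which continuity of $u$ guarantees that the supremum is attained. For each $\epsilon > 0$ I introduce the perturbation
\[ v_\epsilon(x,t) := u(x,t) - \epsilon t, \]
which turns the weak inequality into a strict one:
\[ \left(\Delta_g - \tfrac{\partial}{\partial t}\right) v_\epsilon \;=\; \left(\Delta_g - \tfrac{\partial}{\partial t}\right) u + \epsilon \;\geq\; \epsilon \;>\; 0 \]
pointwise on $M \times [0, T']$.

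Next I claim that the supremum of $v_\epsilon$ over $M \times [0, T']$ is attained on the parabolic boundary $\mathcal{P} := M \times \{0\} \cup \partial M \times [0, T']$. Suppose for contradiction it is attained at a point $(x_0, t_0)$ with $x_0$ in the interior of $M$ and $t_0 > 0$. Working in normal coordinates around $x_0$, the Laplacian at the center reduces to $\sum_i \partial_{ii}^2$, and the second-derivative test for a local spatial maximum gives $\Delta_g v_\epsilon(x_0, t_0) \leq 0$. For the time derivative: if $t_0 \in (0, T')$ the map $t \mapsto v_\epsilon(x_0, t)$ has an interior maximum at $t_0$, forcing $\partial_t v_\epsilon(x_0, t_0) = 0$; if $t_0 = T'$ then $t \mapsto v_\epsilon(x_0, t)$ attains its maximum at the right endpoint, which forces $\partial_t v_\epsilon(x_0, t_0) \geq 0$. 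In either case $(\Delta_g - \partial_t) v_\epsilon(x_0, t_0) \leq 0$, contradicting the strict inequality above. Hence $\sup_{M \times [0, T']} v_\epsilon = \sup_{\mathcal{P}} v_\epsilon$.

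Since $u - \epsilon T' \leq v_\epsilon \leq u$ on $M \times [0, T']$, combining with the previous step gives
\[ \sup_{M \times [0, T']} u \;\leq\; \sup_{M \times [0, T']} v_\epsilon + \epsilon T' \;=\; \sup_{\mathcal{P}} v_\epsilon + \epsilon T' \;\leq\; \sup_{\mathcal{P}} u + \epsilon T'. \]
Letting $\epsilon \to 0^+$ yields $\sup_{M \times [0, T']} u \leq \sup_{\mathcal{P}} u$, and the reverse inequality is immediate from $\mathcal{P} \subset M \times [0, T']$. Taking the supremum as $T' \nearrow T$ promotes the identity to the half-open interval $[0, T)$.

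The only technical subtlety is the temporal boundary case $t_0 = T'$, where the one-sided maximum argument gives only $\partial_t v_\epsilon \geq 0$ rather than equality; this still suffices because it is combined with $\Delta_g v_\epsilon \leq 0$ and a \emph{strict} rather than weak differential inequality, which is precisely the purpose of the $-\epsilon t$ perturbation. The required regularity is $C^2$ in the spatial variables and $C^1$ in time, which matches the usual meaning of ``subsolution'' as used elsewhere in the paper (e.g.\ in Lemma \ref{sy}). No completeness or curvature hypothesis on $M$ is needed beyond compactness, and the boundary $\partial M$ enters only through the definition of $\mathcal{P}$.
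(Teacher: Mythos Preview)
Your proof is correct: it is the standard perturbation argument for the parabolic maximum principle on a compact manifold with boundary, and all the steps (the $-\epsilon t$ shift to obtain a strict differential inequality, the second-derivative test in normal coordinates at an interior spatial maximum, the one-sided time-derivative argument at $t_0=T'$, and the limits $\epsilon\to 0$, $T'\nearrow T$) are handled properly.

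There is nothing to compare against in the paper itself: Proposition~\ref{boundary} is simply \emph{recalled} there as a classical fact, without proof. Your write-up supplies exactly the textbook argument one would cite for it.
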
 

\vspace{.05in}

For \textit{non-compact} manifolds, we note the following version of the parabolic maximum principle; for a proof, see \cite[Lemma 2.1]{JiapingWang} (where it is attributed to Li) or \cite[pg. 273]{MR2274812}.

\begin{prop}\label{mpr}
Let $(M,g)$ be a complete Riemannian manifold.
If $G(x,t)$ is a weak subsolution of the heat equation defined on $M\times [0,T]$ and $G(x,0)\leq 0$ for any $x\in M$, then $G(x,t)\leq 0$ for $(x,t)\in M\times [0,T]$ provided $\int_{0}^{T}\int_{M} e^{-cr^2(x)}G^2(x,t)dxdt<\infty$ for  some $c>0.$ 
\end{prop}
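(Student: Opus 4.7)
My plan is to use the weighted $L^2$-energy method of Karp--Li type; the conclusion $G\leq 0$ reduces to showing that $G^+:=\max\{G,0\}$ vanishes identically. Since $G(\cdot,0)\leq 0$ we have $G^+(\cdot,0)\equiv 0$, and an application of Kato's inequality (justified via the regularization $G^+ = \lim_{\varepsilon\to 0}\bigl((G^2+\varepsilon^2)^{1/2}-\varepsilon\bigr)$) shows that $G^+$ is itself a nonnegative weak subsolution of the heat equation on $M\times[0,T]$.

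The central device is a time-dependent Gaussian weight $\phi(x,t) := r^2(x)/[4(T_0-t)]$ for a suitable $T_0>T$, together with a Lipschitz radial cutoff $\eta_R$ supported in the ball $B_{2R}(o)$, identically $1$ on $B_R(o)$, with $|\nabla\eta_R|\leq 2/R$. Define the weighted energy
\[
F_R(t) := \int_M (G^+)^2(x,t)\,\eta_R^2(x)\,e^{-\phi(x,t)}\,d\mu_g(x).
\]
Testing the distributional inequality $\partial_t G^+ \leq \Delta_g G^+$ against $2G^+\eta_R^2 e^{-\phi}$ and applying Green's identity produces four terms: a ``good'' term $-2\int|\nabla G^+|^2 \eta_R^2 e^{-\phi}$; a gradient-drift term $2\int G^+(\nabla G^+)\!\cdot\!(\nabla\phi)\,\eta_R^2 e^{-\phi}$; a cutoff-boundary term of size $O(R^{-2})\int_{B_{2R}\setminus B_R}(G^+)^2 e^{-\phi}$; and a time-derivative piece $-\int(G^+)^2\eta_R^2 \phi_t e^{-\phi}$ arising from differentiating $e^{-\phi}$ in $t$. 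The Davies identity $\phi_t = |\nabla\phi|^2$ (which uses $|\nabla r|\leq 1$) is the key algebraic fact: after absorbing the drift term via a Cauchy--Schwarz with sharp constant, the residual $|\nabla\phi|^2(G^+)^2$ is exactly cancelled by $\phi_t(G^+)^2$.

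The result is the clean differential inequality
\[
\frac{d}{dt}F_R(t) \;\leq\; \frac{C}{R^2}\int_{B_{2R}\setminus B_R}(G^+)^2(x,t)\,e^{-\phi(x,t)}\,d\mu_g(x).
\]
Choosing $T_0>T$ so that $1/[4(T_0-T)]>c$ ensures $\phi(x,t)\geq c\,r^2(x)$ on all of $M\times[0,T]$. Integrating from $0$ to $t\leq T$ and using $F_R(0)=0$ gives
\[
F_R(t)\;\leq\;\frac{C}{R^2}\int_0^T\!\!\int_M e^{-c\,r^2(x)}G^2(x,s)\,d\mu_g(x)\,ds,
\]
which tends to $0$ as $R\to\infty$ by the integrability hypothesis. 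Monotone convergence ($\eta_R^2\nearrow 1$) then forces $\int_M(G^+)^2(\cdot,t)\,e^{-\phi(\cdot,t)}\,d\mu_g=0$, so $G^+(\cdot,t)\equiv 0$ almost everywhere; continuity of weak subsolutions under parabolic regularity upgrades this to $G^+\equiv 0$ pointwise on $M\times[0,T]$, i.e.\ $G\leq 0$.

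The main obstacle is the non-smoothness of the distance function $r(x)$: the integration-by-parts is only formal unless one regularizes. The standard fix is to replace $r$ by a smooth approximant $r_\varepsilon$ with $|\nabla r_\varepsilon|\leq 1+\varepsilon$ (which exists on any complete Riemannian manifold by convolution or partition-of-unity arguments) and send $\varepsilon\to 0$ at the end. A secondary technicality is that the chosen weight $\phi$ satisfies the Davies identity together with $\phi\geq c r^2$ only when $T_0-T$ is strictly smaller than $1/(4c)$; if $T>1/(4c)$ one must iterate the argument on successive subintervals of length less than $1/(4c)$ covering $[0,T]$, using the conclusion $G\leq 0$ on each completed subinterval as the new initial datum for the next.
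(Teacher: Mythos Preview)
The paper does not give its own proof of this proposition; it simply cites \cite[Lemma~2.1]{JiapingWang} (attributed there to Li) and \cite[p.~273]{MR2274812}. Your weighted-$L^2$ energy argument with the time-dependent Gaussian weight and radial cutoff is exactly the Karp--Li method that underlies those references, so you have essentially reconstructed the intended proof.

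One minor slip worth fixing: the condition $1/[4(T_0-T)]>c$ you state controls $\phi$ at $t=T$, but since $\phi(x,t)=r^2(x)/[4(T_0-t)]$ is \emph{increasing} in $t$, its minimum over $t\in[0,T]$ occurs at $t=0$, and the correct requirement for $\phi\geq c\,r^2$ on all of $M\times[0,T]$ is $1/(4T_0)\geq c$, i.e.\ $T_0\leq 1/(4c)$. Combined with $T_0>T$ this forces $T<1/(4c)$ for a single pass; you correctly note the need to iterate on short subintervals when $T$ is larger, so the argument goes through.
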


We have used the above statement in \cite{GS1}; in this paper we shall need the following consequence: 

\begin{cor}\label{cor:maxp}
Let $\psi(x,t)$ be a subsolution of the heat equation on $(M\setminus K)\times[0, T]$ for a compact set $K$. Suppose $\psi$ satisfies the above growth estimate. 
Set $\Gamma_T= (M\setminus K)\times\{0\}\cup \partial K\times [0,T]$. 
Then
\[\sup _{{\overline{(M\setminus K)}}\times[0, T]} \psi=\sup _{\Gamma_T} \quad \psi.\]
\end{cor}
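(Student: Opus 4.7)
The plan is to reduce the statement to Proposition \ref{mpr} (the maximum principle on a complete manifold \emph{without} boundary) by subtracting the supremum constant and extending by zero across the compact set $K$.

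First, I would set $c := \sup_{\Gamma_T} \psi$; if $c = +\infty$ the claim is trivial, so assume $c < \infty$. Define $\phi := (\psi - c)^+$ on $(M \setminus K) \times [0,T]$. Since $\psi - c$ is a subsolution of the heat equation (constants being solutions) and positive parts of subsolutions are again subsolutions in the distributional sense, $\phi$ is a nonnegative weak subsolution on $(M \setminus K) \times [0,T]$. By the very choice of $c$ together with the definition of $\Gamma_T$, the function $\phi$ vanishes on $\partial K \times [0,T]$ and at $t = 0$. Next, I would extend $\phi$ by zero to a function $\tilde\phi$ on all of $M \times [0,T]$ by declaring $\tilde\phi \equiv 0$ on $K \times [0,T]$; continuity of $\tilde\phi$ across $\partial K$ is immediate since $\phi$ already vanishes on $\partial K \times [0,T]$.

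The central technical step will be verifying that $\tilde\phi$ remains a weak subsolution of the heat equation on all of $M \times [0,T]$. On the interior of $K$ it is identically zero (hence a solution), on $M \setminus K$ it equals the subsolution $\phi$, and across $\partial K$ the distributional inequality holds because the one-sided outward (into-$K$) normal derivative of $\phi$ at $\partial K$ is $\leq 0$, giving the correct sign in the boundary term produced by integrating against a nonnegative compactly supported test function. An alternative is to regularize, replacing $\phi$ by $(\phi - \delta)^+$ which vanishes in an actual neighborhood of $\partial K$, so that its zero-extension is trivially a subsolution, and then letting $\delta \to 0$.

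Finally, I would verify the exponential-growth integrability required by Proposition \ref{mpr}: from $0 \leq \tilde\phi \leq |\psi| + |c|$ on $M \times [0,T]$, together with the growth estimate assumed on $\psi$ and the finiteness of $\int_M e^{-c_0 r^2(x)}\,dV$ on our complete Riemannian manifold, one concludes $\int_0^T \int_M e^{-c_0 r^2(x)} \tilde\phi^2\,dx\,dt < \infty$. Since $\tilde\phi(\cdot,0) \leq 0$ everywhere on $M$, Proposition \ref{mpr} then yields $\tilde\phi \leq 0$ on $M \times [0,T]$; combined with $\tilde\phi \geq 0$ this forces $\phi \equiv 0$, i.e.\ $\psi \leq c$ on $(M \setminus K) \times [0,T]$, and passing to the closure by continuity gives the claimed equality of suprema. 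The main obstacle I anticipate is the distributional verification that zero-extension preserves the subsolution property across $\partial K$, which requires either a careful boundary integration-by-parts argument or the regularization trick above.
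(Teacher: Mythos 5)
Your proposal is correct and takes essentially the same approach as the paper: subtract the supremum over $\Gamma_T$, take the positive part, extend by zero across $K$, verify that the zero-extension remains a weak subsolution and satisfies the growth integrability, and then invoke Proposition \ref{mpr}. If anything, you are more careful than the paper at the one delicate point (why the zero-extension is still a weak subsolution across $\partial K$), offering both a boundary-sign argument and a regularization by $(\phi-\delta)^+$; the paper simply asserts this using the vanishing of $\phi$ on $\partial K\times[0,T]$.
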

\begin{proof}
   Set $v(x,t)=\psi(x,t)-\sup _{\Gamma_T} \psi $.
\[\phi(x,t)=
\begin{cases}
	\text{max}(v(x,t),0)&\text{on}\quad M\setminus K\times[0,T]\\
	0&\text{on}\quad K\times [0,T]
\end{cases}\]  Note that $\phi$ is a subsolution of heat equation because $v(x,t)=0$ on $\partial K\times [0,T]$. Furthermore, $\text{max}(v([x],t),0)$ is a weak subsolution of heat equation on $M\setminus K$. Therefore $\phi$ is a weak subsolution on $M\times [0,T]$. Now \[\int_{0}^{T}\int_{M}|\phi|^2e^{-cr^2([x])}d[x]dt<\infty,\quad\text{as}\quad\int_{0}^{T}\int_{M}|\psi|^2e^{-cr^2(x)}dxdt<\infty.\] Therefore by Proposition \ref{mpr}, $\phi=0$ and we are  done. 
\end{proof}

\vspace{.05in}

Finally, as in \cite{GS1} we shall need the Moser's Harnack inequality (see \cite{Moser}) for subsolutions of the heat equation:

\begin{prop}[\cite{WH}, Lemma 1.3, pg. 269]\label{mh}
Let  $0<R<1$ and $v\in C^{\infty}(B_R(x_0)\times[t_0-R^2,t_0])$ be a non-negative function on $(M,g)$ satisfying\[\left(\Delta_g-\frac{\partial}{\partial t}\right)v\geq-C v,\] for some $C>0$ on $ B_R(x_0)\times[t_0-R^2,t_0]$. Then there exists a constant $C_2>0$ such that\[v(x_0,t_0)\leq C_2R^{-(m+2)}\int_{t_0-R^2}^{t_0}\int_{B_R(x_0)}v(y,s)dvol_g(y) ds,\] where $m$ is the dimension of the ball $B_R(x_0)$.
\end{prop}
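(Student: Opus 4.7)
The plan is to prove this parabolic mean-value inequality via Moser's iteration for non-negative subsolutions of the heat equation. First, I would reduce to $C=0$ by setting $w := e^{-Ct}v$; a direct computation gives $(\Delta_g - \partial_t)w = e^{-Ct}\bigl((\Delta_g - \partial_t)v + Cv\bigr)\geq 0$, so $w$ is a genuine non-negative subsolution, and since $R<1$ the factor $e^{Ct}$ is pinned between $e^{-C}$ and $e^C$ on $[t_0-R^2,t_0]$. It therefore suffices to prove the stated bound for $w$, the passage back to $v$ costing only a multiplicative constant $e^C$. The next step is a Caccioppoli-type estimate: for any $p\geq 1$ and a smooth cutoff $\eta(x,t)$ supported in a concentric parabolic subcylinder of $Q:=B_R(x_0)\times[t_0-R^2,t_0]$, I multiply the subsolution inequality by $\eta^2 w^{p-1}\geq 0$ and integrate by parts in both space and time; after a Cauchy--Schwarz absorption this yields
\[
\sup_t\int_{B_R}\eta^2 w^p\, dvol_g + \int\!\!\int \bigl|\nabla_g(\eta w^{p/2})\bigr|^2 dvol_g\, dt \;\lesssim\; p\int\!\!\int w^p\bigl(|\nabla_g\eta|^2+|\partial_t\eta|\bigr)dvol_g\, dt.
\]

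Feeding $\eta w^{p/2}$ into the local Sobolev inequality (valid on $B_R$ since $R<1$ is below the injectivity radius, where one works in a chart with $g$ comparable to Euclidean) and interpolating the resulting $L^{2m/(m-2)}$-in-space bound with the $\sup_t L^2$-in-space bound via H\"older gives the key reverse-H\"older step
\[
\|w\|_{L^{p\kappa}(Q_1)} \;\leq\; \bigl(C\,(R_0-R_1)^{-2}\bigr)^{1/p}\,\|w\|_{L^p(Q_0)},\qquad \kappa := 1 + \tfrac{2}{m}>1,
\]
for nested concentric parabolic cylinders $Q_1\subset Q_0\subset Q$. I then iterate with $p_0=1$, $p_k=\kappa^k$, and radii $R_k=\tfrac{R}{2}(1+2^{-k})$ so that the cylinders $Q_k=B_{R_k}(x_0)\times[t_0-R_k^2,t_0]$ shrink to the half-size cylinder $Q_\infty$ at $(x_0,t_0)$. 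Because the series $\sum_k \kappa^{-k}\log(C\cdot 4^k R^{-2})$ converges (geometric decay of $\kappa^{-k}$), the telescoping product of constants yields
\[
\sup_{Q_\infty} w \;\leq\; C(m,C)\,R^{-(m+2)}\int_{t_0-R^2}^{t_0}\!\!\int_{B_R(x_0)} w(y,s)\, dvol_g(y)\, ds,
\]
with the exponent $m+2$ forced by the parabolic volume $\asymp R^{m+2}$. Since $(x_0,t_0)\in Q_\infty$, undoing the substitution $w=e^{-Ct}v$ concludes the proof.

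The main technical hurdle is controlling the iterated constants across the Moser scheme: the cutoffs $\eta_k$ between $Q_k$ and $Q_{k+1}$ must be chosen with $|\nabla\eta_k|^2+|\partial_t\eta_k|\lesssim 4^k/R^2$, and then the product $\prod_k(C\cdot 4^k R^{-2})^{\kappa^{-k}}$ must be shown to converge to a finite constant depending only on $m$ (and $C$), which is precisely where the geometric decay of the exponents $\kappa^{-k}$ enters crucially. A secondary issue is verifying the Sobolev inequality on $B_R$ with uniform constants: since $R<1$ is below the injectivity radius of any fixed compact region, one passes to normal coordinates on $M$ where the Riemannian volume form and gradient are comparable to their Euclidean counterparts with constants depending only on the local geometry, which is harmless in this paper since the lemma is only ever invoked on small balls in a fixed compact region of the image of the heat flow.
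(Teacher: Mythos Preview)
The paper does not actually prove this proposition; it is quoted verbatim from the cited reference \cite{WH} (Lemma~1.3, pg.~269) and used as a black box in the proof of Lemma~\ref{ebound}. So there is no ``paper's own proof'' to compare against.

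That said, your sketch is the standard Moser iteration argument and is essentially correct. The reduction $w=e^{-Ct}v$ to a genuine subsolution is right, and the exponent $m+2$ indeed falls out of the parabolic scaling. One small technical wrinkle: starting the iteration at $p_0=1$ requires care, since testing against $\eta^2 w^{p-1}=\eta^2$ does not directly produce a gradient term for $w$; in practice one either starts at $p_0=2$ and interpolates back, or works with $w+\varepsilon$ and sends $\varepsilon\to 0$. Also, your remark about the Sobolev constant being uniform ``below the injectivity radius'' is the right spirit but slightly imprecise for this paper's application: the domain metric $g$ (Definition~\ref{metg}) is complete and non-positively curved, so one has uniform local Sobolev constants via volume comparison rather than via a compactness argument. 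Neither point is a genuine gap in your outline.
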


\vspace{.05in}

\section{Proof of Theorem \ref{thm:main}}

\subsection{Defining the initial map $u_0$}

Throughout, let $X$ denote the  hyperbolic  marked and bordered surface and let $\hat{X}$ denote its conformal completion.

\subsubsection{A pre-initial harmonic map $h$}

Recall that $\hat{\rho} = (\rho, \beta)$ is a non-degenerate type-preserving framed $\pslc$-representation. For a fixed ideal triangulation $T$ of $\sm$, we choose a signing $\epsilon:\mathbb{P} \to \{\pm 1\}$ such that $\hat{\rho}$ admits Fock-Goncharov coordinates with respect to the signed triangulation $(T, \epsilon)$ (see \cite[Theorem 9.1]{All-Bri}). Here, the signing modifies the framing as follows: if the monodromy around a puncture in $\mathbb{P}$ is loxodromic, then a framing has to necessarily assign it to one of its two fixed points; if $\epsilon(p)=-1$ we switch to the other fixed point. (See \cite[\S2.6]{GupWild} for a discussion.) 

We replace each (complex) Fock-Goncharov parameter by its modulus and let 
$\rho_0:\pi_1(\hat{X}) \to \pslr$ be the corresponding Fuchsian representation (see \S2.2), with framing $\beta_0$. We shall refer to the marked-and-bordered hyperbolic surface corresponding to $\rho_0$ as $X_0$; note that each crown or cusp or geodesic boundary end of $X$ corresponds to a crown or cusp or geodesic boundary end, respectively, of $X_0$.  The ideal triangulation $T$ determines a geodesic lamination on $X_0$, comprising finitely many geodesic lines between the marked-points and cusps, or spiralling onto the geodesic boundary components; switching the signing $\epsilon$ at a geodesic boundary component changes the direction of the spiralling. 

\vspace{.05in}

We start with the following $\rho_0$-equivariant harmonic map $h: \mathbb{H}^2 \to \mathbb{H}^3$ asymptotic to $\beta_0$ that exists by Theorem \ref{thm:all}. Namely, in the statement of the theorem, take the crowned hyperbolic surface $Y = X_0$,  lift the resulting harmonic map $F:\hat{X} \to Y \setminus \partial Y$ to a map between the universal covers $\widetilde{F}:\HH^2 \to \HH^2$ and then define $h = \iota \circ \widetilde{F}$ where $\iota:\HH^2 \to \HH^3$ is a totally geodesic embedding, say as the equatorial plane in the ball model of $\HH^3$. Here $\text{Hopf}(h)$ descends to a meromorphic quadratic differential on $\hat{X}$ having 
\begin{itemize}
    \item poles of order at most two at the punctures arising from $\mathbb{P}$, 
    \item poles of order $n\geq 3$ for each puncture arising from a crown end of $X$ having $(n-2)$ boundary cusps, and
    \item the prescribed principal parts at each pole. 
\end{itemize}  

Here, the prescribed principal part at each pole of higher order $n>2$ is compatible with the corresponding $(n-2)$-chain arising by considering the set $M_\infty$ of images under the framing $\beta$ of the lifts of marked points on the corresponding boundary component of $\sm$. (Here, ``compatible" is in the sense defined in  Definition \ref{defn:comp2}.) Such a principal part is also compatible with the straightening of $\mathcal{C}$ (see Definition \ref{defn:stn}), which by the definition of $X_0$ is precisely the lift of the corresponding crown end of $X_0$.

\vspace{.05in}

Note that the domain can be identified as the universal cover $\mathcal{H}$ of $\hat{X}$ (see Remark (ii) following Definition \ref{ccomp}) instead of the hyperbolic plane; throughout, we shall use a different conformal metric $g$ on the domain as defined below:

\begin{defn}[Metric $g$ on domain]\label{metg} The conformal metric $g$ on $\mathcal{H}$  is obtained by modifying the singular-flat metric induced by the Hopf differential  $q= \text{Hopf}(h)$. This modification shall be done at the level of the quotient surface $\hat{X}$, in particular on neighborhoods of each zero of $q$, and each puncture corresponding to pole of order one of $q$, such that the metric
\begin{itemize}
    \item[(a)] is the hyperbolic cusp metric in a (smaller) neighborhood of each order-one pole, 
    \item[(b)]is the flat $4q$-metric in the complement of the neighborhoods, 
    \item[(c)] is  $C^2$-smooth, and 
    \item[(d)] has non-positive Gaussian curvature. 
\end{itemize}

\end{defn}

\noindent We refer the reader to the Appendix for details of this metric interpolation.

\vspace{.05in}

In what follows, we shall modify the map $h$ to define the desired $\rho$-equivariant initial map $u_0: \HH^2 \to \HH^3$.

\subsubsection{Defining $u_0$ on lifts of crown ends}
We first describe how to modify $h$ and define $u_0$ on the lifts of the crown  ends of $\hat{X}$. 

For each such lift, this modification can be thought of as a $\mathbb{Z}$-equivariant version of the construction in \cite[\S3.1.3]{GS1}. In what follows, we shall try to follow the notation there, and we refer to it for details. 

Recall that the conformal metric on $\hat{X}$ near the $j$-th puncture is the metric induced by the Hopf differential of $u_0$, that has a pole of order $n_j$ at the puncture. Thus, this metric comprises $(n_j-2)$ horizontal and vertical half-planes,  arranged in a cyclic order around the puncture. (See \S 3 of \cite{GS1} for a description.) 
In the universal cover, each connected component of a lift of this neighborhood is a $\mathbb{Z}$-invariant chain of alternating horizontal and vertical half-planes, each intersecting with the next along a quarter-plane. 

 Each such chain $\mathcal{C}$ maps equivariantly to a chain $\mathcal{G}$ of hyperbolic geodesics that is a lift of the boundary geodesics of the corresponding crown end of $X_0$. Moreover, by Proposition \ref{prop:est}, we know that each horizontal half-plane  $H_i$ of $\mathcal{C}$ maps to a neighborhood of a geodesic, and each vertical half-plane $C_i$ to a neighborhood of a cusp bounded by two successive geodesics of $\mathcal{G}$, which we denote by $\tilde{C}_i$.  Here, for ease of notation, we have fixed an index $i \in \mathbb{Z}$; this shall also match with the notation employed in  \cite[\S3.1.3]{GS1}. 

 Recall that the framed representation $(\rho_0,\beta_0)$ is obtained by replacing each Fock-Goncharov parameter of $\hat{\rho}$ (which, for $\pslc$, is the shear-bend parameter of the corresponding edge of the ideal triangulation)  by its modulus. Reversing this, we can recover $\hat{\rho}$ by bending the totally-geodesic copy $P$ of $\HH^2$ preserved by the image of $\rho_0$, along the edges of $T$, which one can realize as a disjoint collection of geodesic lines on $P$. After bending, the original chain of geodesics $\mathcal{G}$ that lies on $P$ deforms to a chain of geodesics $\mathcal{G}^\prime$ in $\HH^3$ whose endpoints are precisely the image under $\beta$ of the (lifts of the) marked points corresponding to this end. 

If the planar cusped region $\tilde{C}_i$ in $P$  is between geodesic lines $\gamma_i$ and $\gamma_{i+1}$, let $\hat{C}_i$ be the planar cusped region lying in a totally-geodesic plane  in $\HH^3$ containing the two corresponding geodesics of $\mathcal{G}^\prime$, such that $\hat{C}_i$ is isometric to $\tilde{C}_i$. 

 To construct the map $u_0$, we first map each vertical half-plane $C_i$  to $\hat{C}_i$ in $\HH^3$ via a map that is the composition of the restriction of the harmonic map $h\vert_{C_i}$ with the isometry from  $\tilde{C}_i$ to $\hat{C}_i$. Note that post-composition with an isometry preserves harmonicity. 
 
 Next, we define $u_0$ on each horizontal half-plane as follows: note that $u_0$ is already defined on $C_i\cap H_i$ and $C_{i+1} \cap H_i$, and maps into two totally-geodesic planes $P_i,P_{i+1}$  that contain the cusps  $\hat{C}_i$ and  $\hat{C}_{i+1}$  respectively.  Here, $P_{i+1}$  can be obtained by applying an elliptic rotation  to $P_i$, where the rotation has an angle (say $\theta_0$) and with axis the geodesic line $\gamma_i$ that is common to the two successive cusps. Note that $C_i\cap H_i$ and $C_{i+1} \cap H_i$ defines two quarter-planes in $H_i$ (see \cite[Figure 4]{GS1}) with a half-strip, denoted by $D_i$, inbetween. On $D_i$, the map $u_0$ is defined to be an interpolation that rotates the original map $h$ by an angle that varies between $0$ on the left quarter-plane to $\theta_0$ on the right quarter-plane -- see equations (3.1) and (3.2) of \cite{GS1}. This construction can be done preserving the $\mathbb{Z}$-equivariance, such that the resulting map is $C^2$-smooth (see \cite[\S3.1.3]{GS1}).

\subsubsection{Completing the construction}
Next, we describe how to define the initial map $u_0$ in the lifts of the other types of ends of $\hat{X}$, namely a cusp and a cylinder-end (see Definition \ref{ccomp}) that arise from the interior marked points $\mathbb{P}$.  

\begin{itemize}
    \item[(i)] For a cusp end, each component of its lift to the universal cover $(\HH^2, g)$ is isometric to a horodisk in $\mathbb{H}^2$. The lift under the full Fuchsian group $\Gamma$ (the image of $\rho_0$) then comprises a countable collection of disjoint horodisks that can be thought of as embedded in the totally-geodesic plane $P$ preserved by $\Gamma$.  Since the representation $\rho$ is type-preserving, the image $\rho(\gamma)$ where $\gamma$ is a loop around the puncture is parabolic. Hence for each such horodisk lift in $P$, one can choose a corresponding isometric horodisk in a totally-geodesic plane  $\HH^3$  preserved by $\rho(\gamma)$.   The map $u_0$ on each horodisk in the lift of the cusp end is defined to be the restriction of the harmonic map $h$ post-composed by this isometric embedding. Once again, $u_0$ is harmonic on this lift. 

    \item[(ii)] For an end arising from the completion of a geodesic boundary component of $X$, each component of a lift to the universal cover is isometric to a Euclidean half-plane. Since $\rho$ is type-preserving, $\rho(\gamma)$ is a loxodromic element where $\gamma$ is the corresponding peripheral element, and has a geodesic axis.  Let $c_\theta$ be the  map that collapses the half-plane to its boundary line via the map  $(x,y) \mapsto x - y\tan\theta$, where $ae^{i\theta}$ is the principal part of the corresponding order-two pole. (See \cite[Definition 2.5]{DGT}.)  The map on each lift  is defined to be the map $c_\theta$  post-composed by an isometric embedding to the geodesic axis. Since the collapsing map is harmonic, so is the map $u_0$. 
    Note that our choice of collapsing map $c_\theta$ is such that its Hopf differential descends to a meromorphic quadratic differential with a pole of order two on $\mathbb{D}^\ast$ having the prescribed principal part $ae^{i\theta}$. 
\end{itemize}

Having defined $u_0$ on the lifts of neighborhoods $\{ U_i\}_{1\leq i\leq k + \lvert \mathbb{P} \rvert }$ of the  $k + \lvert \mathbb{P} \rvert $ punctures of $\hat{X}$, it remains to define it on the lift of their complement, i.e on the compact surface  $\bar{X} = \hat{X} \setminus \bigcup\limits_{i=1}^{k+\lvert \mathbb{P} \rvert}  U_i$ with boundary. Defining a $C^2$-smooth map from the universal cover of $\hat{X}$ to $\mathbb{H}^3$ that is $\rho$-equivariant is equivalent to constructing  a $C^2$-smooth section of the $\HH^3$-bundle over $\hat{X}$ associated to the representation $\rho$. So far, we have defined a section of this bundle over $\bigcup\limits_{i=1}^{k+\lvert \mathbb{P} \rvert}  U_i$; since the bundle is trivial over  the punctured surface $\hat{X}$, there is no obstruction in extending this to a $C^2$-smooth section of the entire surface. 
This defines the initial map $u_0:(\HH^2,g) \to \HH^3$; note that from our construction in the lifts of neighborhoods of the punctures, $u_0$ is asymptotic to the framing $\beta$. 

\vspace{.1in} 

Moreover, from our construction, we obtain the following estimates on the tension field and energy density of the initial map:

\begin{lem}\label{u0bd} 
For the $\rho$-equivariant map $u_0:(\HH^2,g) \to \HH^3$ constructed above, the norm of the tension field $\lvert \tau(u_0)\rvert \in L^\infty(\HH^2)$, and the energy density $e(u_0) \in L^\infty(\HH^2)$. Moreover,  $\lvert \tau(u_0)\rvert$ descends to a function $f: \hat{X} \to \mathbb{R}$ that 
\begin{itemize}
    \item is identically zero in neighborhoods of punctures  arising from the punctures $\mathbb{P}$, and 
    \item has an exponential decay in each neighborhood arising from a crown end, that is, $\lvert f(x) \rvert  = O(e^{-\alpha d(x, x_0)})$ where $x_0 \in \hat{X}$ is a choice of a basepoint and $\alpha>0$ is a constant.  
    
    (Here, the distance $d$ is with respect to the conformal metric $g$.) 
\end{itemize} 
\end{lem}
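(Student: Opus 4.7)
The strategy is to analyze $u_0$ region by region according to its piecewise construction in \S3.1, showing that $u_0$ is harmonic (so $\tau(u_0)=0$) on most regions, and that the correction introduced by interpolation on the remaining regions decays exponentially. On each horodisk lift of a cusp end arising from $p\in\mathbb{P}$, the map $u_0$ is $h$ post-composed with an isometric embedding of the horodisk into a totally-geodesic plane of $\HH^3$; on each half-plane lift of a cylinder-end from $\mathbb{P}$, $u_0$ is the linear collapsing map $c_\theta$ post-composed with an isometric embedding to the axis of the corresponding loxodromic $\rho(\gamma)$; on each vertical half-plane $C_i$ in a crown-end lift, and on each of the two quarter-plane pieces of a horizontal half-plane $H_i$, the map $u_0$ coincides with $A_j\circ h$ for a fixed isometry $A_j$. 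In all of these regions, harmonicity is preserved by isometric post-composition, so $\tau(u_0)\equiv 0$ pointwise. On the lift of the compact core $\bar{X}$, the map $u_0$ is a $C^2$-smooth section whose quotient is compact, so $|\tau(u_0)|$ and $e(u_0)$ are bounded there by continuity. In particular $\tau(u_0)$ vanishes on a neighborhood of every puncture arising from $\mathbb{P}$, yielding the first bullet.

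The only remaining regions are the interpolation half-strips $D_i$ inside a horizontal half-plane of a crown-end neighborhood, where $u_0 = R_{\theta(x,y)}\circ \tilde{h}$ with $\tilde{h}:=A_i\circ h$ still harmonic and $R_\theta$ rotation about the common boundary geodesic $a_i$ of the adjacent cusps $\hat{C}_i,\hat{C}_{i+1}$. The key geometric observation, built into the construction, is that $A_i$ takes the horizontal geodesic $\ell_i$ to which $h|_{H_i}$ is asymptotic (by Proposition \ref{prop:est}) onto the rotation axis $a_i$, so $\tilde{h}|_{H_i}$ is horizontally asymptotic to $a_i$ itself. Applying Proposition \ref{prop:est} together with the Canoeing Lemma to $\tilde{h}$ one obtains
\[
\|\tilde{h}(x,y)-\Pi(x,y)\|_{C^1}=O(e^{-\alpha y}),
\]
where $\Pi$ is the horizontal collapsing map with image contained in $a_i$ and $y$ is Euclidean distance from the boundary of $H_i$, which is precisely the direction in which we approach the puncture, so that $y$ is comparable to $d(x,x_0)$. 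Since $R_\theta$ is an isometry fixing $a_i$ pointwise, the distance to $a_i$ is preserved, so $d_{\HH^3}(u_0(x,y),a_i)=O(e^{-\alpha y})$ as well.

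Denoting by $V$ the Killing vector field generating $R_\theta$, the composition formula for the tension field gives, since $\tau(\tilde{h})=0$ and each $R_\theta$ is an isometry, an expression of the schematic form
\[
\tau(u_0) \;=\; (\Delta_g\theta)\,V(u_0) \;+\; |\nabla\theta|_g^{\,2}\,\nabla_V V(u_0) \;+\; 2g^{ab}\theta_a\bigl(\nabla_{(dR_\theta)\partial_b\tilde{h}}V\bigr)(u_0).
\]
Now $|V(p)|=\sinh d(p,a_i)$, the centripetal field $\nabla_V V$ vanishes to second order on $a_i$, and $\nabla_T V$ vanishes whenever $T$ is tangent to $a_i$; each of the three terms on the right is therefore controlled either by $d(u_0,a_i)$ or by the component of $d\tilde{h}$ transverse to $a_i$, and both are $O(e^{-\alpha y})$ by the previous paragraph. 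This gives $|\tau(u_0)(x,y)|=O(e^{-\alpha y})=O(e^{-\alpha d(x,x_0)})$ on each $D_i$, establishing the $L^\infty$ bound on $|\tau(u_0)|$ together with the exponential-decay statement. The energy-density bound $e(u_0)\in L^\infty$ then follows from $|du_0|\leq |\nabla\theta|\,|V(u_0)|+|d\tilde{h}|$ on $D_i$, both summands being bounded (in fact the first decaying), combined with the analogous easy bounds on the other regions.

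The main obstacle is the tension-field estimate on each $D_i$: a naive inspection of the composition formula only gives that $|\tau(u_0)|$ is bounded, not exponentially small. The exponential decay is recovered only by using jointly (a) the identification of the horizontal collapsing geodesic $\ell_i$ with the rotation axis $a_i$, and (b) the structural fact that $\nabla V$ annihilates vectors tangent to its axis; these together ensure that every term in the schematic formula inherits the exponential decay supplied by Proposition \ref{prop:est}.
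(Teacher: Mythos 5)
Your proposal is correct and follows the same region-by-region strategy as the paper: harmonicity of $u_0$ on lifts of the $\mathbb{P}$-ends and on the vertical and quarter-plane pieces of the crown-end lifts, then an exponential-decay estimate for $\tau(u_0)$ on the interpolation half-strips $D_i$. Where the paper delegates the $D_i$ estimates to \cite[Lemma 3.2]{GS1} and \cite[Lemma 3.3]{GS1}, you give a self-contained derivation via the composition formula for $\tau(R_{\theta}\circ\tilde h)$, correctly exploiting the built-in coincidence of the rotation axis $a_i$ with the geodesic to which $\tilde h|_{H_i}$ is asymptotic together with $|V|=\sinh d(\cdot,a_i)$ and the skew structure of $\nabla V$ along the axis; this reproduces the content of the cited lemmas rather than altering the approach.
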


\begin{proof}
Since both the tension field and energy density of a map are unchanged when the map is post-composed by an isometry, the equivariance of $u_0$ ensures that both  descend to functions on $\hat{X}$. Thus, to prove the uniform-boundedness, it suffices to prove a bound on the ends (i.e. neighborhoods of the punctures) of $\hat{X}$ since their complement is compact. 

For the energy density $e(u_0)$, this bound arises from the following computations:
\begin{itemize}
    \item[(i)] For a cusp end, recall from \S3.1.3 that that $u_0$ restricts to an isometric embedding of a horodisk in $\HH^2$ into a horodisk on a  totally geodesic plane in $\HH^3$. Thinking of both the domain and the image as a subset of $\HH^2$ in the upper half-plane model, the map can be thought of as $f_1(x,y)=(x,0,y)$, and the energy density is \[e(f_1)=\lVert df_1\rVert^2=\sum_{i=1}^{3}y^2\left[\left(\frac{\partial f^i_1}{\partial x}\right)^2+\left(\frac{\partial f^i_1}{\partial y}\right)^2\right]\frac{1}{y^2}=2,\].

    \item[(ii)] For an end of $\hat{X}$  arising from a geodesic boundary component of $X$, recall from \S3.1.3 that $u_0$ restricts to a map that collapses an Euclidean half-plane to its boundary, post-composed by an isometric embedding to a geodesic line in $\HH^3$. Thinking of the target $\HH^3$ in the upper half-space model, the map is then $g(x,y) = (0,0, e^{x- c y})$ for some (fixed) constant $c$. This time, the energy density is:
    \[e(g)=\lVert dg\rVert^2=\sum_{i=1}^{3}\left[\left(\frac{\partial g^i}{\partial x}\right)^2+\left(\frac{\partial g^i}{\partial y}\right)^2\right]\frac{1}{e^{2x - 2cy}}=\left[e^{2(x-cy)}+c^2e^{2(x-cy)}\right]\frac{1}{e^{2(x - cy)}}=1+c^2.\] 

    \item[(iii] Finally, for the ends of $\hat{X}$ arising from the crown ends of $X$, recall from \S3.1.2 that the map $u_0$ on the horizontal (or vertical) half-planes of the $4q$-metric is an interpolation of harmonic maps to totally-geodesic copies of hyperbolic planes, and the energy density bound is a consequence of \cite[Lemma 3.2]{GS1}.
\end{itemize}

For the norm of the tension field: from our construction in \S3.1.3, the map $u_0$ is harmonic in the lifts of a neighborhood of a puncture of $\hat{X}$ arising from any point in $\mathbb{P}$ (i.e. corresponding to a cusp or boundary component of $X$), and hence the quotient function $f$ is identically zero there. For each crown end, recall once again that from \S3.1.2 the map on each half-plane of the $4q$-metric is an interpolation of harmonic maps, and by \cite[Lemma 3.3]{GS1} the norm of the tension field restricted to each such half-plane has an exponential decay (with respect to the distance from a chosen basepoint in the $4q$-metric). \end{proof}

\subsection{Uniform energy bound}

Using the initial map $u_0$ just defined, we shall now run the harmonic map heat flow \eqref{heat1};  the solution exists for all time by Theorem \ref{wang}. Here, the hypotheses of Theorem \ref{wang} holds since  the conformal metric $g$ on the domain surface has non-positive curvature everywhere, and Lemma \ref{u0bd} (in particular, the uniform-boundedness of the norm of the tension field) implies the finiteness of $b(x,t)$. 

From our construction, the initial map $u_0:\mathbb{H}^2 \to \mathbb{H}^3$ is $\rho$-equivariant, where $\rho:\pi_1(\hat{X})\to \pslc$, and the action is by deck-translations in the domain (which is the universal cover of $\hat{X}$), and by the image of $\rho$ on the target.

\begin{lem}\label{lem:equiv}
    For each $t>0$, the map $u_t:\mathbb{H}^2 \to \HH^3$ is $\rho$-equivariant.
\end{lem}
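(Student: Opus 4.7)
The plan is to deduce $\rho$-equivariance for all $t > 0$ from the uniqueness half of Theorem \ref{wang} via the standard isometric-pullback trick. For a fixed $\gamma \in \pi_1(\hat{X})$, I would set
\[
v_t(x) := \rho(\gamma)^{-1} \cdot u_t(\gamma \cdot x),
\]
where $\gamma$ acts on the domain by a deck transformation of $\mathcal{H}$. Since deck transformations are isometries of $(\mathcal{H}, g)$ and $\rho(\gamma)^{-1}$ is an isometry of $\HH^3$, the naturality of the tension field under isometric pre- and post-composition gives
\[
\tau(v_t)(x) = \bigl(\rho(\gamma)^{-1}\bigr)_{\ast}\,\tau(u_t)(\gamma\cdot x),
\]
and the chain rule produces the corresponding identity for $\partial_t v_t$. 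Thus $v_t$ is also a solution of \eqref{heat1}, and the $\rho$-equivariance of $u_0$ yields $v_0 = u_0$.

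It remains to verify the hypothesis of the uniqueness clause of Theorem \ref{wang}, namely
\[
\int_0^T \int_{\mathcal{H}} e^{-c r^2(x)}\, b^2(x,t)\, dx\, dt < \infty
\]
for some $c > 0$. By Lemma \ref{u0bd} the tension field of the initial map satisfies $\lvert \tau(u_0)\rvert \leq C$ uniformly on $\mathcal{H}$, and since the heat kernel of a complete Riemannian manifold satisfies $\int_{\mathcal{H}} H(x,y,t)\,dy \leq 1$, we obtain the pointwise bound $b^2(x,t) \leq C^2$. The conformal metric $g$ on $\mathcal{H}$ (flat $4q$-metric away from small neighborhoods of order-one poles, where it is hyperbolic) has at most exponential volume growth, so the Gaussian weight forces $\int_{\mathcal{H}} e^{-cr^2(x)}\,dx < \infty$ for any $c > 0$, and the required integrability follows. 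Uniqueness then gives $v_t = u_t$, which is exactly $u_t(\gamma \cdot x) = \rho(\gamma)\cdot u_t(x)$; since $\gamma$ was arbitrary, $u_t$ is $\rho$-equivariant for every $t > 0$.

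No serious obstacle is expected: the only subtleties are the naturality of the tension field under isometries (standard) and the verification of the integrability hypothesis, both of which are handled by the uniform estimate of Lemma \ref{u0bd} together with the Gaussian weight dominating the domain's volume growth.
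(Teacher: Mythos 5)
Your proof is correct and uses essentially the same mechanism as the paper: both show that pre-composing by $\gamma$ and post-composing by $\rho(\gamma)^{-1}$ (isometries of domain and target respectively) produces another solution of the heat flow with the same initial data $u_0$, and then invoke the uniqueness clause of Theorem \ref{wang}. Your packaging is marginally more direct — you apply uniqueness once to \eqref{heat1} itself, whereas the paper defines an auxiliary flow with initial map $u_0\circ\gamma$ and compares two solutions of it — and you explicitly verify the integrability hypothesis for uniqueness, which the paper handles earlier in \S 3.2 and takes for granted in this proof; neither difference is substantive.
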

\begin{proof}
    This is an immediate consequence of the uniqueness of the flow, that follows from Theorem \ref{wang}. Let $\gamma \in \pi_1(\hat{X})$, and consider the  harmonic map heat flow  with the initial map $u_0 \circ \gamma$:
        \begin{equation}\label{uheat}
        \begin{split}
\frac{\partial u}{\partial t}&=\tau(u(x,t))\\
u(x,0)&=u_0(\gamma.x)  
\end{split}
\end{equation} 
    
    Since $\gamma$ acts by isometries on the domain, it leaves the tension field invariant (see, for instance \cite[pg. 162, Exercise 4.4]{Nishikawa02}). In particular, the tension field of $u_0 \circ \gamma$ is bounded, and the existence (and uniqueness) of \eqref{uheat} also holds for this initial map. Define\[v(x,t)=A(u(x,t)),\]  where $A=\rho(\gamma)$. Then \[\frac{\partial v}{\partial t}=dA_{u_t(x)}\left(\frac{\partial u}{\partial t}\right)=dA_{u_t(x)}(\tau(u(x,t))=\tau(A(u(x,t)))=\tau(v(x,t)),\] where $v(x,0)=\rho(\gamma(u_0(x)))=u_0(\gamma.x)$. Therefore, $v(x,t)$ and $u(\gamma. x,t)$ are solutions of \eqref{uheat} with the same initial condition. By uniqueness $u_t(\gamma.x)=\rho(\gamma)(u_t(x))$ for all $x \in \mathbb{H}^2$.
\end{proof}

\vspace{.05in}

\noindent We also show: 

\begin{lem}\label{ebound} 
There is uniform bound on the energy density $e(u_t)$, that is, there exists a constant $D>0$ such that $e(u_t(x))\leq D$ for all   $(x,t) \in \mathbb{H}^2 \times [0,\infty)$.
\end{lem}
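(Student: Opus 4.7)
The plan is to exploit the $\rho$-equivariance to descend the problem to the punctured surface $\hat{X}$, derive a Bochner-type subsolution inequality for $e(u_t)$, and then apply Moser's parabolic Harnack estimate (Proposition \ref{mh}) together with a cutoff-based local energy estimate.

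First, by Lemma \ref{lem:equiv} the map $u_t$ is $\rho$-equivariant; since the deck group acts by isometries on $(\mathcal{H},g)$ and $\rho(\pi_1(\hat{X}))$ acts by isometries on $\mathbb{H}^3$, both $e(u_t)$ and $|\tau(u_t)|^2$ descend to smooth functions on $\hat{X}\times[0,\infty)$. It therefore suffices to prove a uniform bound on $\hat{X}\times[0,\infty)$. By Definition \ref{metg}(d) the conformal metric $g$ has non-positive Gaussian curvature, and because $g$ is a $C^2$-smooth interpolation among a flat $4q$-piece, hyperbolic cusp pieces (curvature $-1$), and compact transition regions, its Ricci curvature is uniformly bounded below: $\mathrm{Ric}^g\geq -K_0\,g$ for some $K_0\geq 0$. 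Since the target $\mathbb{H}^3$ has sectional curvature $\equiv -1\leq 0$, the standard Bochner computation along the harmonic map heat flow yields
\[\left(\Delta_g-\partial_t\right)e(u_t)\geq |\nabla du_t|^2-C_1\,e(u_t)\qquad\text{and}\qquad \left(\Delta_g-\partial_t\right)|\tau(u_t)|^2\geq 0,\]
for some constant $C_1\geq 0$ depending only on $K_0$. In particular $e(u_t)$ is a weak subsolution of the heat equation modulo a linear term, and $|\tau(u_t)|^2$ is a genuine weak subsolution.

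From the uniform initial bound $|\tau(u_0)|\leq M_0$ (Lemma \ref{u0bd}), the tension-field bound in Theorem \ref{wang} (with the growth hypothesis automatic since $|\tau(u_0)|\in L^\infty$) then gives $|\tau(u_t)|\leq M_0$ uniformly on $\hat{X}\times[0,\infty)$. For the energy density, apply Proposition \ref{mh} directly to the non-negative function $v=e(u_t)$, which satisfies the required $-Cv$-type subsolution inequality. For a fixed radius $R=1$ and any $(x_0,t_0)\in \hat{X}\times[1,\infty)$, this yields
\[e(u_{t_0})(x_0)\leq C_2\int_{t_0-1}^{t_0}\int_{B_1(x_0)}e(u_s)(y)\,dvol_g(y)\,ds.\]
It remains to bound the local space-time integral on the right uniformly in $(x_0,t_0)$. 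This is obtained by a standard cutoff estimate: picking $\varphi$ supported in $B_2(x_0)$ with $\varphi\equiv 1$ on $B_1(x_0)$ and $|\nabla\varphi|_g=O(1)$, and integrating the Bochner identity $\partial_t e=\Delta e-|\nabla du|^2+(\text{curvature terms bounded by }C\cdot e)$ against $\varphi^2$, an application of Cauchy-Schwarz on the boundary term absorbs $\int\varphi^2|\nabla du|^2$ and produces
\[\frac{d}{dt}\int\varphi^2 e(u_t)\,dvol_g\leq C_3\int_{B_2(x_0)}e(u_t)\,dvol_g.\]
Iterating over a sequence of slightly enlarging balls, combined with the uniform bound on volumes of balls of fixed radius in $(\hat{X},g)$ (balls near crown ends sit inside flat half-planes of area $\leq \pi r^2$, balls near cusps have bounded hyperbolic area, and the compact core is finite), the uniform initial bound $e(u_0)\leq D_0$, and the uniform bound on $|\tau|$, gives a uniform-in-$(x_0,t_0)$ bound on $\int_{t_0-1}^{t_0}\int_{B_1(x_0)}e(u_s)\,dvol_g\,ds$. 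Plugging back into the Harnack estimate yields the desired constant $D>0$ with $e(u_t)(x)\leq D$.

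The main technical obstacle is the last step: the local energy estimate has to be carried out uniformly across the non-compact ends of $\hat{X}$. The structural facts that make this work are the uniform Ricci lower bound on $g$, the uniformly bounded volume of $g$-balls of fixed radius (thanks to the specific interpolated geometry described in Definition \ref{metg}), and the exponential decay of $|\tau(u_0)|$ at crown ends provided by Lemma \ref{u0bd}, which prevents the local energy from being driven up by the flow transporting energy inward from the ends.
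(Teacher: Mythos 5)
Your proof follows the paper's overall strategy---Weitzenbock/Bochner inequality combined with the Ricci lower bound on $(\mathcal{H},g)$, Moser's parabolic Harnack estimate (Proposition~\ref{mh}), quadratic area growth of $g$-balls, and the uniform initial bound $e(u_0)\leq D_0$ from Lemma~\ref{u0bd}---but there is a genuine gap at the crucial step of bounding the local space-time energy integral $\int_{t_0-1}^{t_0}\int_{B_1(x_0)}e(u_s)\,dvol_g\,ds$ uniformly in $(x_0,t_0)$. The cutoff estimate you propose, $\frac{d}{dt}\int\varphi^2 e(u_t)\,dvol_g\leq C_3\int_{B_2(x_0)}e(u_t)\,dvol_g$, controls the time derivative of the local energy on $B_1$ only in terms of the local energy on the strictly larger ball $B_2$, and ``iterating over a sequence of slightly enlarging balls'' does not close: at each stage the unknown on one ball is traded for an unknown on a larger one, and Gronwall applied to such a chain yields at best an exponential-in-$t$ bound, not the time-uniform bound you need. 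Your appeal to the uniform bound on $|\tau(u_t)|$ to rescue this is not explained, and it does not obviously help---$|\tau|=|\partial_t u_t|$ controls the speed of the flow, not the spatial gradient of $u_t$. The paper, at exactly this point, invokes the monotonicity of the local energy $E_R(u_t)=\int_{B_R(x_0)}e(u_t)\,dvol_g$ in $t$, which reduces the space-time integral directly to $R^2 E_R(u_0)\leq R^2 D_0\,\mathrm{Area}(B_R)\leq C R^4$. Nothing in your proposal plays the role of this monotonicity input, and without it the Harnack step does not produce a uniform constant.

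A secondary omission: Moser's Harnack estimate only gives information for $t_0\geq R^2$, and your application explicitly restricts to $(x_0,t_0)\in\hat{X}\times[1,\infty)$, so the initial interval $t\in[0,1]$ is never covered. The paper handles this separately by observing that $a(x,t)=e^{-t}e(u_t)$ is a genuine subsolution of the heat equation on the compact manifold with boundary $\overline{B_R(x_0)}$, and applying the parabolic maximum principle (Proposition~\ref{boundary}) to conclude $e(u_t)\leq e\cdot D_0$ there.
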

\begin{proof}
Recall that the universal cover of the conformal completion $\hat{X}$ is $\mathbb{H}^2$ equipped with the conformal metric g, that we denote by $\mathcal{H}$. 
    Fix $x_0\in\mathcal{H}$  and let $B_{R}(x_0)$ denote the (relatively compact) ball of radius $0<R<1$ centered at $x_0$, containing $K$. Since $Ric_g \geq -Ag $, for some $A>0$,   using  Weitzenbock Formula for the energy density function $e(u_t)$ (see \cite[Proposition 4.2 (1) and Corollary 4.3 (1)]{Nishikawa02}), we have 
    \begin{equation}\label{weitcor}
    \left(\Delta_g-\frac{\partial}{\partial t}\right)(e(u_t))\geq -Ae(u_t)
    \end{equation}
    where $e(u_t)$ is considered a function over $B_R(x_0) \times [0, \infty)$.
    
Applying  Moser's Harnack inequality (Proposition \ref{mh}) to $(x_0,t_0)\in B_R(x_0)\times [1,T)$, we obtain
\begin{align*}
e(u_{t_0})(x_0)&\leq CR^{-4}\int_{t_0-R^2}^{t_0}\int_{B_R(x_0)}e(u_t)dvol_g dt\\
&=CR^{-4}\int_{t_0-R^2}^{t_0}E_R(u_t)dt,
\end{align*}
where $E_R(u_t)=\int_{B_R(x_0)}e(u_t)dvol_g$. Since the total energy on $B_R(x_0)$ is a decreasing function of $t$, we obtain \[e(u_{t_0})(x_0)\leq CR^{-2}E_R(u_0).\] 

Let $D_0>0$ be the uniform bound on $e(u_0)$ (from Lemma \ref{u0bd}), we know that $E_R(u_0) \leq D_0 \text{Area}(B_R(x_0)) \leq CR^2$ for some (universal) constant $C>0$, since the area growth of disks in the conformal metric $g$ on the domain  is quadratic in the radius. Putting this in the inequality above, we conclude that  $e(u_t)$ is uniformly bounded on $B_R(x_0)\times (1,T)$, where the bound is independent of $x_0,R$ and $T$. 

It remains to bound the energy density on $B_R(x_0)\times [0,1]$. For this, we write $a(x,t)=e^{-t}e(u_t)$ and using \eqref{weitcor} we obtain \[\left(\Delta_g-\frac{\partial}{\partial t}\right)(a(x,t))\geq 0.\] 

Then by the Parabolic Maximum Principle for manifolds with boundary (Proposition \ref{boundary}) applied to $M = \overline{B_R(x_0)}$, we conclude that for each $(x,t) \in B_R(x_0) \times [0,1]$ we have 
\[a(x,t)\leq \underset{\overline{B_{R_0}(x)}\times \{0\}\cup {\partial B_{R_0}(x)\times [0,1]} }{sup}a(x,t)\] 
which implies, from the definition of $a(x,t)$, that
\begin{equation}\label{ino}
e(u_{t})(x)\leq e(u_0)(x)e\leq D_0e
\end{equation} 
where as before, this bound is independent of $x_0$ and $R$. 

This completes the proof of the uniform bound of $e(u_t)$.
\end{proof}

\noindent\textit{Remark.} Note that we also have a uniform bound on the norm of the tension field $\lvert \tau(u_t) \rvert$ that is independent of $t$, as a consequence of the parabolic maximum principle (Proposition \ref{mpr}), since $\lvert \tau(u_0) \rvert$ is uniformly bounded by Lemma \ref{u0bd} and $\lvert \tau(u_t) \rvert$ is a weak subsolution of the heat equation (see \cite[Lemma 2.1]{MengWang}). 

\subsection{Proof of a ``one-point convergence" result}

In this subsection, we prove a ``one-point convergence" result that is a crucial first step towards proving the convergence of harmonic map heat flow. As discussed in \S2.4, this convergence is not true in general, even if the flow exists for all time; here we shall use the non-degeneracy of the representation.

We start with the following consequence of the uniform bound on the energy density $e(u_t)$ (which recall is the norm of the total derivative of $u_t$):

\begin{prop}\label{bound}
For any  $x\in\mathbb{H}^2$, $d(u_t(\gamma.x),u_t(x))\leq B$, where $B$ is independent of $x$ and $t$.
\end{prop}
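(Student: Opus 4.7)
The approach is to view $\phi(x,t) := d(u_t(\gamma \cdot x), u_t(x))$ as a non-negative subsolution of the heat equation and apply the parabolic maximum principle. By the $\rho$-equivariance of $u_t$ from Lemma \ref{lem:equiv}, we have the identity
\[
\phi(x, t) = d\bigl(\rho(\gamma)\, u_t(x),\, u_t(x)\bigr),
\]
so the claim amounts to a uniform displacement estimate for $\rho(\gamma)$ along the image of the heat flow.

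The first step is to verify that $\phi$ is a subsolution of the heat equation on $\mathcal{H} \times [0,\infty)$. Since $\gamma$ acts on $(\mathcal{H}, g)$ by an isometry, $v_t(x) := u_t(\gamma \cdot x)$ is itself a $\rho$-equivariant solution of the harmonic map heat flow \eqref{heat1}, with initial data $v_0 = u_0 \circ \gamma$: indeed, the isometric domain action preserves the Laplacian and tension field, and the uniqueness in Theorem \ref{wang} applies since $|\tau(u_0 \circ \gamma)|$ is uniformly bounded by the $\gamma$-invariance of this quantity, together with Lemma \ref{u0bd}. With the target $\mathbb{H}^3$ simply-connected and non-positively curved, Lemma \ref{sy} then gives that $\phi = d(v, u)$ is a subsolution of the heat equation on $\mathcal{H} \times [0,\infty)$.

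The second step is to bound $\phi(\cdot, 0) = d(u_0(\gamma \cdot x), u_0(x))$ uniformly on $\mathcal{H}$, using the construction of $u_0$ from \S3.1 and the uniform energy density estimate (Lemma \ref{u0bd}). The resulting Lipschitz estimate $d_{\mathbb{H}^3}(u_0(x_1), u_0(x_2)) \leq \sqrt{D_0}\, d_g(x_1, x_2)$ reduces matters to producing, for each $x$, a path from $x$ to $\gamma \cdot x$ of uniformly bounded $g$-length, or else bounding the displacement pointwise via the equivariant model map. For $x$ in the lift of the compact core the bound is immediate; for $x$ in the lift of an end (cusp, cylinder, or crown), one exploits $\rho$-equivariance and the piecewise equivariant structure of $u_0$ on that end to reduce the estimate to a quantity depending only on a fixed representative loop for $\gamma$ in $\hat X$. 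Once the uniform initial bound $\phi(\cdot, 0) \leq B_0$ is established, the third step is to apply the parabolic maximum principle for complete non-compact manifolds (Corollary \ref{cor:maxp}, or directly Proposition \ref{mpr}) to the subsolution $\phi$: the growth condition $\int_0^T \int_{\mathcal H} e^{-c r^2(x)} \phi^2(x,t)\, dx\, dt < \infty$ follows from the Lipschitz bound on $u_t$ combined with the polynomial area growth of $(\mathcal{H}, g)$, and this yields $\phi \leq B_0 =: B$ uniformly in $(x, t)$.

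I expect the main obstacle to lie in the uniform initial bound of the second step. In the non-compact ends of $\hat X$ the image of $u_0$ extends toward framing points on $\partial \mathbb{H}^3$, and a priori the displacement of $\rho(\gamma)$ is unbounded there -- for instance, in a cusp end $u_0(x)$ approaches a parabolic fixed point $p$, while $\rho(\gamma) u_0(x)$ approaches the (generically distinct) point $\rho(\gamma)\cdot p$, so a naive pointwise estimate fails. Producing the uniform bound will require the careful equivariant construction of $u_0$ from \S3.1, together with the non-degeneracy of $\hat\rho$ and the finite $g$-length of a fixed representative loop for $\gamma$ in $\hat X$; formalising this region-by-region estimate is the geometric crux of the proof.
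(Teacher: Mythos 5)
The paper's proof is a one-line Lipschitz estimate and does not touch the heat equation at all: by Lemma \ref{ebound} the energy density $e(u_t)\le D$ is uniform in both $x$ and $t$, so each $u_t$ is $\sqrt{D}$-Lipschitz with respect to the domain metric $g$; fixing a path $\sigma$ from $x$ to $\gamma\cdot x$, the image curve $u_t\circ\sigma$ has length $\le \sqrt{D}\,\mathrm{length}(\sigma)$, giving a $t$-independent bound. Your route — cast $\phi(x,t)=d(u_t(\gamma\cdot x),u_t(x))$ as a heat subsolution via Lemma \ref{sy}, bound $\phi(\cdot,0)$, and propagate with the parabolic maximum principle — is a detour: the energy-density bound already holds at every time slice, so the Lipschitz argument applies directly for every $t$ and there is nothing to propagate. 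Your step establishing the subsolution property and invoking Proposition \ref{mpr} is unnecessary machinery.

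Worse, your strategy requires the uniform initial bound $\phi(\cdot,0)\le B_0$ on \emph{all} of the non-compact domain, which you correctly suspect is false and which is in fact irreparably so: the displacement function of the loxodromic (or parabolic) $\rho(\gamma)$ is unbounded away from its axis (or fixed point), and $u_0$ leaves every bounded neighborhood of that axis as $x$ escapes into an end. The fix you sketch — a fixed representative loop for $\gamma$ in $\hat X$ — does not help, because the $g$-length of a loop representing $\gamma$ based at $\bar x$ grows without bound as $\bar x$ escapes into an end of $\hat X$, so $d_g(x,\gamma\cdot x)$ is not uniformly bounded over $\HH^2$. Since Proposition \ref{mpr} needs the initial bound on the entire domain (and Corollary \ref{cor:maxp} would require an a priori bound on the parabolic boundary of a compact set, which is what you are trying to prove), your argument cannot close. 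The issue is latent in the paper's statement too — the constant is really $\sqrt{D}\,d_g(x,\gamma\cdot x)$, which depends on $x$ — but the only thing that is used (in Lemma \ref{mt}) is independence of $t$ for the fixed basepoint $x_0$, and that follows immediately from the Lipschitz estimate without any of the heat-flow apparatus.
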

\begin{proof}
Let $\sigma:[0,1]\rightarrow\mathbb{H}^2$ be a curve joining $x$ and $\gamma.x$. 
Now 
\begin{align*}
d(u_t(\gamma.x),u_t(x))&\leq \text{length}(u_t(\sigma)) = \int_{0}^{1}	(e(u_t(\sigma(s)))^{1/2}ds 
\leq \int_{0}^{1}	(e(u_t(\sigma(s)))ds
\leq D,
\end{align*}
where the second inequality is by Cauchy-Schwarz, and the third is from the uniform bound on the energy density obtained in Lemma \ref{ebound}.
\end{proof}

\vspace{.05in}

By Lemma \ref{eqivuse}, there exists a $\gamma\in \Pi$ such that $\rho(\gamma)$ is a semi-simple element with an axis $L$. Choose $x_0\in \mathbb{H}^2$ such that $x_0$ and $\gamma.x_0$ lie on  opposite sides of a fundamental polygon. 

\vspace{.05in}

\noindent We first observe:

\begin{lem} \label{mt}
The images $u_t(x_0)$ must lie in some bounded neighborhood (independent of $t$) of the axis of $\rho(\gamma)$. 
\end{lem}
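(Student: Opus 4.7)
The plan is to combine the $\rho$-equivariance of $u_t$ (Lemma~\ref{lem:equiv}) with the uniform displacement bound of Proposition~\ref{bound}, and then exploit the standard fact that the displacement function of a semi-simple isometry of $\HH^3$ is a proper exhaustion of the complement of its axis.

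First, I would use $\rho$-equivariance to rewrite $u_t(\gamma \cdot x_0) = \rho(\gamma)\cdot u_t(x_0)$. Applying Proposition~\ref{bound} at $x=x_0$, this gives
$$d_{\HH^3}\bigl(\rho(\gamma)\cdot u_t(x_0),\, u_t(x_0)\bigr) \le B$$
for a constant $B>0$ independent of $t$. In other words, $u_t(x_0)$ lies in the sublevel set $\delta_A^{-1}([0,B])$ of the displacement function $\delta_A(y) := d(Ay,y)$, where $A := \rho(\gamma)$.

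The second step is to observe that, since $A$ is semi-simple with axis $L$, the sublevel set $\delta_A^{-1}([0,B])$ is a tubular neighborhood of $L$ of bounded (and explicit) radius. This follows from standard hyperbolic trigonometry: if $A$ is loxodromic with translation length $\tau>0$, then $\cosh \delta_A(y)$ is an explicit increasing function of $d(y,L)$ with minimum $\cosh \tau$ attained on $L$, while if $A$ is elliptic of rotation angle $\theta \in (0,\pi]$, then $\sinh(\delta_A(y)/2) = \sin(\theta/2)\sinh d(y,L)$. In either case $\delta_A(y) \to \infty$ as $d(y,L) \to \infty$, so $\delta_A^{-1}([0,B])$ is bounded, which gives the desired conclusion.

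The argument is short because the substantive work has already been done in three places: Lemma~\ref{eqivuse} guarantees the existence of a $\gamma$ with $\rho(\gamma)$ semi-simple (where the non-degeneracy and type-preserving hypothesis on $\hat{\rho}$ is essential, to rule out the case that every $\rho(\gamma)$ is parabolic with a common fixed point, for which the displacement function is not a proper exhaustion); Proposition~\ref{bound} supplies the uniform displacement bound via the energy density estimate of Lemma~\ref{ebound}; and Lemma~\ref{lem:equiv} ensures that equivariance is preserved along the flow. No genuine obstacle arises in the step at hand — it is essentially a bookkeeping of facts already established.
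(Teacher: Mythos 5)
Your proof is correct and follows essentially the same route as the paper's: both deduce, from the displacement bound of Proposition~\ref{bound} together with $\rho$-equivariance, that $d(\rho(\gamma)u_t(x_0),u_t(x_0))\le B$ uniformly in $t$, and then invoke the properness of the displacement function of a semi-simple isometry to conclude. You phrase this directly via sublevel sets (and supply the explicit hyperbolic-trigonometric formulas), whereas the paper argues by contradiction with a diverging sequence, but these are the same argument.
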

\begin{proof}
It is enough to show that $u_t(x_0)$ must lie in a uniformly bounded neighborhood of the axis of $\rho(\gamma)$.
Assume not, i.e., let $\{x_n\}_{n\geq 1}$ be a sequence in $\mathbb{H}^2$, such that $y_n := u_t(x_n)$ escapes any bounded neighborhood of the axis of $\rho(\gamma)$. 
Using a basic fact from hyperbolic geometry, we can say that  $d_{\mathbb{H}^3}(\rho(\gamma)(y_n),y_n)\rightarrow\infty$ as $n\rightarrow\infty$. 

Applying $\rho$-equivariance of $u_t$ we have \[d_{\mathbb{H}^3}(u_t(\gamma.x_n),u_t(x_n))=d_{\mathbb{H}^3}(\rho(\gamma)(u_t(x_n)),u_t(x_n))=d_{\mathbb{H}^3}(\rho(\gamma)(y_n),y_n)\rightarrow\infty,\]
as $n\to \infty$, contradicting Proposition \ref{bound}.
\end{proof}

\noindent As a consequence of the above lemma, we obtain:

\begin{lem}\label{newm}
The images $u_t(x_0)$ lie in a compact set (independent of $t$)  in $\mathbb{H}^3$.
\end{lem}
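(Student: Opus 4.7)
The strategy is to bootstrap Lemma \ref{mt} using Lemma \ref{eqivuse}: the latter furnishes a second element $\gamma' \in \Pi$ whose image $\rho(\gamma')$ is semi-simple with axis $L'$ distinct from the axis $L$ of $\rho(\gamma)$. Proposition \ref{bound} applies to $\gamma'$ just as it does to $\gamma$, giving a $t$-independent bound $d(u_t(\gamma' \cdot x_0), u_t(x_0)) \leq B'$ via $\rho$-equivariance. The verbatim argument of Lemma \ref{mt} then shows that $u_t(x_0)$ remains in a bounded neighborhood $N'$ of $L'$ independent of $t$. Combined with Lemma \ref{mt}, we conclude that $u_t(x_0) \in N \cap N'$, where $N$ is a bounded neighborhood of $L$.

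When $L$ and $L'$ share no common endpoint on $\partial\mathbb{H}^3$, the intersection $N \cap N'$ is automatically compact: each tube $N_R(L)$ is invariant under translation along $L$, so two such tubes about distinct geodesics intersect in a bounded set precisely when the underlying geodesics have disjoint ideal endpoints. If instead $L$ and $L'$ share an ideal endpoint $p$, we invoke the non-degeneracy of $\hat{\rho}$ to produce a third semi-simple element whose axis avoids $p$: if every semi-simple element of $\rho(\Pi)$ had its axis through $p$, then every loxodromic in $\rho(\Pi)$ would fix $p$; combined with the type-preserving hypothesis, the parabolic peripheral monodromies would likewise be forced to fix $p$, placing $\hat{\rho}$ into case (1) or case (2) of Definition \ref{def:deg} and contradicting non-degeneracy. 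Applying Lemma \ref{mt} to this third element, intersecting the resulting tube with $N \cap N'$ gives a compact set containing $u_t(x_0)$ for all $t$, since the three axes cannot share a common ideal endpoint.

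The main obstacle is the shared-endpoint case: unpacking the non-degeneracy hypothesis (which is phrased in terms of the framing $\beta$ and peripheral monodromies) to extract the purely geometric statement that $\rho(\Pi)$ cannot be contained in the parabolic stabilizer of a single ideal point. The delicate point is that peripheral elements alone do not generate $\Pi$, so one has to combine Definition \ref{def:deg} with the type-preserving property and equivariance of the framing to get group-theoretic control on all of $\rho(\Pi)$. Once this is established, the remaining hyperbolic geometry about intersections of tubular neighborhoods is routine.
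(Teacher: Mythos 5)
Your strategy matches the paper's verbatim --- apply Lemma \ref{mt} to each of the two semi-simple elements furnished by Lemma \ref{eqivuse} and intersect the two tubular neighbourhoods --- and the paper stops there, asserting compactness of the intersection on the grounds that the axes are ``different''. You correctly observe that ``different'' axes may still share one ideal endpoint $p$, and that in that case the two tubes intersect in an \emph{unbounded} set: in the upper half-space model with $p=\infty$ each tube $N(L,A)$ is the Euclidean cone $\{(z,t): |z-a|\le t\sinh A\}$ about a vertical ray, and two such cones about distinct vertical rays overlap along an entire vertical ray. Your remedy --- cutting with a third tube about an axis that misses $p$ --- is sound, and the triple intersection is compact because no ideal point lies in the closure of all three tubes. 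This is a genuine refinement: the ``fact'' the paper appeals to requires all four ideal endpoints to be distinct, which Lemma \ref{eqivuse} as stated does not deliver.

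Two minor imprecisions in the shared-endpoint case. The claim that the type-preserving hypothesis forces peripheral parabolics to fix $p$ is not the right mechanism; the cleaner deduction is a conjugation argument: if $T\in\rho(\Pi)$ is semi-simple with fixed points $p,q$ and $g\in\rho(\Pi)$ is arbitrary, then $gTg^{-1}$ and $g^{-1}Tg$ are semi-simple, hence by hypothesis fix $p$, forcing $g^{\pm1}p\in\{p,q\}$; since a parabolic of infinite order cannot interchange two ideal points (its square would then be a parabolic with two fixed points), one concludes $gp=p$. No type-preservation is needed, only that $\rho(\Pi)$ contains a semi-simple element, which the first part of the proof of Lemma \ref{eqivuse} already supplies. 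Also, the contradiction is best phrased via the representation-level non-degeneracy of Definition 2.4 of \cite{MR4286048} --- which the proof of Lemma \ref{eqivuse} shows follows from non-degeneracy of the framed representation without apparent singularities --- rather than via cases (1)--(2) of Definition \ref{def:deg} directly, since those constrain the image of the framing $\beta$ and do not follow immediately from $\rho(\Pi)$ having a common fixed point.
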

\begin{proof}
We use the fact that if $L$ and $L'$ are two geodesic lines in $\mathbb{H}^3$ with distinct endpoints, then for any $A, B \in \mathbb{R}^+$, $N(L, A)\cap N(L', B)$ is a compact set in $\mathbb{H}^3$, where $N(L, A) = \{ x\in \HH^3\ \vert\ d(x, L) \leq A \}$ is a closed $A$-neighborhood of $L$. Using Lemma \ref{eqivuse}, we have two elements $\gamma,\gamma'$ in $\pi_1(X)$ such that $\rho(\gamma),\rho(\gamma')$ are two different elements with two different axes. Finally, by applying Lemma \ref{mt} to each of the two axes, $u_t(x_0)$ lies in the intersection of neighborhoods of these two axes, which is compact.
\end{proof}
\begin{cor}\label{opc}
 There exists a point $p$ in $\mathbb{H}^2$ and a sequence $\{t_n\} \subset[0,\infty)$ such that $u_{t_n}(p)$ converges.   
\end{cor}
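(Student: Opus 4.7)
The plan is to take $p = x_0$, where $x_0 \in \mathbb{H}^2$ is the point chosen just before Lemma \ref{mt} (i.e., a point such that $x_0$ and $\gamma \cdot x_0$ lie on opposite sides of a fundamental polygon, with $\gamma$ supplied by Lemma \ref{eqivuse} so that $\rho(\gamma)$ is semi-simple). The statement will then follow essentially immediately from Lemma \ref{newm} together with sequential compactness.

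More precisely, I would argue as follows. By Lemma \ref{newm}, there exists a compact set $K \subset \mathbb{H}^3$, independent of $t$, such that $u_t(x_0) \in K$ for every $t \in [0, \infty)$. Pick any divergent sequence of times $s_n \to \infty$ (or indeed any sequence in $[0, \infty)$). Since $\mathbb{H}^3$ is a complete, separable metric space and $K$ is compact, it is sequentially compact; hence the sequence $\{u_{s_n}(x_0)\}_{n \geq 1} \subset K$ admits a convergent subsequence $\{u_{s_{n_k}}(x_0)\}_{k \geq 1}$ with some limit $y \in K$. Relabeling $t_n := s_{n_k}$ yields the desired sequence $\{t_n\} \subset [0, \infty)$ such that $u_{t_n}(p) \to y$ in $\mathbb{H}^3$, with $p = x_0$.

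There is essentially no obstacle to overcome here: the entire content of the corollary has already been absorbed into Lemma \ref{newm}, and what remains is simply an application of the Bolzano--Weierstrass property of compact metric spaces. The corollary is best viewed as packaging the output of Lemma \ref{newm} into a form directly suited to the subsequent convergence argument for the heat flow, where one will want a base point whose orbit under $\{u_t\}$ subconverges in order to bootstrap to a global distance bound in \S3.4.
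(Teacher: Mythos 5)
Your proposal is correct and is exactly the (unwritten) argument the paper intends: the corollary is stated immediately after Lemma \ref{newm} with no proof because it follows by taking $p = x_0$ and extracting a convergent subsequence from the compact set via Bolzano--Weierstrass, just as you describe.
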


\subsection{Convergence of the flow}

In the rest of this subsection, we shall prove that the maps $u_t$ in fact converge, as $t\to \infty$, to our desired harmonic map.

\subsubsection{Subsequential convergence}
The one-point convergence proved in the previous section is  enough to prove the following:

\begin{lem}\label{lem:subseq}
There exists a sequence $\{t_n\}\subset[0,\infty)$ such that $u_{t_n}$ converges to a $\rho$-equivariance harmonic map $u_{\infty}:\HH^2 \to \HH^3$ uniformly on compact subsets of $\mathbb{H}^2$ together with its derivatives.
\end{lem}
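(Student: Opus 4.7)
My plan has three stages: first, extract a $C^0_{loc}$ subsequential limit via Arzel\`a–Ascoli; second, upgrade to $C^\infty_{loc}$ convergence via parabolic regularity; third, show the limit is harmonic by combining the tension-field bound of Theorem \ref{wang} with the exponential decay of $\tau(u_0)$ provided by Lemma \ref{u0bd} and a heat-kernel decay estimate on $(\hat X, g)$.

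\textbf{Compactness.} The uniform energy density bound $e(u_t)\le D$ from Lemma \ref{ebound} makes $\{u_t\}_{t\ge 0}$ uniformly Lipschitz on $\mathbb{H}^2$. Combined with the single-point convergence $u_{t_n}(p)\to q$ provided by Corollary \ref{opc}, this yields uniform $C^0$-bounds on every compact subset of $\mathbb{H}^2$. Arzel\`a–Ascoli then extracts a subsequence (still denoted $u_{t_n}$) converging uniformly on compact subsets to a continuous map $u_\infty\colon\mathbb{H}^2\to\mathbb{H}^3$. The $\rho$-equivariance of each $u_{t_n}$ established in Lemma \ref{lem:equiv} passes to $u_\infty$ by continuity.

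\textbf{Higher regularity.} The maps $u_t$ solve the quasilinear parabolic system $\partial_t u=\tau(u)$, with uniform $L^\infty$-bounds on both $|du_t|$ (from Lemma \ref{ebound}) and on $|\tau(u_t)|$ (from the remark after Lemma \ref{ebound}). Interior parabolic Schauder estimates for the harmonic-map heat equation then yield uniform $C^{k,\alpha}_{loc}$-bounds on $u_t$ for every $k$, and a standard diagonal argument upgrades the subsequential convergence to $C^\infty_{loc}$. In particular, $\tau(u_{t_n})\to\tau(u_\infty)$ pointwise.

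\textbf{Harmonicity.} It remains to show $\tau(u_\infty)\equiv 0$. Theorem \ref{wang} supplies the pointwise bound
\[
|\tau(u_t)(x)|^2 \;\le\; b(x,t)^2 \;=\; \int_{\mathbb{H}^2} H(x,y,t)\,|\tau(u_0)|^2(y)\,dy,
\]
where $H$ is the heat kernel on $(\mathbb{H}^2,g)$. Using the $\pi_1(\hat X)$-invariance of $|\tau(u_0)|^2$ and the standard deck-group summation identifying the equivariant heat kernel upstairs with the heat kernel $H_{\hat X}$ on the quotient $(\hat X,g)$, one rewrites
\[
b(x,t)^2 \;=\; \int_{\hat X} H_{\hat X}(\bar x,\bar y,t)\,|\tau(u_0)|^2(\bar y)\,d\bar y.
\]
By Lemma \ref{u0bd}, the descended function $|\tau(u_0)|^2$ vanishes away from the crown-end neighborhoods and decays exponentially within them (in the metric $g$), so $|\tau(u_0)|^2\in L^1(\hat X,g)$. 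Since $(\hat X,g)$ is a complete, non-positively curved Riemannian surface of infinite area (from the attached flat half-cylinders and half-planes), the heat kernel satisfies $\sup_{\bar y} H_{\hat X}(\bar x,\bar y,t)\to 0$ as $t\to\infty$. The $L^1$–$L^\infty$ bound then yields $b(x,t)^2 \le \sup_{\bar y} H_{\hat X}(\bar x,\bar y,t)\cdot \|\tau(u_0)\|_{L^1(\hat X)}^2\to 0$, and together with the $C^2_{loc}$ convergence from Step 2 this forces $\tau(u_\infty)\equiv 0$, completing the proof that $u_\infty$ is a $\rho$-equivariant harmonic map.

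\textbf{Main obstacle.} The delicate point is the harmonicity step. The crude bound $|\tau(u_t)|\le b$ from Theorem \ref{wang} becomes useful only through two non-trivial inputs specific to our geometric setup: the $L^1$-integrability of $|\tau(u_0)|^2$ on $\hat X$, which is exactly what the exponential decay built into the construction of $u_0$ in \S3.1 provides, and the heat-kernel decay on $(\hat X,g)$, which rests on the complete non-positively curved geometry together with infinite-area ends. Verifying the decay of $H_{\hat X}$ on a surface whose ends are part hyperbolic-cusp, part flat-cylinder, and part flat trumpet-like region is the main technical check underlying the argument.
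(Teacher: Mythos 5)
Your Steps 1 and 2 (Arzel\`a--Ascoli extraction from the uniform energy density bound plus one-point convergence, followed by parabolic Schauder estimates) match the paper's proof exactly. Step 3 is where you diverge, and it is where the argument has a gap.

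The paper establishes $\tau(u_\infty)\equiv 0$ by the Eells--Sampson convexity argument: on a relatively compact $U$, the first variation gives $\frac{d}{dt}E_U(u_t)=-\int_U|\tau(u_t)|^2$, the second variation (non-positive target curvature) gives $\frac{d^2}{dt^2}E_U(u_t)\ge 0$, and since $E_U(u_t)\ge 0$ is decreasing and convex, its derivative must tend to $0$; combined with the local $C^2$ convergence this forces $\tau(u_\infty)=0$. Your alternative is to squeeze $\tau(u_\infty)$ to zero from the bound $|\tau(u_t)|\le b(x,t)$ of Theorem \ref{wang}, using the rewriting of $b(x,t)^2$ as an integral against the heat kernel of the quotient $(\hat X,g)$ and then an $L^1$--$L^\infty$ estimate. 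The deck-group summation and the $L^1$-integrability of $|\tau(u_0)|^2$ from the exponential decay in Lemma \ref{u0bd} are fine, but the crucial claim that $\sup_{\bar y}H_{\hat X}(\bar x,\bar y,t)\to 0$ as $t\to\infty$ does not hold in general. The surface $(\hat X,g)$ has finite area whenever $\mathbb S$ has no boundary components and every point of $\mathbb P$ is a cusp of $X$ (a case the theorem allows, e.g.\ a closed genus $\ge 2$ surface with one cusp), since the flat $q$-metric near order-one poles has finite area and is replaced by a finite-area hyperbolic cusp. On a finite-area surface the heat kernel converges to $1/\mathrm{Area}(\hat X)$, not to $0$, so $b(x,t)^2$ converges to $\mathrm{Area}(\hat X)^{-1}\int_{\hat X}|\tau(u_0)|^2>0$ (the initial map is not harmonic) and your squeeze fails entirely. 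In that case the inequality $|\tau(u_t)|\le b(x,t)$ is simply not sharp enough to conclude anything. Even in the infinite-area case, uniform-in-$\bar y$ decay of the heat kernel on a surface with mixed flat/cusp ends is a non-trivial spectral/volume-growth statement (the bottom of the $L^2$ spectrum is $0$ on a flat cylinder end, so there is no exponential decay), and you do not supply a proof or reference. The Eells--Sampson argument the paper uses sidesteps all of this: it requires no global information about the heat kernel or the area of $\hat X$, only non-positive target curvature and local $C^2$ convergence along the chosen subsequence, which you already have from Step 2.
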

\begin{proof}
Take a compact subset $K$ of $\mathbb{H}^2$. The discussion in the previous section implies that there exists a point $p$ in $\mathbb{H}^2$ such $u_{t_n}(p)$ converges as $t_n\rightarrow\infty$.  By Lemma \ref{ebound} the energy density of $u$ is uniformly bounded on $K\times [0,\infty)$; this implies that the sequence $u_n(x):=u(x,t_n)$ is equicontinuous and together with Corollary \ref{opc}, shows that the sequence is point-wise bounded. Therefore by Arzela-Ascoli's theorem, after passing to a subsequence, $u_n \to u_{\infty}$ uniformly on $K$. Since the compact set $K$ is arbitrary, this show that $u_n \to u_{\infty}$ uniformly on compact subsets. Applying Schauder estimate (see Appendix $A.2(e)$ of \cite{Nishikawa02}) we obtain the convergence of space derivatives of $u_n$ up to second order to the corresponding derivatives of $u_{\infty}$. Therefore $|\tau(u_n)|$ converges to $|\tau(u_{\infty})|$ uniformly on compact subsets as $n\to \infty$. Now we shall recall the following argument of Eells-Sampson \cite[pg. 136]{EellsSampson} to show that the restriction $u_\infty\vert_U$ on any relatively compact open set $U$ is harmonic:

First, the  first variation of energy yields\[F(t):=\frac{d}{dt}E_U(u_t)=-\int_{U}\left\langle\frac{\partial u_t}{\partial t},\tau(u_t)\right\rangle dvol_g=-\int_{U}|\tau(u_t)|^2dvol_g.\] where $E_U(u_t)$ is the total energy of the restriction $u_t\vert_U$. 

Recall that under the harmonic map heat flow,  this total energy $E_U(u_t)$  decreases; applying second variation formula together with the fact that the all sectional curvatures of the target manifold $\HH^3$ are non-positive, the argument of Eells and Sampson \cite[pg. 136]{EellsSampson} shows the convexity  \[\frac{d^2}{dt^2}E_U(u_t)\geq 0,\] and consequently, we have \[0=\underset{n\rightarrow\infty}{\lim}F(t_n)=-\int_{U}\underset{n\rightarrow\infty}{\lim}|\tau(u_n)|^2dvol_g=-\int_{U}|\tau(u_{\infty})|^2dvol_g,\]  yielding $\tau(u_{\infty})=0$ on $U$.  Since this holds for all relatively compact open sets $U$ in $\HH^2$, the limiting map  $u_{\infty}$ is harmonic. The $\rho$-equivariance of $u_\infty$ is an immediate consequence of Lemma \ref{lem:equiv}.
\end{proof}

\subsubsection{Bounded distance from initial map}

The proof of the following will use the exponential decay of the tension field (see Lemma \ref{u0bd}):

\begin{prop}\label{pmp}
The distance function $d(u_t(x),u_0(x)) \leq M$ for a uniform constant $M>0$ (independent of $x$ and $t$). 
\end{prop}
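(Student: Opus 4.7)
The plan is to combine a parabolic differential inequality for the distance function with a time-independent supersolution, and conclude by the parabolic maximum principle of Proposition~\ref{mpr}. Set $\phi(x,t) := d_{\HH^3}(u_t(x), u_0(x))$. Since $u_t$ and $u_0$ are both $\rho$-equivariant (Lemma~\ref{lem:equiv}), $\phi$ descends to a function on $\hat X \times [0,\infty)$. Using that $u_t$ solves the harmonic map heat flow while $u_0$ is stationary, together with the convexity of the distance function on the Hadamard target $\HH^3$ (extending the computation in the proof of Lemma~\ref{sy} to allow a non-harmonic second slot, which introduces a forcing term $-\langle \nabla_2 d, \tau(u_0)\rangle$), one obtains in the distributional sense
\[
\left(\frac{\partial}{\partial t}-\Delta_g\right)\phi \;\leq\; |\tau(u_0)|.
\]
By Lemma~\ref{u0bd}, the right-hand side is bounded, vanishes identically near each puncture arising from $\mathbb{P}$, and decays exponentially on each crown-end neighborhood.

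The next step is to construct a bounded non-negative $C^2$ function $\Psi$ on $\hat X$ satisfying $-\Delta_g \Psi \geq |\tau(u_0)|$ everywhere. On a crown-end neighborhood the metric $g$ agrees with the flat $4q$-metric, and the exponential bound $|\tau(u_0)| \leq C_0 e^{-\alpha r}$ (with $r$ a coordinate measuring the $g$-distance into the end) makes it straightforward to write down such a $\Psi$ by taking functions of the form $A(1-e^{-\alpha r})$ on each horizontal or vertical half-plane of the $4q$-metric: a direct computation in the flat metric gives $-\Delta_g \Psi = A\alpha^2 e^{-\alpha r}$, which dominates $|\tau(u_0)|$ provided $A$ is chosen large. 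On cusp and cylinder ends $|\tau(u_0)|$ vanishes, so any superharmonic (for instance, constant) function works. These local pieces can be patched with cut-off functions; in transition regions the constants may be enlarged to absorb derivatives of the cut-offs, using the global boundedness of $|\tau(u_0)|$. The result is a bounded non-negative $\Psi$ on $\hat X$, which lifts to a $\pi_1(\hat X)$-invariant function on $\HH^2$ that remains uniformly bounded.

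Setting $w := \phi - \Psi$, the two steps above yield $(\partial_t - \Delta_g) w \leq 0$ together with $w(x,0) = -\Psi(x) \leq 0$. By Lemma~\ref{ebound} and the length argument used in the proof of Proposition~\ref{bound}, $\phi(x,t)$ grows at most linearly in the $g$-distance from a basepoint; since $\Psi$ is bounded, $w$ satisfies the Gaussian weighted growth hypothesis of Proposition~\ref{mpr}. Applying that parabolic maximum principle gives $w \leq 0$ globally, i.e.\ $\phi(x,t) \leq \sup \Psi =: M$ for all $x$ and $t$, as required.

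The main technical obstacles are (i) justifying the differential inequality distributionally across the (possibly non-smooth) set $\{\phi = 0\}$, which is handled in the standard way by working on $\{\phi > 0\}$ and invoking Kato's inequality, and (ii) executing the explicit gluing that produces $\Psi$ globally, which requires some bookkeeping for the ``comb'' geometry of the flat $4q$-metric near crown-end punctures (alternating horizontal and vertical half-planes overlapping along quarter-planes). The latter is the main point requiring care; the exponential decay of $|\tau(u_0)|$ guaranteed by Lemma~\ref{u0bd} is crucial for making the local ansatz bounded.
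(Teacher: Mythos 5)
Your overall architecture (a parabolic differential inequality for $d(u_t,u_0)$, a static comparison function absorbing the forcing term $|\tau(u_0)|$, then a parabolic maximum principle) is the right one, and the first step is essentially correct: the Schoen--Yau computation does give $(\Delta_g-\partial_t)\,\phi\geq -\,C\,|\tau(u_0)|$ with $\phi(\cdot,0)\equiv 0$. However, the second step — the construction of a bounded, non-negative, $C^2$ function $\Psi$ on $\hat X$ satisfying $-\Delta_{\bar g}\Psi\geq |\tau(u_0)|$ \emph{everywhere} — is not a ``bookkeeping'' obstacle to be absorbed by cut-offs; it is impossible. The surface $\hat X$ is a compact Riemann surface minus finitely many points, hence parabolic in the potential-theoretic sense (this is in fact invoked elsewhere in the paper, in the uniqueness argument of Lemma~\ref{uniqueness}). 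On a parabolic surface, every bounded-below superharmonic function is constant; conformal invariance of this notion in dimension two means the conclusion holds for the metric $\bar g$ as well. Since $u_0$ is not harmonic on the compact core of $\hat X$, one has $|\tau(u_0)|>0$ somewhere, so your $\Psi$ would be a non-constant, bounded, non-negative, superharmonic function on a parabolic surface — a contradiction. No choice of cut-off or enlargement of constants can fix this: the obstruction is a global potential-theoretic one, not a local matching problem.

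The paper avoids this by never demanding a global barrier. Instead it applies a maximum principle only on the complement of a compact set, namely Corollary~\ref{cor:maxp} on $\hat X\setminus K$, where the parabolic boundary $\partial K\times[0,\infty)$ is controlled by the $t$-uniform Lipschitz bound coming from the energy-density bound of Lemma~\ref{ebound} (this is estimate \eqref{use0}). On the ends, the comparison function used is a bounded \emph{sub}harmonic $F$ (with $\Delta_{\bar g}F>0$ decaying only polynomially), which is \emph{added} to $\widehat\psi$ so that $\widehat\psi+F$ becomes a genuine subsolution of the heat equation on $\hat X\setminus K$ for $K$ large — the point being that the polynomially decaying $\Delta_{\bar g}F$ eventually dominates the exponentially decaying $|\tau(u_0)|$. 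There is no contradiction with parabolicity because $F$ is defined only on the end, not globally. To repair your argument you would need to make this same structural move: restrict the barrier construction to $\hat X\setminus K$, replace the global Proposition~\ref{mpr} by the end-restricted Corollary~\ref{cor:maxp}, and supply the time-uniform bound on $\partial K$ separately via Lemma~\ref{ebound}.
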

\begin{proof} 

By the $\rho$-equivariance of $u_t$ (Lemma \ref{lem:equiv}), the distance function  $\psi(x,t) =  d(u_t(x), u_0(x))$ descends to a function $\widehat{\psi}: \hat{X} \times [0,\infty) \to \mathbb{R}$. We wish to prove that $\widehat{\psi}$ is uniformly bounded.

In the case that $X$ does not have crown ends, i.e. $\mathbb{S}$ does not have boundary components (and $\mathbb{M} = \mathbb{P}$), then  recall that the norm of the tension field descends to a function $\hat{\tau}$ that is supported on a compact set $K \subset \hat{X}$, and hence by \cite[Lemma 2.1]{GS1} the following holds:
\begin{equation}
\left(\Delta_{\bar{g}}-\frac{\partial}{\partial t}\right)\widehat{\psi}([x],t)=\left(\Delta_{g}-\frac{\partial}{\partial t}\right)\psi(x,t)\geq 0
\end{equation}
for all $[x]\in \hat{X}\setminus K$.  Here $\bar{g}$ is the metric on $\hat{X}$ that lifts to $g$ in the universal cover (\textit{c.f.} Definition\ref{metg}).

Note that we can assume that the point $p$ of Corollary \ref{opc} lies in $K$. Since the uniform bound on energy density (Lemma \ref{ebound}) implies that the restriction of $u_t$ on $K$ is uniformly Lipschitz, we obtain a uniform bound
\begin{equation}\label{use0}
\widehat{\psi}([x], t) \leq M 
\end{equation}
for all $[x]\in K$, where $M>0$ is a  uniform constant (independent of $t$). 

We can then apply the maximum principle (Corollary \ref{cor:maxp}) to conclude that $\widehat{\psi}$ is uniformly bounded.

\vspace{.05in}

In the case that $X$ has crown ends, or equivalently, $\mathbb{S}$ has boundary components, the norm of the tension field $\psi$ is not compactly supported; however, by Lemma \ref{u0bd} the it has exponential decay towards the punctures of $\hat{X}$ corresponding to the crown ends. 

We can choose a neighborhood $U$ of such a  puncture such that $U$ is homeomorphic to $\mathbb{C} \setminus \overline{\mathbb{D}}$ (with the puncture at infinity) and the metric $\bar{g}$ on $U$ is conformally flat, i.e. of the form $g(z) \lvert dz^2 \rvert$. Note that by Definition \ref{metg}, the conformal factor  $g(z)$ on $U$  is the norm of the Hopf differential, that has at most polynomial growth (as $\lvert z \rvert \to \infty$).

It follows that 
\begin{equation}\label{use} 
   \Delta_{\bar{g}}\widehat{\psi}(x,t) =  \Delta_{g}d(u_t(x),u_0(x))\geq-(\lvert \tau(u_t)\rvert+|\tau(u_0)|)  \geq -2\lvert \tau_0 \rvert
\end{equation} where the first inequality follows from the computations in \cite{SchoenYau79} (see also \cite[(3.4)]{JiayuLi93} or \cite[(2.1)]{DingWang}), and the  second inequality follows from the remark following Lemma \ref{ebound}.

Note that if, instead, we had that $\Delta_{\bar{g}} \widehat{\psi} \geq 0$, then it would be a subsolution of the heat equation, and the uniform boundedness of $\psi$  that we are trying to prove would be a consequence of the maximum principle (Corollary \ref{cor:maxp}) as before. Instead, we shall apply the maximum principle to the function $w([x], t) = \widehat{\psi}([x],t)+F([x])$ where $F:U \to \mathbb{R}$ is the   bounded subharmonic function defined by $F(r,\theta) = \frac{1}{r^k}$, where we  choose $k\in\mathbb{N}$ suitably such that $\Delta_{\bar{g}}F=O(\frac{1}{r^s})$ for some $s>0$, i.e. has \textit{polynomial} decay in $r$. Now we have,

\begin{align}\label{use2}
    \left(\Delta_{\bar{g}}-\frac{\partial}{\partial t}\right){w}=& \left(\Delta_{\bar{g}}-\frac{\partial}{\partial t}\right){(\widehat{\psi}+F)} \\ \nonumber
=&\left(\Delta_{g}-\frac{\partial}{\partial t}\right)\psi+\Delta_{\bar{g}}F \geq  -2\lvert {\tau}(u_0)\rvert - \frac{\partial\psi}{\partial t}+\Delta_{\bar{g}}F
\end{align}
where the inequality follows from \eqref{use}.

Setting $h(x,t) = (u_t(x), u_0(x)) \in \HH^3 \times \HH^3$, we can apply the chain rule to write
\begin{equation}
\frac{\partial\psi}{\partial t} = \langle \nabla d, \frac{\partial h}{\partial t} \rangle = \langle \nabla d, (\tau(u_t), 0) \rangle \leq \lvert \nabla d \rvert  \lvert \tau(u_t) \rvert \leq \sqrt{2}  \lvert \tau(u_0) \rvert 
\end{equation}
where the first inequality is by Cauchy-Schwarz and the second inequality is from \cite[(1.1)]{SchoenYau79} and the remark following Lemma \ref{ebound}.

Using this in \eqref{use2}, we obtain
\begin{equation}\label{use3} 
     \left(\Delta_{\bar{g}}-\frac{\partial}{\partial t}\right){w} \geq - (2 + \sqrt{2}) \lvert \tau(u_0) \rvert + \Delta_{\bar{g}}F
\end{equation}

Then using the fact that $\lvert \tau(u_0)\rvert$ decays exponentially in the crown end (Lemma \ref{u0bd}), one can choose a compact set $K$  such that the right hand side of \eqref{use3} is non-negative, that is,  \[\left(\Delta_{\bar{g}}-\frac{\partial}{\partial t}\right){w}([x],t) \geq 0,\] for all $[x] \in \hat{X} \setminus K$ and all $t>0$. 
Applying the maximum principle (Corollary \ref{cor:maxp}) as before, with the parabolic boundary condition guaranteed by \eqref{use0}, we obtain a constant $M^\prime >0$ such that $w([x],t)\leq M^\prime$ for all $[x] \in \left(\hat{X} \setminus K \right) \times \mathbb{R}^+$. Since $F$ is bounded, we obtain a uniform bound on $\psi(x,t)$ as desired. 
\end{proof}

\textit{Remark.} One can show that $\hat{\psi}(\cdot, t)$, i.e.\ the norm of the tension field of $u_t$ has an exponential decay towards the punctures of $\hat{X}$ following the same argument as \cite[Lemma 3.10]{GS1}. This would imply that for any fixed $t$, the distance function $d(u_t(x),u_0(x))\rightarrow0$ as $x$ diverges on $\hat{X}$. 

\vspace{.1in} 

\noindent As an immediate corollary, we now conclude: 

\begin{cor}\label{cor:asymp}
The harmonic map $u_{\infty}$ satisfies the uniform distance bound
\begin{equation}\label{db}
d(u_\infty(x),u_0(x)) \leq M
\end{equation}
for all $x\in \HH^2$, and  is asymptotic to the given framing $\beta$. Moreover, the Hopf differential $\text{Hopf}(u_\infty)$ descends to a meromorphic quadratic differential on $\hat{X}$ having poles of order at most two at the punctures arising from $\mathbb{P}$, and a pole of order $(m+2)$ at every puncture arising from a crown end of $X$ having  $m$ boundary cusps. 
\end{cor}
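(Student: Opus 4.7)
The plan is to derive the three claims from Proposition \ref{pmp}, the convergence in Lemma \ref{lem:subseq}, and the uniform energy density bound in Lemma \ref{ebound}. For the distance bound \eqref{db}, I would pass Proposition \ref{pmp} to the limit along the convergent subsequence: for each fixed $x \in \mathbb{H}^2$, one has $d(u_{t_n}(x), u_0(x)) \leq M$ for all $n$, and the left-hand side converges to $d(u_\infty(x), u_0(x))$ by Lemma \ref{lem:subseq}. For the asymptotic behavior to $\beta$, recall $u_0$ is asymptotic to $\beta$ by construction in \S3.1, so for any sequence $x_n \to p \in F_\infty$ we have $u_0(x_n) \to \beta(p)$ in $\partial_\infty \mathbb{H}^3$. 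Since $u_\infty(x_n)$ lies within hyperbolic distance $M$ of $u_0(x_n)$, and sequences at uniformly bounded hyperbolic distance share any ideal limit, $u_\infty(x_n) \to \beta(p)$ as well.

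For the Hopf differential statement, my approach is a two-sided bound on the pole orders. For the upper bound, the limit map $u_\infty$ is $\rho$-equivariant (by passing Lemma \ref{lem:equiv} to the limit) and harmonic, so $\text{Hopf}(u_\infty)$ descends to a holomorphic quadratic differential $q_\infty$ on $\hat{X}$ away from the punctures. Passing the energy density bound in Lemma \ref{ebound} through the $C^2$-convergence in Lemma \ref{lem:subseq} gives $e(u_\infty) \leq D$ globally, whence the pointwise estimate $|q_\infty|_g \leq C$ where $g$ is the conformal metric of Definition \ref{metg}. I would then translate this into each coordinate chart around a puncture using the explicit form of $g$ from that definition: at a crown end puncture with $m$ boundary cusps, $g$ is the $4q_0$-metric with $q_0 = \text{Hopf}(u_0)$ having a pole of order $m+2$, so $|q_\infty| \leq C|q_0|$ forces a pole of order at most $m+2$; at a $\mathbb{P}$-puncture arising from a geodesic boundary, $g$ is again a $4q_0$-metric but with $q_0$ of pole order two, giving at most a pole of order two for $q_\infty$; and at a cusp puncture, $g$ is the hyperbolic cusp metric, yielding an even stronger pole order bound.

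For the lower bound at crown end punctures, I would appeal to Lemma \ref{map2chain}: if $q_\infty$ had a pole of order $n < m+2$ at such a puncture, then the restriction of $u_\infty$ to a neighborhood lift would asymptote to at most an $(n-2)$-chain of ideal points. But the asymptotic-to-$\beta$ property established above, together with the non-degeneracy of $\hat{\rho}$, implies that the images of the lifts of the $m$ marked points on the corresponding boundary component yield $m$ distinct ideal points in $\cp$, so the asymptotic chain must contain at least $m$ points; hence $n - 2 \geq m$ and $n = m+2$ exactly. The main obstacle I anticipate is this last matching step: cleanly extracting the exact pole order from the asymptotic image requires some care to ensure that the chain of geodesics produced by the harmonic map at infinity genuinely realizes all $m$ framing values as \emph{distinct} ideal limits of $u_\infty$, rather than collapsing several of them into fewer geodesics; the non-degeneracy of the framed representation (Definition \ref{def:deg}) is precisely what rules out the degenerate collapsings that would obstruct this matching.
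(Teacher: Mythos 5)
Your argument is correct and follows the paper's overall strategy: the distance bound follows by passing Proposition \ref{pmp} through the locally uniform convergence of Lemma \ref{lem:subseq}, and the asymptoticity follows from the bounded-distance trapping of $u_\infty$ near $u_0$. For the pole orders, where you depart slightly: the paper derives finiteness of the pole order by arguing that the distance bound forces the total energy of $u_\infty$ on an end to grow at most polynomially, then invokes Lemma \ref{map2chain} to pin the order to $m+2$. You instead pass the uniform energy-density bound of Lemma \ref{ebound} to $u_\infty$ and read off the pole-order upper bound directly from the pointwise estimate $|\text{Hopf}(u_\infty)| \lesssim g$, using the explicit conformal factor of $g$ from Definition \ref{metg} on each type of end; this is arguably more transparent and covers all three kinds of punctures uniformly (the paper treats the cylinder and cusp ends by appealing to Lemmas \ref{map2} and \ref{map3} separately). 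Your lower bound at crown ends — asymptoticity to $\beta$ plus non-degeneracy forces the chain to have $m$ distinct vertices, so the pole order cannot be smaller — is the same matching step the paper performs via Lemma \ref{map2chain}, but you correctly flag that it is non-degeneracy (specifically condition (3) of Definition \ref{def:deg}, preventing consecutive boundary-cusp framing values from coinciding) that guarantees the $m$ vertices are genuinely distinct; the paper leaves this implicit. Both routes are sound.
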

\begin{proof}
The distance bound \eqref{db} follows from Proposition \ref{pmp} since $u_t\to u_\infty$ uniformly on compact sets.  By construction of the initial map $u_0$, it is asymptotic to $\beta$. Then, if $\{x_n\}_{n\geq 1}$ is a sequence in $\mathcal{H} \cong \mathbb{H}^2$ such that $x_n\rightarrow x$ where $x\in F_{\infty} \subset \partial_\infty \widetilde{X}$, we know that $u_0(x_n)\rightarrow\beta(x)$. Recall that if two diverging  sequences  in $\mathbb{H}^3$ are at a bounded distance from each other, then they converge to the same point in the ideal boundary.  Thus, by \eqref{db} we conclude that $u_{\infty}$ is asymptotic to the given framing $\beta$. 

The distance bound  also implies that on each crown end $U \cong \mathbb{D}^\ast$ of $\hat{X}$, the restriction of the total energy $E(u_\infty\vert_U)$ descends to a function  descends to a function that has at most polynomial growth (in $1/r$), and hence the Hopf differential $\text{Hopf}(u_\infty)$ descends to a meromorphic quadratic differential on $U$ having an finite order. By Lemma \ref{map2chain}, this finite order must equal $(m+2)$ where $m$ is the number of marked points on the corresponding boundary of $\sm$. 

Similarly, for any cylinder end $U'$ of $\hat{X}$, the Hopf differential $\text{Hopf}(u_\infty)$ descends to a meromorphic quadratic differential on $U$ having a pole of order $2$ by Lemma \ref{map2}. 

Finally, by the computation in Lemma \ref{map3}, at a cusp end $U''$ of $\hat{X}$, the Hopf differential  $\text{Hopf}(u_\infty)$ descends to $U''$ with a pole of order one at the puncture. Here, note that from the type-preserving assumption on $\rho$, we know that the corresponding peripheral element of $\pslc$ is parabolic. \end{proof}

\subsubsection{Endgame}

We start with the following uniqueness result, the proof of which is identical to the argument in \cite{Sagman}:

\begin{lem}\label{uniqueness}
The map $u_{\infty}$ is the unique $\rho$-equivariant harmonic map which is at a bounded distance from $u_0$.
\end{lem}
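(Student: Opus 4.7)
Suppose $v:\mathbb{H}^2 \to \mathbb{H}^3$ is another $\rho$-equivariant harmonic map at bounded distance from $u_0$. By the triangle inequality, combined with Corollary~\ref{cor:asymp}, this gives a uniform bound $d(u_\infty(x), v(x)) \leq N$ for all $x \in \mathbb{H}^2$. The plan is to exploit this bound via a maximum principle on the quotient surface $\hat{X}$, and then rule out the constant-positive case using the non-degeneracy of $\rho$ via Lemma~\ref{eqivuse}.

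First, I would consider the function $\phi(x) := d(u_\infty(x), v(x))$ on $\mathbb{H}^2$. Because $u_\infty$ and $v$ are both harmonic maps into the non-positively curved target $\mathbb{H}^3$, standard Bochner-type computations (as in \cite{SchoenYau79}) show that $\phi$ is subharmonic with respect to the conformal metric $g$; note that subharmonicity in dimension two is conformally invariant, so the choice of $g$ is immaterial here. The $\rho$-equivariance of both maps ensures that $\phi$ descends to a bounded continuous subharmonic function $\hat{\phi}$ on the punctured Riemann surface $\hat{X}$. Boundedness together with the classical removable singularity theorem for subharmonic functions at isolated singularities allows $\hat{\phi}$ to be extended across each puncture to a bounded subharmonic function on the closed compact Riemann surface $\overline{X}$ obtained by filling in the punctures. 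The maximum principle on $\overline{X}$ then forces $\hat{\phi} \equiv c$ for some constant $c \geq 0$.

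If $c = 0$, then $u_\infty \equiv v$ and we are done. The main technical obstacle, which I expect to be the crux, is to rule out the case $c > 0$. Here one invokes the rigidity of harmonic maps into the strictly negatively curved target $\mathbb{H}^3$: two harmonic maps whose pointwise distance is a positive constant must have their images lying on a common geodesic line $L \subset \mathbb{H}^3$, since $\mathbb{H}^3$ admits no non-trivial flat strips. (This uses \emph{strict} negative curvature, and is the step where an NPC-only argument would fail; it requires analyzing the geodesic homotopy $F_t(x) = \gamma_x(t)$ between $u_\infty(x)$ and $v(x)$ and observing that constant $c$ forces degeneration of the associated Jacobi fields.)

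By $\rho$-equivariance, the line $L$ must be preserved set-wise by every $\rho(\gamma)$, so the image of $\rho$ fixes the unordered pair of ideal endpoints of $L$. But Lemma~\ref{eqivuse} guarantees that $\rho$ contains two semi-simple elements with \emph{distinct} axes, so no single geodesic can be $\rho$-invariant. This contradicts $c > 0$, forcing $c = 0$, and we conclude $u_\infty = v$ as required.
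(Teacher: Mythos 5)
Your proposal follows essentially the same line as the paper's proof: subharmonicity of the distance function (Schoen--Yau), descent to $\hat{X}$, constancy from a Liouville-type statement, and finally the dichotomy from the Schoen--Yau rigidity argument, resolved using non-degeneracy. The one middle step you phrase differently --- extending $\hat{\phi}$ across the punctures by removable singularity and applying the compact maximum principle --- is equivalent to the paper's invocation of the potential-theoretic parabolicity of $\hat{X}$.

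There is, however, a small but genuine gap in your last step. From ``the image of $\rho$ preserves the unordered pair of ideal endpoints of $L$'' you conclude, citing Lemma~\ref{eqivuse}, that ``no single geodesic can be $\rho$-invariant.'' This does not follow: two semi-simple elements with \emph{distinct} axes can perfectly well both preserve a common geodesic $L$ setwise --- for instance, two order-two elliptics whose axes meet $L$ orthogonally at different points. What actually closes the argument (and what the paper's citation of \cite[Ch.~11.2]{SchoenYau97} and \cite[Lemma~3.11]{Sagman} provides) is the stronger conclusion that $u_1$ and $u_2$ differ by a nontrivial \emph{translation} $T_c$ along $L$. Equivariance of both maps then forces $\rho(\gamma)$ to commute with $T_c$ on the image of $u_1$, which in turn forces $\rho(\gamma)$ to fix the two ideal endpoints of $L$ \emph{pointwise} (not merely as a set), for every $\gamma$. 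That is precisely degeneracy of type~(2) in Definition~\ref{def:deg}, contradicting non-degeneracy. So the invariant-geodesic case is ruled out, but through the translation structure rather than through Lemma~\ref{eqivuse} alone; you should either add the commutation argument or cite the fact that the two harmonic maps differ by a translation before appealing to non-degeneracy.

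Two smaller points. First, your claim that $L$ is preserved setwise by every $\rho(\gamma)$ is not automatic from $u_1(\mathbb{H}^2)\subset L$ alone (the image need not be all of $L$); it requires the observation that the image contains a nondegenerate segment, which (using equivariance and that the conjugated translation has axis $\rho(\gamma)^{-1}L$) forces $\rho(\gamma)L = L$. Second, the flat-strip / strict-negative-curvature remark you make to justify that the images lie on a single geodesic is correct and is exactly what the cited Schoen--Yau rigidity statement uses; it is fine to flag it, but it is subsumed in the citation.
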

\begin{proof}
Let $u_1,u_2$ be two $\rho$-equivariant harmonic maps at a bounded distance from $u_0$. Define $\Psi:\HH^2 \to \mathbb{R}_{\geq 0}$ by $\Psi(x)=d(u_1(x),u_2(x))$. Then by Lemma \ref{sy} (see the remark following it), we know that $\Psi$ is a bounded subharmonic function on $\mathbb{H}^2$. Moreover, by  the equivariance property, $\Psi$  descends to a bounded subharmonic function $\psi: \hat{X} \to \mathbb{R}_{\geq 0}$. As the punctured Riemann surface $\hat{X}$ is parabolic in the potential theoretic sense, this function is constant. Then the argument in \cite[Chapter 11.2]{SchoenYau97} (see also \cite[Lemma 3.11]{Sagman}) implies that either $u_1=u_2$ or $u_1$ and $u_2$ have an image in a geodesic and differ by a translation along that geodesic. However, the latter case can only occur if the image of $\rho$ stabilizes a geodesic; this is ruled out by the fact that the framed representation $\hat{\rho}$ is non-degenerate which implies that $\rho$ does not have a global fixed point in the ideal boundary of $\mathbb{H}^3$. Hence, $u_{\infty}$ is unique.
\end{proof}

As a consequence, we can now prove: 
\begin{lem}\label{lem:conv}
The harmonic map heat flow $u_t$ converges to $u_\infty$.
\end{lem}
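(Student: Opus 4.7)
The plan is to upgrade the subsequential convergence of Lemma \ref{lem:subseq} to full convergence via a standard contradiction argument that exploits the uniqueness result (Lemma \ref{uniqueness}). The essential ingredient is that any divergent time sequence produces a subsequence whose limit is a $\rho$-equivariant harmonic map at bounded distance from $u_0$, which must therefore coincide with $u_\infty$.

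More precisely, suppose for contradiction that $u_t$ does not converge to $u_\infty$ uniformly on compact subsets of $\HH^2$ (together with derivatives up to order two). Then there exist a compact set $K \subset \HH^2$, a constant $\epsilon > 0$, and a sequence $s_n \to \infty$ such that
\[
\sup_{x \in K} d(u_{s_n}(x), u_\infty(x)) + \|\nabla u_{s_n} - \nabla u_\infty\|_{C^1(K)} \geq \epsilon
\]
for all $n$. Now repeat the Arzel\`a--Ascoli/Schauder argument of Lemma \ref{lem:subseq} for the sequence $\{u_{s_n}\}$. The uniform energy density bound (Lemma \ref{ebound}) furnishes equicontinuity on $K$, while Proposition \ref{pmp} guarantees pointwise boundedness at any fixed basepoint $p$ (since $d(u_{s_n}(p), u_0(p)) \leq M$). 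Hence, after extracting a subsequence and invoking Schauder estimates, $u_{s_{n_k}} \to u_\ast$ in $C^2$ on compact subsets of $\HH^2$, and the Eells--Sampson convexity computation reproduced in Lemma \ref{lem:subseq} shows that $u_\ast$ is harmonic. By Lemma \ref{lem:equiv}, $u_\ast$ is $\rho$-equivariant.

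Next, the uniform distance bound of Proposition \ref{pmp}, being independent of time, passes to the limit and yields $d(u_\ast(x), u_0(x)) \leq M$ for all $x \in \HH^2$. Thus $u_\ast$ is a $\rho$-equivariant harmonic map at bounded distance from $u_0$. By the uniqueness statement of Lemma \ref{uniqueness}, $u_\ast = u_\infty$, contradicting the lower bound $\epsilon$ above. Hence $u_t \to u_\infty$ in $C^2$ uniformly on compact subsets of $\HH^2$, which is the desired convergence.

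The argument is essentially soft and presents no real obstacle, since all the substantive work has been done upstream: the long-time existence (Theorem \ref{wang}), the uniform energy density and tension field bounds (Lemma \ref{ebound} and the subsequent remark), the one-point confinement (Lemma \ref{newm}), the uniform distance estimate (Proposition \ref{pmp}), and most crucially the uniqueness of the equivariant harmonic map at bounded distance from $u_0$ (Lemma \ref{uniqueness}). The only mild subtlety is ensuring that the subsequential limit $u_\ast$ lies in the class where uniqueness applies, which is precisely what the time-uniform nature of Proposition \ref{pmp} guarantees.
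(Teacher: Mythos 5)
Your proof is correct, and it takes a slightly more streamlined route than the paper's. Both arguments ultimately reduce to compactness plus uniqueness: any subsequential limit of the flow is a $\rho$-equivariant harmonic map at bounded distance from $u_0$, hence equal to $u_\infty$ by Lemma~\ref{uniqueness}. The paper gets there by first applying the parabolic maximum principle one more time — setting $\psi(x,t)=d(u_\infty(x),u_t(x))$, noting that this is a subsolution (Lemma~\ref{sy}) with $\psi(\cdot,0)\leq M$ (Corollary~\ref{cor:asymp}) and at most linear growth in $t$, and concluding $d(u_\infty(x),u_t(x))\leq M$ for all $t$ via Proposition~\ref{mpr} — before invoking uniqueness. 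You bypass this intermediate step entirely: since Proposition~\ref{pmp} already gives the time-uniform bound $d(u_0(x),u_t(x))\leq M$, you simply run the Arzel\`a--Ascoli/Schauder/Eells--Sampson machinery of Lemma~\ref{lem:subseq} on an arbitrary diverging sequence $s_n$, observe the resulting limit $u_\ast$ is at bounded distance from $u_0$, and conclude $u_\ast = u_\infty$ by Lemma~\ref{uniqueness}, contradicting the choice of $s_n$. Your version is more explicit about the contradiction structure and avoids re-deriving a distance bound that is never really used beyond what Proposition~\ref{pmp} already provides; the paper's version makes the bound on $d(u_\infty,u_t)$ explicit, which is perhaps of independent interest. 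One small point worth flagging in either treatment: the Eells--Sampson argument establishing $\lim_{t\to\infty}\frac{d}{dt}E_U(u_t)=0$ (from convexity and boundedness below of $E_U$) applies to \emph{any} diverging time sequence, not just the one chosen in Lemma~\ref{lem:subseq}, which is exactly what you need and correctly invoke.
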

\begin{proof}
Let  $\psi(x,t)=d(u_{\infty}(x),u_t(x))$ where $u_\infty$ is the harmonic map obtained as a  subsequential limit earlier in this section. Since $u_\infty$ is a stationary solution of the harmonic map heat flow, we obtain from Lemma \ref{sy} that  $\psi$ is a subsolution of the heat equation. From Corollary \ref{cor:asymp} we also have the uniform bound $\psi(x,0)\leq M$ for all $x\in \HH^2$. Note that \[\psi(x,t)\leq d(u_{\infty}(x),u_0(x))+d(u_0(x),u_t(x))\leq  M+ \int\limits_{0}^t \left\vert \frac{\partial u_t}{\partial t}  \right\vert ds = M+  \int\limits_{0}^t \lvert \tau (u_s) \rvert ds \leq M+Dt,\] for some $C,D>0$, where last inequality uses the uniform-boundedness of the norm of the tension field (see the remark following the proof of Lemma \ref{ebound}).

Thus, for any $T>0$, \[\int_{0}^{T}\int_{\mathbb{C}} e^{-cr^2(x)}\psi^2(x,t)dxdt\leq\int_{0}^{T}\int_{\mathbb{C}} e^{-cr^2(x)}(C+Dt)^2dxdt<\infty\]for  some $c>0$. 

We can then apply the prabolic maximum principle (Proposition \ref{mpr}) to conclude that the uniform distance bound $d(u_{\infty}(x),u_t(x))\leq M$ holds for all $t$, not just along a subsequence. 
Thus any subsequential limit along the flow must be a uniformly bounded distance from $u_0$, and hence by Lemma \ref{uniqueness}, equal $u_\infty$. Hence the harmonic map heat flow converges, and the limiting harmonic map is $u_\infty$. 	
\end{proof}

\subsection{Uniqueness and the proof of Theorem \ref{thm:main}}

The preceding sections have completed the existence statement in Theorem \ref{thm:main}. 
The uniqueness statement of Theorem \ref{thm:main} follows exactly as in \cite{GS1}; in what follows we shall refer to the relevant Lemmas in that paper. 

First, recall that in the construction of the initial map in \S3.1, we started with a $\rho_0$-equivariant harmonic map $h:\mathbb{H}^2 \to \HH^2$; by Theorem \ref{thm:all}, we can prescribe the principal parts at the punctures, as long as they are compatible with the corresponding ends of $X_0= \mathbb{H}^2/\Gamma_0$ where $\Gamma_0$ is the image of the Fuchsian representation $\rho_0$, where compatible is in the sense defined in Definition \ref{defn:comp}.

\vspace{.05in}

Let $\mathbb{P}$ be the prescribed principal part at a puncture of $\hat{X}$, that is the principal part of $\text{Hopf}(h)$ at that puncture, with respect to a coordinate neighborhood $U \cong \mathbb{D}^\ast$. We shall show that it is also the principal part of $\text{Hopf}(u)$ where $u= u_\infty$ is the harmonic map obtained above (see Lemma \ref{uniqueness}). 
For this, it suffices to consider the restrictions of the harmonic maps to the universal cover $\widetilde{U}$ which is conformally the upper half-plane. Note that the restrictions $h\vert_{\widetilde{U}}$ and $h\vert_{\widetilde{U}}$ are $\mathbb{Z}$-equivariant asymptotic to chains of geodesics (see Definition \ref{defn:chain}) that we denote by $C_0$ and $C$. 
\vspace{.05in}

We need the following version of \cite[Lemma 4.2]{GS1}. Here, we consider $\mathbb{Z}$-equivariant harmonic maps to chains such that the quotient Hopf differential has a pole of order at least $3$ (see Lemma \ref{map2chain}). 

\begin{lem}\label{lem:42}
    Suppose $U \cong \mathbb{D}^\ast$ and  $h,u:\widetilde{U} \to \HH^3$ are $\mathbb{Z}$-equivariant harmonic maps that are asymptotic to chains $C_0$ and $C$ respectively, and are normalized such that the images along a fixed geodesic towards $\infty$ in $\widetilde{U}$ is asymptotic to the same ideal point. Let $\mathbb{P}$ and $\mathbb{P}^\prime$ be the principal parts of the quotient Hopf differentials defined on $U \cong \mathbb{D}^\ast$, that have poles of order $n\geq 3$. Then there is a uniform distance bound 
\begin{equation}\label{hub}
    \lvert d(h(x), p_0) - d(u(x), p_0) \rvert < D \text{ for all }x\in F_0
\end{equation}
where $F_0$ is a choice of fundamental domain for the $\mathbb{Z}$-action on $\widetilde{U}$, if and only $\mathbb{P} =\mathbb{P}^\prime$. (Here, $p_0$ is a  basepoint in $\HH^3$, and the constant $D>0$ is independent of $x$.) 
\end{lem}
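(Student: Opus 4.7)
The plan is to adapt the proof of \cite[Lemma 4.2]{GS1}, which treats the non-equivariant case of harmonic maps from $\mathbb{C}$ to $\HH^3$ with polynomial Hopf differentials, to the present $\mathbb{Z}$-equivariant setting on the punctured disk. The two key geometric inputs are Proposition \ref{prop:est} (the collapsing-projection estimate for harmonic maps with quadratic differential) and Lemma \ref{map2chain} (the asymptotic chain description). I fix a fundamental domain $F_0$ for the $\mathbb{Z}$-action on $\widetilde{U}$ chosen to contain the fixed geodesic appearing in the normalization.

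For the direction $\mathbb{P} = \mathbb{P}^\prime \Rightarrow$ distance bound: by Lemma \ref{map2chain}, $h$ and $u$ are asymptotic to $(n-2)$-chains $C_0$ and $C$, and when $n$ is even their metric residues coincide with $\mathrm{Re}(\mathrm{Res}\,\mathbb{P}) = \mathrm{Re}(\mathrm{Res}\,\mathbb{P}^\prime)$. Since $\sqrt{q} - \sqrt{q^\prime}$ is integrable on $U$, the singular-flat $q$- and $q^\prime$-metrics agree on $U$ up to a bounded perturbation, and so the horizontal/vertical half-plane decompositions near the pole agree asymptotically. The shared ideal endpoint supplied by the normalization, together with the $\mathbb{Z}$-equivariance of both maps, pins down the axis of the common loxodromic generator and forces $C_0$ and $C$ to coincide as $\mathbb{Z}$-invariant subsets of $\HH^3$. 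Proposition \ref{prop:est} then gives $d(h(x),\Pi(x)) = O(e^{-\alpha R})$ and $d(u(x),\Pi(x)) = O(e^{-\alpha R})$ for a common collapsing projection $\Pi$ onto this chain, so the triangle inequality yields $d(h(x),u(x)) \leq D$ uniformly on $F_0$, which implies \eqref{hub}.

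For the converse, assume \eqref{hub} holds on $F_0$. By $\mathbb{Z}$-equivariance, the bound extends to every translate of $F_0$, and combined with Proposition \ref{prop:est} one can read off, on each horizontal half-plane of the $q$-metric, the length of the truncated geodesic segment of the chain to which it maps as well as the position of its ideal endpoint. The normalization identifies a common ideal endpoint, and then the uniform comparison of distances to $p_0$ forces, segment by segment around the puncture, the geodesics of $C_0$ to coincide with those of $C$. In particular the signed sums of truncated lengths agree, so by Definition \ref{defn:resch} and the identity \cite[Lemma 2.28]{GupWild} the residues (hence $\alpha_1$) of $\mathbb{P}$ and $\mathbb{P}^\prime$ coincide.

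The main obstacle will be the extraction of the higher-order coefficients $\alpha_2,\ldots,\alpha_r$ of the principal parts, which are not captured by the metric residue alone. One has to work in the $q$-exponential coordinate near the pole, as in \cite[\S2.5]{allegretti2021stability}, translating the $q$-lengths of the truncated horizontal segments and the rotation angles between successive half-planes in $F_0$ back into the Laurent coefficients of $\sqrt{q}$. Because the fundamental domain contains exactly one orbit of each of the $(n-2)$ horizontal and vertical half-planes, the equivariance makes $F_0$ the correct arena for this computation, and the argument reduces to the same Canoeing-type calculation used in \cite[Proposition 2.29]{GupWild} and \cite{GS1}, with the equivariance supplying the missing closure condition in place of the explicit polynomial normalization used there.
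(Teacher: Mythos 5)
Your forward direction hinges on a claim that does not hold: that the normalization forces $C_0$ and $C$ to coincide as $\mathbb{Z}$-invariant subsets of $\mathbb{H}^3$, so that one may use a common collapsing projection $\Pi$ and conclude $d(h(x),u(x)) \leq D$. In the paper's application, $h$ and $u$ are equivariant with respect to \emph{different} representations (one Fuchsian, the other the given framed $\pslc$-representation), so the two maps are asymptotic to genuinely different chains; the normalization identifies only \emph{one} ideal endpoint, not the whole chain. This is exactly why the lemma is stated in terms of $\lvert d(h(x),p_0) - d(u(x),p_0)\rvert$ rather than $d(h(x),u(x))$: the latter will typically be unbounded even when $\mathbb{P}=\mathbb{P}'$. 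The paper instead compares, cusp by cusp, \emph{how far} corresponding points go into the $i$-th cusp of $C_0$ versus the $i$-th cusp of $C$, and the bounded-difference of these depths comes from comparing side lengths of a polygonal exhaustion in the $q_1$- and $q_2$-metrics via \cite[Lemma 3.7]{GupWild} (this is where the integrability of $\sqrt{q_1}-\sqrt{q_2}$ is used, as you noticed); no common target chain is needed.

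For the converse, you explicitly flag the central gap yourself: recovering the higher-order coefficients $\alpha_2,\ldots,\alpha_r$ of the principal part, which is exactly the content of \cite[Lemma 3.8]{GupWild} and is not closed by the "Canoeing-type calculation" you gesture at. The paper runs the contrapositive: if $\mathbb{P}\neq\mathbb{P}'$, \cite[Lemma 3.8]{GupWild} produces a sequence $z_i \to$ puncture whose horizontal distances from a basepoint in the $q_1$- and $q_2$-metrics have unbounded difference, and Proposition~\ref{prop:est} then translates this into an unbounded difference $\lvert d(h(\tilde z_i),p_0)-d(u(\tilde z_i),p_0)\rvert$, contradicting \eqref{hub}. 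Your proposal also repeats the erroneous claim that the distance bound forces the geodesics of $C_0$ to coincide with those of $C$, and the assertion that the bound extends to all $\mathbb{Z}$-translates of $F_0$ "by equivariance" is also suspect when the two target $\mathbb{Z}$-actions are generated by different loxodromics. So while the high-level plan (adapt \cite[Lemma 4.2]{GS1}, use Proposition~\ref{prop:est} and Lemma~\ref{map2chain}) is right, both directions as written have real gaps.
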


\begin{proof}
The proof is essentially identical to the proof of \cite[Proposition 3.9]{GupWild} (see also \cite[Lemma 4.2]{GS1}), and as in that paper, relies on the estimates in \cite[Lemmas 3.7,3.8]{GupWild}.  Note that we can always choose such a normalization of $h,u$ by pre-composing with an appropriate conformal automorphism of the domain (namely, a lift of a rotation of $U$). 

Let $q_1,q_2$ be the meromorphic quadratic differentials on $U\cong \mathbb{D}^\ast$ obtained as a quotient of the $\mathbb{Z}$-invariant Hopf differentials $\text{Hopf}(h), \text{Hopf}(u)$ respectively on $\widetilde{U}$.

In one direction, assume that the two principal parts $\mathbb{P}, \mathbb{P}^\prime$ of $q_1, q_2$ are equal. Then, following  the proof of \cite[Proposition 3.9]{GupWild}, we consider an ``polygonal exhaustion" $\{P_j\}_{j\geq 1}$ of $U$ for the $q_1$-metric, that is,  an exhaustion by compact sets bounded by polygons $P_j$, each comprising edges that are alternately segments of leaves of the horizontal and vertical foliations of $q_1$ (see \cite[Definition 2.27]{GupWild}). Note that each $P_j$ has exactly $m = n-2$ such horizontal and $m$ vertical edges. 

By the equality of principal parts and \cite[Lemma 3.7]{GupWild}, the lengths of the  sides of each such polygon $P_j$  in both the $q_1$-metric and the $q_2$-metric differ by a uniformly bounded constant.  (We can choose this exhaustion such that the lengths of the sides of $G_k$ are all $L_k$ up to a uniform additive error, and  $L_k \to \infty$ as $k\to \infty$). 

Consider a polygon $P_j$ in $U$; its lift $\tilde{P}_j $ to fundamental domain $F_0 \subset \widetilde{U}$ is a finite chain of $m$ horizontal and vertical edges. 
There are $m$ corresponding ideal points in  fundamental domains of the $m$-chains $C_0$ and $C$, respectively.

By the normalization, we can assume that $h$ (respectively $u$) take the $i$-th vertical side (where $1\leq i\leq m$) of $\tilde{P}_j$ into the $i$-th ``cusp" in the fundamental domain of the $\mathbb{Z}$-action on  $C$. Moreover, by Proposition \ref{prop:est}, and an argument as in the Claim in the proof of \cite[Proposition 3.9]{GupWild}, the $i$-th side will be a distance $L_k + O(1)$ into the $i$-th cusp, for both maps. Choosing a basepoint $p_0 \in \HH^3$, \eqref{hub} follows, namely the difference of distances from $p_0$ is uniformly bounded. 

Conversely, if $\mathbb{P} \neq \mathbb{P}^\prime$, then by \cite[Lemma 3.8]{GupWild} there is a sequence of points $z_i \in U$ tending to the puncture, such that the horizontal distances $H_i$ and $H_i^\prime$ with respect to the $q_1$-metric and the $q_2$-metric  respectively, of $z_i$ from a fixed basepoint $z_0\in U$, have an unbounded difference, i.e. $\lvert H_i - H_i^\prime \rvert \to \infty$ as $i\to \infty$. By passing to a subsequence, one can assume that this sequence of points lie in the same vertical half-plane in both metrics. Then the images of the lifts  $\tilde{z_i} \subset F_0$ are mapped by $h$ towards a fixed cusp of $C_0$,  and their images under $u$ tend to a fixed cusp  of $C$.  By the previous estimates, the distances of these images from a fixed point $p_0\in \HH^3$ are of the order of the horizontal distances $H_i$ and $H_i^\prime$ respectively, and thus have an unbounded difference, $\lvert d(h(z_i), p_0) - d(u(z_i), p_0)\rvert \to \infty$ as $i\to \infty$ contradicting \eqref{hub}. 
\end{proof}

\noindent We can now formally complete the proof of our main Theorem: 

\begin{proof}[Proof of Theorem \ref{thm:main}]
As described in \S3.1, given the non-degenerate framed representation $\hat{\rho} = (\rho,\beta)$, and a choice of ideal triangulation $T$ of $\sm$, there is a (framed) Fuchsian representation $(\rho_0, \beta_0)$ that corresponds to a crowned hyperbolic surface $X_0$.   This Fuchsian representation is obtained by replacing the Fock-Goncharov coordinates of $\hat{\rho}$ by their absolute values.  By Theorem \ref{thm:all}, fixing a choice of compatible principal parts at the punctures, there is a  (unique) $\rho_0$-equivariant harmonic map $h:\HH^2 \to \HH^2$ where the domain is the universal cover of the punctured Riemann surface $\hat{X}$, and the target is identified with a totally geodesic plane $P$ in $\HH^3$. This map is in fact a diffeomorphism from the universal cover (the domain) to a convex subset of $P$ corresponding to the lift of $X_0$. 

From the geometric interpretation  of the Fock-Goncharov coordinates (see, for example, \cite[\S2.2-3]{GuptaSu23} for a discussion), the framing $\beta$ is obtained by ``bending" or ``pleating" the totally geodesic plane $P$ along the lifts of the edges of the ideal triangulation $T$.  After such a bending $P$ transforms into a piecewise totally-geodesic ``pleated plane" that is invariant under the image of $\rho$, and asymptotic to $\beta$ -- we denote the harmonic map $h$ post-composed by this bending deformation by $\Xi:\HH^2 \to \HH^3$. Note that since each ``bend" is an elliptic isometry, it moves points a bounded distance (where the bound only depends on the distance from its axis). This ensures that the distance function $\lvert d(h(x), p_0) - d(\Xi(x), p_0)\rvert$ is uniformly bounded for all $x\in F_0$, where $F_0$ is the fundamental domain of the $\Pi$-action on $\HH^2$.  From our construction of the initial map $u_0$ in \S3.1, it is a uniform distance away from this pleated plane $\Xi$; thus, by the triangle inequality, the distance function  $\lvert d(h(x), p_0) - d(u_0(x), p_0)\rvert$ is also uniformly bounded for $x\in F_0$. 

The harmonic map heat flow \eqref{heat1} with this initial map $u_0$ then has a long-time solution and converges to a harmonic map $u_\infty$ by Lemma \ref{lem:conv}.

By \eqref{db} in Corollary \ref{cor:asymp} and the distance bound observed above, we then conclude that $\lvert d(h(x), p_0) - d(u_\infty(x), p_0)\rvert$ is uniformly bounded for $x\in F_0$. Lemma \ref{lem:42}  then implies that the principal parts of $\text{Hopf}(h)$ and $\text{Hopf}(u_\infty)$ are identical at any pole of order at least three. For poles of order at most two, which we recall from Corollary \ref{cor:asymp} correspond to the cusps and boundary components of $X$, we apply \cite[Proposition 5.5]{Sagman} to conclude that the principal parts (i.e. leading coefficients) of  $\text{Hopf}(h)$ and $\text{Hopf}(u_\infty)$ at such punctures are identical.

This completes the proof of the existence of a harmonic map $u:=u_\infty$ with the desired asymptotics and principal parts. We now prove the second statement of Theorem \ref{thm:main}, concerning uniqueness. 

For this, if we assume that there are two such harmonic maps $u_1,u_2$ asymptotic to the same framing $\beta$, with the same principal parts, then we can show that the subharmonic  distance function $\Psi (x) = d(u_1(x), u_2(x))$ descends to a bounded function $\psi$ on $\hat{X}$ as follows:

It suffices to show that the distance function $\psi$ is bounded on neighborhoods of each pole, as their complement would be compact. For a neighborhood $U$ of a pole of order at most two, the boundedness of $\psi\vert_U$ is a consequence of  \cite[Lemma 4.8]{allegretti2021stability} and \cite[Lemma 5.9]{Sagman}. On  a neighborhood $U$ of a higher-order pole, the function $\psi$ is bounded by the following argument:  if the points $h(x), u(x)$ for $x\in \widetilde{U}$ both lie in a neighborhood of the same ideal point of the same chain, then from the geometry of such a cusp, $\Psi\vert_{\widetilde{U}} = d(h(x), u(x)) \approx  \lvert d(h(x), p_0) - d(u(x), p_0) \rvert$, that is, the two quantities are equal up to an additive error independent of $x$. By Lemma \ref{lem:42}, the latter distance function is bounded, and thus so is $\Psi\vert_{\widetilde{U}}$ and the quotient function $\psi\vert_U$.

Since $\psi$ is a bounded subharmonic function on a punctured Riemann surface (which is parabolic in the potential theoretic sense), it is constant, and in fact identically zero, i.e. $u_1=u_2$, by the same argument as in the proof of Lemma \ref{uniqueness}. This completes the proof.  \end{proof}

\appendix

\section{Interpolation of conformal metrics} 

In this appendix, we provide some details of the modification of conformal metrics that was alluded to while defining the metric $g$ on the domain in \S3.1 (see Definition \ref{metg}).

These modifications were 
\begin{itemize}
    \item[(a)]in a punctured-disk which is a neighborhood of a pole of order-$1$ of a meromorphic quadratic differential $q$, and 
    \item[(b)] a disk neighborhood of a  zero of order $n\geq 1$ of $q$,
\end{itemize} 
and we handle each in turn, recalling the defining constraints of the inperpolation. 

\subsection{Interpolation near a simple pole}
Choose conformal coordinates such that the neighborhood is $\mathbb{D}^\ast$ and the quadratic differential $q =  \frac{1}{4z}dz^2$. 

\vspace{.05in}

Recalling  the three constraints (a), (c) and (d) of the interpolation at the end of \S3.1.1, it suffices to find, for a choice of a sufficiently small  $\epsilon>0$ (to be determined later), a $C^2$-smooth metric $h$ on $\mathbb{D}^\ast$ (equipped with polar coordinates $(r,\theta)$) having  non-positive Gaussian curvature,  that agrees with the metric of a hyperbolic cusp for $0<r<\epsilon$, and with the flat metric induced by $4q$ in the annulus $2/3 <r <1$.  

\vspace{.05in}

Here, the hyperbolic cusp metric on $\mathbb{D}^\ast$ (where the cusp is at the puncture) is a conformal metric having  conformal factor $F(r) = \frac{1}{r^2\ln^2(r)}$, while the flat $q$-metric has conformal factor $G(r) = \frac{1}{r}$. Since these are rotationally symmetric, we shall assume that the metric $h$ has the form $h(r)|dz|^2$, where  the conformal factor $h: (0,1]\rightarrow (0,\infty)$ is a $C^2$-smooth interpolating function that agrees with $F$ on $(0, \epsilon]$ and with $G$ on $[2/3, 1)$. (See Figure \ref{graphs}.)

\begin{figure}
\begin{center} 
\includegraphics[scale=.6]{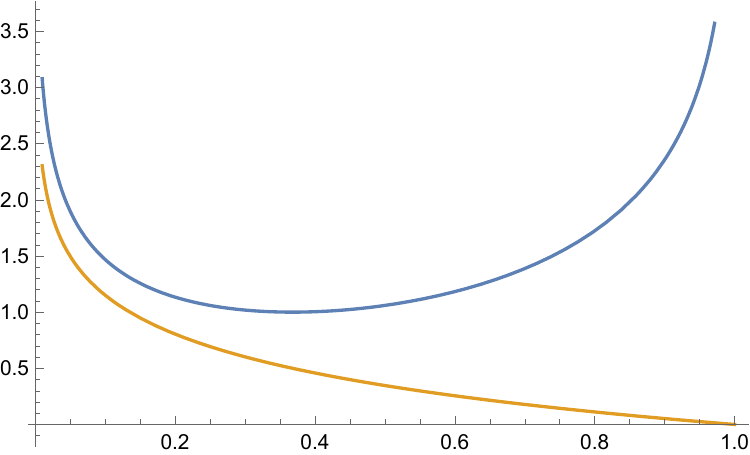}
\caption{The graphs of the functions $f$ (in blue) and $g$ (in yellow) on the interval $(0,1)$. We seek to find a $C^2$-smooth function $u$ that interpolates between $f$ and $g$ on $[\epsilon, 2/3)$. }\label{graphs}
\end{center}
\end{figure} 

\vspace{.05in}

Expressing the conformal metric as $e^{2u(r)}|dz|^2$, where $u(r) := \frac{1}{2} \ln h(r)$, this is equivalent to finding a $C^2$-smooth function $u:(0,1) \to \mathbb{R}^+$ that agrees with $f(r) := \frac{1}{2} \ln F(r)$ on $(0, \epsilon]$ and with $g(r) := \frac{1}{2}\ln G(r)$ on $[2/3, 1)$.  Note that it suffices to find $u:[\epsilon,2/3] \to \mathbb{R}^+$ such that the values and derivatives of $u$ up to second order agree with those of $f$ at $r=\epsilon$, and with those of $g$ at $r= 2/3$. 

\vspace{.05in}

Recall that for the conformal metric $e^{2u(r)}|dz|^2$, the Gaussian curvature is given by
\begin{equation}\label{eq:K}
K(r)= - e^{-2u}\Delta u= -\frac{u''(r)+\frac{1}{r}u'(r)}{e^{2u(r)}}.
\end{equation}
Thus the non-positive curvature condition is satisfied if   $u''(r)+\frac{1}{r}u'(r)\geq 0$ for $r\in (0,1)$.  Since $r$ is positive, this condition is equivalent to checking that 
\begin{equation*}
    k(r) := ru^\prime(r)
\end{equation*} is a monotonically increasing function.  Note that this holds when $u(r) = f(r)$ (when the curvature $K(r) \equiv -1$) and for $u(r) = g(r)$ (when the curvature $K(r) \equiv 0$).

\vspace{.05in}

We shall require that on $(0,\epsilon]$ we have $k(r) = r f^\prime (r) = -1 - \frac{1}{\ln{r}} $, while on $[2/3,1)$ we have $k(r) = rg'(r) = -\frac{1}{2}$ (the constant function). Note that for any sufficiently small $\epsilon>0$ we have the strict inequality $k(\epsilon) = \epsilon f'(\epsilon)< \frac{2}{3}g'(\frac{2}{3}) = k(\frac{2}{3})$, so a monotonically increasing interpolation on $(\epsilon, 2/3)$ is possible. 

\vspace{.05in}

If we choose a non-negative continuous function $v:[\epsilon, 2/3] \to \mathbb{R}_{\geq 0} $ such that 
\begin{itemize}
    \item[(i)] $\int\limits_\epsilon^{2/3} v(s) ds  = k(\frac{2}{3}) - k(\epsilon) >0$,
    \item[(ii)] $v(\epsilon) = k'(\epsilon) = (\epsilon \ln^2\epsilon)^{-1} >0 $, and  \item[(iii)] $v(\frac{2}{3}) = k'(\frac{2}{3})=0$
\end{itemize}
then defining 
\begin{equation}\label{keq}
k(r) := k(\epsilon) + \int\limits_\epsilon^r v(s) ds
\end{equation}
for $r\in [\epsilon, 2/3]$  provide the desired interpolation which is $C^1$-smooth on $(0,1)$.

Such a function  $v$ is easy to find: indeed, we can choose non-negative continuous functions satisfying (ii) and (iii) with the integral $\int\limits_\epsilon^{2/3} v(s) ds  $ taking \textit{any} prescribed value, in particular the one desired in (i). See Figure \ref{func-v} for one way of constructing such a function $v$; in particular, we can make a choice of $v$ such that its support is contained in two small intervals, one of the form $(\epsilon, \epsilon_0)$ and the other of the form $(a,b)$ where $\epsilon_0<a<b<\frac{2}{3}$. As we shall now explain, we also need a \textit{fourth} condition on $v$, for which the above flexibility in choosing $v$ will be useful.
\vspace{.05in}

\begin{figure}[h]
\begin{center} 
\includegraphics[scale=.2]{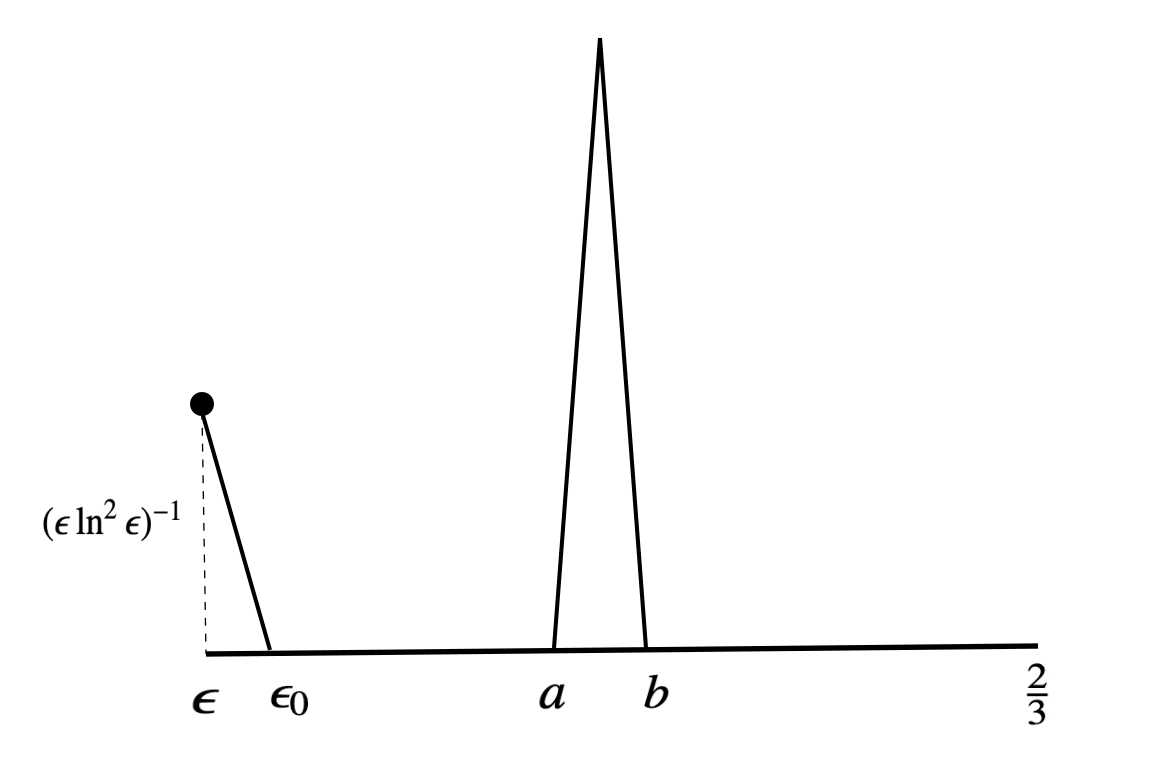}
\caption{The function $v$ on $(\epsilon,2/3)$ can be chosen to have a graph as shown. Here $\epsilon_0,a,b$ can be chosen such that $\epsilon_0 - \epsilon$ and $b-a$ are arbitrarily small.}\label{func-v}
\end{center}
\end{figure}

Recall that our goal is to construct an interpolation $u:(0,1) \to \mathbb{R}^+$. Since $k(r) = r u'(r)$, we can define
\begin{equation}\label{ueq}
u(r) = f(\epsilon) + \int\limits_\epsilon^{r} \frac{k(s)}{s} ds
\end{equation} 
for $r\in [\epsilon, 2/3]$.
From the properties satisfied by $k$, this would automatically satisfy $u(\epsilon) =f(\epsilon)$, $u'(\epsilon) = f'(\epsilon)$, $u'(\frac{2}{3}) = g'(\frac{2}{3})$, $u''(\epsilon) = f''(\epsilon)$ and $u''(\frac{2}{3}) = g''(\frac{2}{3})$. The only additional condition that we need to satisfy is $u(\frac{2}{3}) = g(\frac{2}{3})$, for which we would need $\int\limits_\epsilon^{2/3} \frac{k(s)}{s} ds = g(\frac{2}{3}) - f(\epsilon)$.

Plugging back in \eqref{keq}, we obtain:
$\int\limits_\epsilon^{2/3} \frac{k(s)}{s} ds = k(\epsilon) \ln\frac{2}{3\epsilon} + \int\limits_\epsilon^{2/3} (\int\limits_\epsilon^r v(s)ds)r^{-1} dr$ and hence the fourth condition that $v$ needs to satisfy is:
\begin{itemize}
    \item[(iv)] $\mathcal{I} := \int\limits_\epsilon^{2/3} (\int\limits_\epsilon^r v(s)ds)r^{-1} dr = g(\frac{2}{3}) - f(\epsilon) - k(\epsilon) \ln\frac{2}{3\epsilon} $ 
\end{itemize}
where note that the RHS of the above expression is positive for sufficiently small $\epsilon>0$, since $f(\epsilon) = -\ln{(\epsilon\lvert \ln\epsilon\rvert)}$ and $k(\epsilon) = -1 - \frac{1}{\ln\epsilon} \approx -1$  and $\ln\frac{2}{3\epsilon} \gg  \ln{(\frac{1}{\epsilon\lvert \ln\epsilon\rvert})}$ for sufficiently small $\epsilon$.

\vspace{.05in}

We claim that in fact, we can choose $v$ such that the integral $\mathcal{I}$ on the LHS of (iv) is \textit{any} positive value that is less than $\alpha := (k(\frac{2}{3}) - k(\epsilon))\ln\frac{2}{3\epsilon}$. Here, note that $\alpha$ is an upper bound for $\mathcal{I}$ from condition (i) on $v$, and it is easy to see that the RHS of (iv) is less than $\alpha$.

To see, this, note that for a choice of $v$ with support in $(\epsilon,\epsilon_0) \cup (a,b)$ as in Figure  \ref{func-v}, we have:
\begin{equation}\label{ieq}
    \mathcal{I} \approx \int\limits_{b}^{2/3} \left(k(2/3) - k(\epsilon)\right) r^{-1} dr = \left(k(2/3) - k(\epsilon)\right) \ln \frac{2}{3b}
\end{equation}
where ``$\approx$" denotes an additive error that can be made arbitrarily small by making  the differences $\epsilon_0-\epsilon$ and $b-a$ small. 
By varying the endpoint $b$ in $(\epsilon, 2/3)$, it follows from \eqref{ieq} that $\mathcal{I}$ can be chosen to be any value in the range $(0,\alpha)$.
Hence, requirement (iv) can be satisfied and with this choice of $v$, equations \eqref{keq} and \eqref{ueq} provide the desired interpolating function $u$.

\subsection{Interpolation near a zero}
Choose conformal coordinates such that the neighborhood is $\mathbb{D}$ and the quadratic differential $q = \frac{1}{4} z^ndz^2$ where $n\geq 1$ is the order of the zero of $q$.

\vspace{.05in}

Recalling  the three constraints (b), (c) and (d) of the interpolation at the end of \S3.1.1, we need to find a $C^2$-smooth metric $h$ on $\mathbb{D}$ (equipped with polar coordinates $(r,\theta)$) having  non-positive Gaussian curvature, that agrees with the flat $4q$-metric in a collar neighborhood of $\partial \mathbb{D}$, say in the annulus $A_\epsilon = \{\epsilon< r<1\}$ for some positive $\epsilon$. 

\vspace{.05in}

Consider the conformal metric $f_{\epsilon}(z)|dz|^2$ where the conformal factor $f_{\epsilon}$ is given by
\[f_{\epsilon}(z)=\begin{dcases}
\psi(r) &0<r<\epsilon\\
r^{n}& z\in A_{\epsilon} \\
\end{dcases}\]  
where $\psi(r)=ar^{4{n}}+br^{2{n}}+c$ is a polynomial with coefficients $a,b,c$ that we shall now prescribe. 

\vspace{.05in} 

To  ensure that  $f_{\epsilon}$ is a $C^2$-smooth function, we need to choose $a,b,c$ such that  the derivatives up to order $2$ at $r = \epsilon$ match. To do this, we solve the following linear system:
\begin{align}\label{soe}
\begin{cases}
&\psi(\epsilon)=a\epsilon^{4{n}}+b\epsilon^{2{n}}+c=\epsilon^n,\\
&\psi'(\epsilon)= 4{n}a\epsilon^{4{n}-1}+2nb\epsilon^{2{n}-1}= n\epsilon^{n-1}\\
&\psi''(\epsilon)=4{n}(4{n}-1)a\epsilon^{4n-2}+2{n}(2{n}-1)b\epsilon^{2{n}-2}= n(n-1)\epsilon^{n-2}.
\end{cases}
\end{align}
to obtain:
\[a=-\frac{1}{8n\epsilon^{3n}},b=\frac{3}{4\epsilon^n}, \quad \text{and}\quad c=\left(1+\frac{1}{8{n}}-\frac{3}{4}\right)\epsilon^n.\]
 Note that $a<0$ and $b,c>0$.

 \vspace{.05in}

For the non-positivity of the Gaussian curvature, from \eqref{eq:K} it suffices to show that $\Delta \ln \psi \geq 0$. Note that \[\psi'(r)=4nar^{4n-1}+2nbr^{2n-1}, \ \psi''(r)=4n(4n-1)ar^{4n-2}+2n(2n-1)br^{2n-2},\] which in turn yields,
\begin{align*}
    &\Delta \ln \psi=-\frac{\psi'^2}{\psi^2}+\frac{\psi''}{\psi}+\frac{\psi'}{r\psi} \\
    =&\frac{[4n(4n-1))ar^{4n-2}+2n(2n-1)br^{2n-2}+4anr^{4n-1}+2bnr^{2n-1}]\psi(r)-(4nar^{4n-1}+2nbr^{2n-1})^2}{\psi^2}\\
    =&\frac{4n^2r^{2n-2}(abr^{4n}+4acr^{2n}+bc)}{\psi^2}.
\end{align*}

Thus, it suffices to prove that $p(r) = (abr^{4n}+4acr^{2n}+bc)\geq 0$.  From basic calculus it is easy to check that $p(r)$ attains its minimum at a point $r=\alpha$ where $\alpha^{2n}=-\frac{4ac}{2ab}$. Therefore \[p(r)\geq p(\alpha)=ab\left(\frac{-4ac}{2ab}\right)^2-4ac\frac{4ac}{2ab}+bc=\frac{-(4ac)^2}{4ab}+bc\geq0,\] since recall that $a<0$ and $b,c>0$. This completes the proof.

\bibliographystyle{amsalpha}
\bibliography{thesis_ref}

\end{document}